\theoremstyle{definition}
\def\fnum{equation} 
\newtheorem{Thm}[\fnum]{Theorem}
\newtheorem{thm}[\fnum]{Theorem}
\newtheorem{cor}[\fnum]{Corollary}
\newtheorem{Que}[\fnum]{Question}
\newtheorem{Lem}[\fnum]{Lemma}
\newtheorem{lem}[\fnum]{Lemma}
\newtheorem{defn}[\fnum]{Definition}
\newtheorem{Rem}[\fnum]{Remark}
\newtheorem{rem}[\fnum]{Remark}
\newtheorem{Pro}[\fnum]{Proposition}
\newtheorem{prop}[\fnum]{Proposition}
\numberwithin{equation}{section}
\newcommand{\gexp}{{\text {gexp}}}
\newcommand{\diam}{{\text {diam}}}
 \newcommand{\N}{\ensuremath{\mathbb{N}}}
  \newcommand{\R}{\ensuremath{\mathbb{R}}}
 \newcommand{\Z}{\ensuremath{\mathbb{Z}}}
 \newcommand{\RP}{\ensuremath{\mathbb{RP}}}
 \newcommand{\ba}{\begin{align*}}
 \newcommand{\ea}{\end{align*}}
\newcommand{\Rad}{\text{Rad}}
\newcommand{\Id}{\text{Id}}
\def\RP{{\bold{RP}}}
\def\SS{{\mathbb S}}
\newcommand{\co}{{\text {c}}}
\newcommand{\kang}{\tilde{\measuredangle}^{\kappa}}
\newcommand{\kgexp}{\text{gexp}^{\kappa}_p}
\newcommand{\eps}{\varepsilon}
\DeclareMathOperator{\RCD}{RCD}
\newcommand{\dd}{\mathop{}\!\mathrm{d}}
\newcommand{\mm}{\mathfrak{m}}
\newcommand{\kcone}{C_{\kappa}}
\newcommand{\curv}{\operatorname{curv}}
\DeclareMathOperator{\vol}{vol}
\DeclareMathOperator{\itan}{itan}
\DeclareMathOperator{\itank}{itan_\kappa}
\DeclareMathOperator{\tank}{\tan_\kappa}
\DeclareMathOperator{\snk}{sn_\kappa}
\DeclareMathOperator{\csk}{cs_\kappa}
\DeclareMathOperator{\Intr}{Int}
\newcommand{\pik}{\pi_\kappa}
\def\co{\colon\thinspace}
\DeclareMathOperator\Int{Int}
\begin{document}
\title{Sharp bounds and rigidity for volumes of boundaries of Alexandrov spaces}

\author{
Qin Deng}
\address{Qin Deng, Massachusetts Institute of Technology, \url{qindeng@mit.edu}}
\author{
 Vitali Kapovitch
 } \thanks{V.K. is partially supported by a Discovery grant from NSERC }
\address {Vitali Kapovitch, University of Toronto, \url{vtk@math.toronto.edu}} 
\maketitle

\begin{abstract}
We obtain sharp volume bounds on the boundaries of Alexandrov spaces with given lower curvature bound, dimension, and radius. We also completely classify the rigidity case and analyze almost rigidity. Our results are new even for smooth manifolds with boundary.
\end{abstract}

\tableofcontents

\section{Introduction}

Let $(X^m, \dd)$ be an Alexandrov space with curvature bounded below by some $\kappa \in \R$ and dimension $m$.  Recall that the radius $\Rad(X)$ of $X$ is defined as
\begin{equation*}
    \Rad(X) = \inf_{x \in X} \sup_{y \in X} \dd(x,y).
\end{equation*}
We denote by $M^{m}(\kappa)$ the $m$-dimensional model space with constant curvature $\kappa$ and $\pik$ the diameter of $M^m(\kappa)$, i.e., $\pik = \pi/\sqrt{\kappa}$ if $\kappa > 0$ and $\infty$ otherwise.

Our first result is a sharp upper bound on the volume of the boundary of $X$ of radius $R$.
\begin{Thm}\label{thm-vol-bound}
Let $X^m$ be Alexandrov of $\curv\ge\kappa$. Let $R > 0$ and assume in addition that $R \leq \pik/2$ if $\kappa > 0$. If $\overline{B_R(p)}=X$ for some $p \in X$ then 
\begin{equation}
    \mathcal{H}^{m-1}(\partial X) \le \mathcal{H}^{m-1}(\partial B_{R}^{\kappa}),
\end{equation}
where $B_{R}^{\kappa}$ is the ball of radius $R$ in $M^{m}(\kappa)$ (we drop the dependence on $m$ for simplicity since we will fix dimension).
Furthermore, if $\overline{B_R(p)}=X$  for some  $p\in\partial X$ then

\begin{equation}\label{eq-upper-br}
    \mathcal{H}^{m-1}(\partial X) \le \mathcal{H}^{m-1}(\partial B_{R,+}^{\kappa} ),
\end{equation}
where $B_{R,+}^{\kappa}$ is the  half-ball  of radius $R$ in  $M^{m}(\kappa)$, i.e., it is the ball of radius $R$ around a boundary point in  the half space $M^{m}_+(\kappa)$ in  $M^{m}(\kappa)$.
\end{Thm}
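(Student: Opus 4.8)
The plan is to estimate $\mathcal{H}^{m-1}(\partial X)$ by integrating over the boundary the radial behavior of geodesics emanating from $p$, and to compare this with the model. Recall that $\partial X$ is itself an Alexandrov space of $\curv \ge \kappa$ (of dimension $m-1$), and more importantly that $X$ satisfies a Bishop--Gromov-type inequality in the form of a concavity property for the volume-density of metric spheres. Concretely, I would use the gradient-exponential map $\gexp_p$ and the coarea formula: writing, for a.e.\ $r$, the sphere $\partial B_r(p)$ with its induced $\mathcal{H}^{m-1}$-measure, one has that the function $r \mapsto \mathcal{H}^{m-1}(\partial B_r(p))/\snk(r)^{m-1}$ is nonincreasing (the usual monotonicity underlying Bishop--Gromov). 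The key point is that $\partial X$ sits inside $X$ and, by the first variation / the structure of the boundary, $\partial X$ is contained in the union of the spheres $\partial B_r(p)$; in fact the distance function $r = \dd(p,\cdot)$ restricted to $\partial X$ has gradient of norm $\le 1$, so the coarea formula on the $(m-1)$-dimensional space $\partial X$ gives
\begin{equation*}
    \mathcal{H}^{m-1}(\partial X) \le \int_0^R \mathcal{H}^{m-2}\big(\partial X \cap \partial B_r(p)\big)\, \dd r,
\end{equation*}
after which one must bound the integrand by the corresponding model quantity.

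For the first statement ($p$ interior), the right comparison is: $\partial X \cap \partial B_r(p)$ is a hypersurface in $\partial B_r(p)$, and one wants $\mathcal{H}^{m-2}(\partial X \cap \partial B_r(p)) \le \mathcal{H}^{m-2}(\partial B^\kappa_r)$, i.e., the boundary slice is no bigger than the whole model sphere of the same radius. This should follow by induction on dimension: $\overline{B_r(p)} \cap X$ is an Alexandrov space of $\curv \ge \kappa$ of radius $\le r$, its boundary decomposes as (part of $\partial B_r(p)$) $\cup$ (part of $\partial X$), and the slice $\partial X \cap \partial B_r(p)$ is the ``edge'' shared by these two faces; a doubling or reflection argument across $\partial X$ reduces the estimate on this edge to the interior case in dimension $m-1$. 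Integrating the resulting pointwise bound and recognizing $\int_0^R \mathcal{H}^{m-2}(\partial B^\kappa_r)\,\dd r = \mathcal{H}^{m-1}(\partial B^\kappa_R)$ (coarea in the model, where it is an equality because the model distance function has unit gradient) yields the claim.

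For the second statement ($p \in \partial X$), the same scheme applies but the model object changes: the relevant slices are now $\partial(B_r(p)) \cap \partial X$ where $p$ itself lies on $\partial X$, so the comparison sphere is replaced by the boundary of the half-ball $B^\kappa_{R,+}$, whose ``lateral'' part is a model hemisphere-type sphere and whose ``flat'' part is a model ball of dimension $m-1$. One splits $\partial X$ accordingly into the portion ``swept radially'' (controlled as before, now by the lateral part of $\partial B^\kappa_{R,+}$) and the portion near $p$ lying in the flat face; the latter is handled by the interior case of the theorem applied in dimension $m-1$ to $\overline{B_R(p)} \cap \partial X$, which is an Alexandrov space of $\curv \ge \kappa$ and radius $\le R$ with $p$ an interior point of it. Adding the two contributions reproduces $\mathcal{H}^{m-1}(\partial B^\kappa_{R,+})$.

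The main obstacle is establishing the pointwise slice inequality $\mathcal{H}^{m-2}(\partial X \cap \partial B_r(p)) \le \mathcal{H}^{m-2}(\partial B^\kappa_r)$ with the correct constant, because the induced metric on the metric sphere $\partial B_r(p)$ is not itself Alexandrov with a clean curvature bound (only the ball is), so one cannot literally recurse on $\partial B_r(p)$. I expect the right fix is to avoid slicing and instead argue directly on $X$: use that the ``radial'' extension map $\partial X \times (0,R] \to X$, $(\xi, r) \mapsto$ point at distance $r$ from $p$ along the geodesic is distance-nonincreasing in a suitable weighted sense (this is exactly Bishop--Gromov applied along the cone over $\partial X$), compare the Jacobian of this map to the model Jacobian $\snk(r)^{m-1}$, and integrate in $r$ from $0$ to $R$; the boundary terms at $r=R$ and the vanishing at $r=0$ then produce precisely $\mathcal{H}^{m-1}(\partial B^\kappa_R)$. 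Making this Jacobian comparison rigorous on an Alexandrov space—where $\gexp_p$ is only defined a.e.\ and the relevant ``cut locus'' must be shown to be $\mathcal{H}^{m-1}$-null on $\partial X$—is the technical heart of the argument, and the condition $R \le \pik/2$ when $\kappa>0$ enters here to guarantee that geodesics from $p$ do not develop conjugate points before radius $R$.
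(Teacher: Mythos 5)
Your scheme has two genuine gaps, and the first is fatal to the slicing step as written. With $r=\dd(p,\cdot)$ restricted to $\partial X$ having gradient of norm $\le 1$, the coarea (Eilenberg) inequality gives $\int_0^R \mathcal{H}^{m-2}\bigl(\partial X\cap \partial B_r(p)\bigr)\,\dd r \le \mathcal{H}^{m-1}(\partial X)$, i.e.\ the \emph{opposite} of your first display; to get your direction you would need the restricted gradient to be $\ge 1$ a.e., which fails precisely where it matters. Indeed your inequality is already false for the model $X=\overline{B_R^{\kappa}}$ itself: there $\partial X$ is contained in the single level set $\{r=R\}$, so every slice with $r<R$ is empty and your right-hand side is $0$ while the left-hand side is $\mathcal{H}^{m-1}(\partial B_R^{\kappa})>0$. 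This also shows the scheme cannot be repaired by fixing constants: in the extremal configuration the boundary volume concentrates on the outer sphere $\{r=R\}$, a set of $\dd r$-measure zero, so no fibration of $\partial X$ over the radius can detect the sharp bound. Your fallback (``radial extension map'' plus a Jacobian/Bishop--Gromov comparison) is left as a sketch whose technical heart you explicitly do not supply, and it is in any case aimed at an $m$-dimensional volume comparison rather than an $(m-1)$-dimensional bound on $\partial X$; likewise the pointwise slice inequality $\mathcal{H}^{m-2}(\partial X\cap\partial B_r(p))\le \mathcal{H}^{m-2}(\partial B_r^{\kappa})$ is asserted but not proved, as you acknowledge.

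The missing idea is to run the comparison in the other direction: to bound $\mathcal{H}^{m-1}(\partial X)$ from above one should exhibit $\partial X$ as the \emph{image} of a set of controlled measure under a $1$-Lipschitz map, rather than map or slice $\partial X$ itself. The paper proves that $\partial X\subset \kgexp\bigl(\partial B_R(o_p)\bigr)$, where $\kgexp$ is the gradient exponential map on $T_pX$ with the $\kappa$-cone metric; the surjectivity onto $\partial X$ is obtained by flowing along gradient curves of the semiconcave function $\itank(\dd(\cdot,p))$, which stay in $\partial X$ by extremality, together with an intermediate value argument (with an extra case when $p\in\partial X$, where the flat face $C_\kappa(\partial\Sigma_p)\cap \overline{B_R}$ of $\partial B_R^{\kappa}(o_p)$ enters). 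Since $\kgexp$ is $1$-Lipschitz on $B^{\kappa}_{\pik/2}(o_p)$, this gives $\mathcal{H}^{m-1}(\partial X)\le \mathcal{H}^{m-1}(\partial B_R^{\kappa}(o_p))$, and the theorem then follows from the maximal volume bound $\mathcal{H}^{m-1}(\Sigma_p)\le\mathcal{H}^{m-1}(\SS^{m-1})$ (respectively its version for spaces with boundary when $p\in\partial X$, giving the half-ball model). None of these ingredients appear in your proposal, so as it stands the argument does not establish the theorem.
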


\begin{Rem}
Note that  $\mathcal{H}^{m-1}(\partial B_{R,+}^{\kappa}) < \mathcal{H}^{m-1}(\partial B_{R}^{\kappa}) $ unless $\kappa>0$ and $R=\pik/2$ in which case they are equal. Hence, except for that case the upper bound given by Theorem \ref{thm-vol-bound} is stronger if $p\in \partial X$.

\end{Rem}
\begin{Rem}The bound given by Theorem~\ref{thm-vol-bound} is new even in the smooth setting, i.e., when $X=(M^m,g)$ is a compact smooth Riemannian manifold of $\sec\ge \kappa$ with locally convex boundary.
\end{Rem}

In the special case $\kappa>0, R=\pik/2$, the estimate given by Theorem \ref{thm-vol-bound} follows from Petrunin's solution to Lytchak's problem, who proved in \cite{petrun-semiconcave}[Section 3.3.5]  that if $X^m$ is Alexandrov of $\curv\ge 1$ then $\mathcal{H}^{m-1}(\partial X)\le \vol_{m-1} \SS^{m-1}$. Note that if $X^m$ is Alexandrov of $\curv\ge 1$ with nonempty boundary and $p\in X$ is the soul of $X$ then $X=B_{\pi/2}(p)$, see \cite{petrun-semiconcave}[Section 3.3.5] or \cite[Section 6.3]{Per1}. Therefore Petrunin's result gives the sharp upper bound in Theorem~\ref{thm-vol-bound} for $\kappa>0, R=\pik/2$. However, Petrunin's proof does not generalize to other values of $R$  and $\kappa$.

Next, let us note that an implicit bound on the volume of the boundary in this theorem follows from a result of Fujioka \cite[Theorem 6.5] {Fuj-uniform-bounds-extr}, where an upper bound is obtained on the volume of any extremal subset of $X$ in terms of dimension, lower curvature and upper diameter bounds of $X$.  Fujioka's result also gives an implicit local bound under the assumptions of Theorem \ref{thm-vol-bound-local} below.

As was pointed out to the authors by Daniele Semola, an implicit upper bound on $\mathcal{H}^{m-1}(\partial X)$ under the assumptions of Theorems \ref{thm-vol-bound} and \ref{thm-vol-bound-local} also follows from work of Li and Naber \cite{Li-Naber}. Since this is not mentioned explicitly in their paper let us elaborate on this point. In \cite{Li-Naber}[Corollary 1.4] it is shown that for any integer $m$ and  $\eps>0,\kappa\in \R$, there is $C=C(m,\eps,\kappa)>0$ such that if $X^m$ Alexandrov space of $\curv\ge \kappa$ and $p\in X$ then

\begin{equation}\label{eq-li-naber}
\mathcal H^k(\mathcal S_\eps^k(X)\cap B_1(p))\le C.
\end{equation}
Here $\mathcal S_\eps^k(X)$ stands for the $(k, \eps)$ quantitative stratified set. See \cite{Li-Naber} for the definition. For any small $\eps<\eps_0(m,\kappa)$  it is immediate from the definition that if $k=m-1$ and $p\notin \mathcal S_\eps^{m-1}(X)$ then $p$ is $(m,\eps)$-strained and hence it cannot be a boundary point by \cite{BGP}. Therefore $\partial X\subset \mathcal S_\eps^{m-1}(X)$ for any $\eps<\eps_0$ and so \eqref{eq-li-naber} gives a uniform bound  $\mathcal H^{m-1}(\partial X\cap B_1(p))\le C(m, \kappa)$. See also \cite{BNS22} for implicit bounds in the setting of spaces with Ricci curvature bounded below under a further quantitatively noncollapsed assumption. 

 More generally we prove the following bound which is also sharp.
 \begin{Thm}\label{thm-tancone-vol-bound}
 Let $X^m$ be Alexandrov of $\curv\ge\kappa$. Let $R > 0$ and assume in addition that $R < \pik/2$ if $\kappa > 0$. If $\overline{B_R(p)}=X$ for some $p \in X$ then
 \begin{equation}\label{thm-vol-bound-2}
    \mathcal{H}^{m-1}(\partial X) \le \mathcal{H}^{m-1}(\partial B_{R}^{\kappa}(o_p)),
\end{equation}
where $B_{R}^{\kappa}(o_p)$ is the ball of radius $R$ in $T_pX$ equipped with the $\kappa$-cone metric (see section \ref{sec:prelim} for the definition).
 \end{Thm}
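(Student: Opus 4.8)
The plan is to deduce the estimate from Petrunin's gradient exponential map. Recall from \cite{petrun-semiconcave} that there is a canonical map $\gexp_p\colon T_pX\to X$ which fixes the apex, $\gexp_p(o_p)=p$; whose radial curves $t\mapsto\gexp_p(t\xi)$, $\xi\in\Sigma_p$, are quasigeodesics of $X$ issuing from $p$ in the direction $\xi$ (so $\dd(p,\gexp_p(t\xi))\le t$); and which, when $T_pX$ carries the $\kappa$-cone metric, is $1$-Lipschitz on $\overline{B_R^\kappa(o_p)}$ as soon as $R<\pik/2$ (with no restriction on $R$ when $\kappa\le 0$) --- this matches exactly the hypothesis of the theorem. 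Using $\overline{B_R(p)}=X$ one checks that the restriction of $\gexp_p$ to $\overline{B_R^\kappa(o_p)}$ is onto $X$. After rescaling we may assume $\kappa\in\{-1,0,1\}$.

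It then suffices to produce an $(m-1)$-rectifiable set $\Gamma\subseteq\overline{B_R^\kappa(o_p)}$ with $\partial X\subseteq\gexp_p(\Gamma)$ and $\mathcal H^{m-1}(\Gamma)\le\mathcal H^{m-1}(\partial B_R^\kappa(o_p))$, since $1$-Lipschitzness then gives $\mathcal H^{m-1}(\partial X)\le\mathcal H^{m-1}(\gexp_p(\Gamma))\le\mathcal H^{m-1}(\Gamma)$. To build $\Gamma$ I would use that $\partial X$ is an extremal subset, and that gradient curves of semiconcave functions --- hence the radial curves of $\gexp_p$, which are built from the gradient flow of $\dd_p$ --- cannot leave an extremal subset once they enter it. Thus for every interior direction $\xi\in\Sigma_p\setminus\partial\Sigma_p$ the set $\{t\in[0,R]:\gexp_p(t\xi)\in\partial X\}$ is either empty or an interval $[\rho(\xi),R]$ with $\rho(\xi)>0$, while for every boundary direction $\eta\in\partial\Sigma_p$ (which occurs only if $p\in\partial X$) the whole radial curve lies in $\partial X$. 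I would then take $\Gamma$ to be the union of the ``first contact graph'' $\{\rho(\xi)\,\xi:\rho(\xi)\le R\}$ over interior directions and the truncated cone $\kcone(\partial\Sigma_p)\cap\overline{B_R^\kappa(o_p)}$ over boundary directions. The second piece maps into $\partial X$ by invariance; for the first, one must check that every $x\in\partial X$ is the first contact point of some radial curve, which again uses the invariance of $\partial X$ together with surjectivity of $\gexp_p$.

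The remaining, and I expect hardest, step is the sharp bound $\mathcal H^{m-1}(\Gamma)\le\mathcal H^{m-1}(\partial B_R^\kappa(o_p))$. The cone piece of $\Gamma$ is isometric to the corresponding piece of $\partial B_R^\kappa(o_p)$, so the content is the estimate $\mathcal H^{m-1}(\{\rho(\xi)\xi\})\le\mathcal H^{m-1}(S_R(o_p))=\snk(R)^{m-1}\mathcal H^{m-1}(\Sigma_p)$ for the first contact graph. Expanding the area of this graph in the $\kappa$-cone metric, $\mathcal H^{m-1}(\{\rho(\xi)\xi\})=\int_{\Sigma_p}\snk(\rho)^{m-2}\sqrt{\snk(\rho)^2+|\nabla\rho|^2}$, so what is needed is a differential inequality forcing the integrand pointwise below $\snk(R)^{m-1}$. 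It should follow from the bound $\rho\le R$ --- which is exactly the hypothesis $\overline{B_R(p)}=X$ --- together with the lower curvature bound and the local convexity of $\partial X$, used to control $|\nabla\rho|$ in terms of $\rho$ via hinge comparison at $p$, and possibly combined with an induction on $m$ (the spaces of directions $\Sigma_x$, $x\in\partial X$, are Alexandrov of $\curv\ge1$ with nonempty boundary $\Sigma_x(\partial X)$, so they are closed metric $(\pi/2)$-balls about their souls and the theorem in dimension $m-1$ applies to them). Assembling the pieces gives $\mathcal H^{m-1}(\partial X)\le\mathcal H^{m-1}(\Gamma)\le\mathcal H^{m-1}(\partial B_R^\kappa(o_p))$, with equality forcing $\rho\equiv R$ and the links $\Sigma_x(\partial X)$ to be round --- the entry point for the rigidity classification.
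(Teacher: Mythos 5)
Your overall strategy (push $\partial X$ forward through the $1$-Lipschitz map $\kgexp$ and compare $\mathcal H^{m-1}$) is the same as the paper's, but the set you exponentiate is different, and that difference is where the proposal breaks down. The paper's key step (Lemma \ref{lem-exp-onto-bry}) is that \emph{every} boundary point is already in $\kgexp(\partial B_R^{\kappa}(o_p))$, i.e.\ it is hit either by a radial curve at parameter exactly $R$ or by a radial curve over a direction in $\partial\Sigma_p$; this is proved by flowing backwards inside $\partial X$ along the gradient flow of $f=\itank(\dd(\cdot,p))$ and running an intermediate value argument on $t\mapsto f(\kgexp(R\,w_t))$. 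Once that containment is known, the theorem is immediate from $1$-Lipschitzness, and no area estimate for any auxiliary set is ever needed. You instead replace the sphere $S_R(o_p)$ by the ``first contact graph'' $\Gamma=\{\rho(\xi)\xi\}$ and must then prove $\mathcal H^{m-1}(\Gamma)\le\mathcal H^{m-1}(\partial B_R^{\kappa}(o_p))$. That inequality is the entire content of the theorem in your formulation, and the argument you sketch for it does not work: the pointwise bound $\snk(\rho)^{m-2}\sqrt{\snk(\rho)^2+|\nabla\rho|^2}\le\snk(R)^{m-1}$ requires a quantitative bound on $|\nabla\rho|$ in terms of $\rho$ and $R$, and nothing in the hypotheses supplies one. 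A radial graph over $\Sigma_p$ with values $\le R$ can have $\mathcal H^{m-1}$ far exceeding that of $S_R(o_p)$ (the region $\{t\xi:t<\rho(\xi)\}$ is only star-shaped, not convex, so no nested-convex-body monotonicity of the type in Theorem \ref{nested-convex} applies), and neither the curvature bound, nor extremality of $\partial X$, nor ``hinge comparison at $p$'' is shown to force the needed gradient control; the appeal to induction on $m$ is likewise only a hope. There is also a regularity gap upstream of this: $\rho$ is in general only lower semicontinuous (because $\{(t,\xi):\kgexp(t\xi)\in\partial X\}$ is closed), so the rectifiability of $\Gamma$ and the area-formula expression you integrate are themselves unjustified.

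The parts of your plan that do work are the soft ones, and they essentially reproduce ingredients of the paper: surjectivity of $\kgexp$ on $\overline{B_R^{\kappa}(o_p)}$, the $1$-Lipschitz bound for $R\le\pik/2$, invariance of the extremal set $\partial X$ under the gradient flow of $f$ (so that $\{t:\kgexp(t\xi)\in\partial X\}$ is a terminal interval), and the identification of the cone piece $\kcone(\partial\Sigma_p)\cap\overline{B_R^{\kappa}(o_p)}$ with part of $\partial B_R^{\kappa}(o_p)$ when $p\in\partial X$. To close the gap you would either have to prove the area comparison for the first-contact graph (which I do not see how to do, and which would in any case be a harder statement than the theorem), or, as in the paper, upgrade the containment: show that each $x\in\partial X$ not over a boundary direction is of the form $\kgexp(R\,w)$ for some $w\in\Sigma_p$, by moving $x$ backwards along the gradient flow of $f$ within $\partial X$ and applying the intermediate value theorem to the $f$-values of the endpoints $\kgexp(R\,w_t)$ of the radial curves through the flowed points. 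That is the step your proposal is missing.
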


The upper bound in Theorem~\ref{thm-vol-bound} holds locally too. 

\begin{Thm}\label{thm-vol-bound-local}
Let $X^m$ be Alexandrov with $\curv\ge \kappa$. Let $R > 0$ and assume in addition that $R \leq \pik/2$ if $\kappa > 0$. Then for any $p\in X$ it holds that
\begin{equation}
    \mathcal{H}^{m-1}(\partial X\cap B_R(p)) \le \mathcal{H}^{m-1}(\partial B_{R}^{\kappa}),
\end{equation}
where $B_{R}^{\kappa}$ is the ball of radius $R$ in $M^{m}(\kappa)$.

If $p\in\partial X$ then
\begin{equation}
    \mathcal{H}^{m-1}(\partial X\cap B_R(p)) \le \mathcal{H}^{m-1}(\partial B_{R,+}^{\kappa} ),
\end{equation}
where $B_{R,+}^{\kappa}$ is the  half-ball  of radius $R$ in  $M^{m}(\kappa)$.
Moreover, in either of these cases the inequalities are strict unless $X=\overline{B_R(p)}$ and $X$ is one of the spaces given by Theorem~\ref{main thm}.
\end{Thm}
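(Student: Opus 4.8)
The inequality reduces to Theorem~\ref{thm-vol-bound} when $X=\overline{B_R(p)}$, so the substance of the statement is the case $X\supsetneq\overline{B_R(p)}$, where one wants the \emph{strict} inequality. The plan is to rerun the proof of Theorem~\ref{thm-tancone-vol-bound} (for $p$ interior) and of its boundary analogue (for $p\in\partial X$) with $X$ replaced by the set $Y:=\overline{B_R(p)}$. This is legitimate even though $Y$, with either its intrinsic or its ambient metric, need not be an Alexandrov space with $\curv\ge\kappa$: the relevant part of that argument is purely \emph{radial}, in that it refers only to minimal geodesics emanating from $p$ and to the space of directions $\Sigma_p$ together with its $\kappa$-cone $T_pX$, and all of these objects coincide for $X$ and for $Y$, since a neighbourhood of $p$ in $Y$ equals one in $X$ and minimal geodesics from $p$ of length $\le R$ stay in $Y$. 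What must be added is the bookkeeping for the part of $\partial Y$ that is \emph{not} part of $\partial X$, namely (a piece of) the distance sphere $\partial B_R(p)$ and the cut locus of $p$: directions $\xi\in\Sigma_p$ whose radial geodesic leaves $\overline{B_R(p)}$, or ceases to be minimal, before meeting $\partial X$ must be shown to contribute nothing to $\mathcal{H}^{m-1}(\partial X\cap B_R(p))$.

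Carrying this out, one first obtains the tangent-cone form of the bound,
\begin{equation*}
\mathcal{H}^{m-1}\big(\partial X\cap B_R(p)\big)\ \le\ \mathcal{H}^{m-1}\big(\partial B_R^\kappa(o_p)\big),
\end{equation*}
where $B_R^\kappa(o_p)\subseteq T_pX$ is the $\kappa$-cone ball, exactly as in Theorem~\ref{thm-tancone-vol-bound} but with the integration restricted to those $\xi\in\Sigma_p$ along which $\partial X$ is reached within distance $R$. One then passes from $T_pX$ to $M^m(\kappa)$ by the Bishop inequality for the space of directions: $\partial B_R^\kappa(o_p)$ decomposes into a spherical cap isometric to a rescaling of $\Sigma_p$ and, when $p\in\partial X$, a $\kappa$-cone collar over $\partial\Sigma_p$. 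Since $\mathcal{H}^{m-1}(\Sigma_p)\le\mathcal{H}^{m-1}(\SS^{m-1})$ for interior $p$, and since for $p\in\partial X$ doubling $\Sigma_p$ and applying Bishop gives $\mathcal{H}^{m-1}(\Sigma_p)\le\tfrac12\mathcal{H}^{m-1}(\SS^{m-1})$ while Petrunin's solution of Lytchak's problem applied to $\Sigma_p$ gives $\mathcal{H}^{m-2}(\partial\Sigma_p)\le\mathcal{H}^{m-2}(\SS^{m-2})$, the right-hand side above is at most $\mathcal{H}^{m-1}(\partial B_R^\kappa)$, resp.\ $\mathcal{H}^{m-1}(\partial B_{R,+}^\kappa)$. (For $R=\pik/2$ when $\kappa>0$ one either passes to the limit $R\to\pik/2$ or, in the interior case, invokes Petrunin's bound directly as in Theorem~\ref{thm-vol-bound}.)

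For the rigidity statement, suppose equality holds; then every inequality used above is an equality. In particular the Bishop/Lytchak estimates for $\Sigma_p$ are equalities, so $\Sigma_p$ is a round sphere (resp.\ a round hemisphere); the radial comparison is an equality, so $\overline{B_R(p)}$ is isometric to $B_R^\kappa$ (resp.\ to $B_{R,+}^\kappa$); and the restriction of the integration to ``directions reaching $\partial X$ within distance $R$'' must have been no restriction at all, which together with equality in the $\Sigma_p$-estimate forces that there is no portion of $X$ outside $\overline{B_R(p)}$ --- otherwise such a portion would either use up directions in $\Sigma_p$ that do not reach $\partial X$ inside $B_R(p)$ (contradicting equality after the Bishop step) or enlarge $\partial X$ beyond $\overline{B_R(p)}$. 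Hence $X=\overline{B_R(p)}$, so $\partial X\subseteq\overline{B_R(p)}$ and equality in the local bound is exactly equality in Theorem~\ref{thm-vol-bound}; the classification of Theorem~\ref{main thm} then identifies $X$.

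The main obstacle is precisely transplanting the radial argument of Theorem~\ref{thm-tancone-vol-bound} from an honest Alexandrov space to the metric ball $\overline{B_R(p)}$: one must control the behaviour of minimal geodesics from $p$ near the distance sphere and near the cut locus, verifying that directions whose radial geodesics leave $\overline{B_R(p)}$ (or stop being minimal) before hitting $\partial X$ produce no spurious contribution, and, for the rigidity case, that any mass of $X$ beyond $\overline{B_R(p)}$ leaves a quantitative defect. This is where the extremality of $\partial X$ and the hypothesis $R\le\pik/2$ are genuinely used, and it is also the mechanism that pins down the rigidity models of Theorem~\ref{main thm}.
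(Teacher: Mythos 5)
Your outline leaves the central step of the theorem unproved, and you in fact flag it yourself as ``the main obstacle'' without resolving it. The substance of the local bound is a covering statement: every point of $\partial X\cap B_R(p)$ must be exhibited as the image, under the $1$-Lipschitz map $\kgexp$, of the $(m-1)$-dimensional set $\partial B_R^{\kappa}(o_p)\subset T_pX$ (this is Lemma \ref{lem-exp-onto-bry} in the global case). This is \emph{not} a ``purely radial'' statement about minimal geodesics from $p$: a boundary point $x$ with $\dd(p,x)<R$ which is noncritical for $\dd(\cdot,p)$ is in general not $\kgexp(R\,v)$ for $v$ the direction of $[px]$, and $\partial X$ is not a radial graph over a subset of $\Sigma_p$ (several boundary points may lie on one radial geodesic), so ``restricting the integration to directions along which $\partial X$ is reached within distance $R$'' does not parametrize $\partial X\cap B_R(p)$ with controlled $\mathcal H^{m-1}$. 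The actual proof of the covering statement uses extremality of $\partial X$ in $X$ (Theorem \ref{cor-bry-extremal}) to run a backward gradient curve of $f=\itank(\dd(\cdot,p))$ inside $\partial X$ through $x$, and an Intermediate Value Theorem argument along it to produce a \emph{different} direction $w$ with $\kgexp(Rw)=x$. Your plan to transplant this to $Y=\overline{B_R(p)}$ does not work as stated: $Y$ need not be Alexandrov and its metric boundary (which contains part of the distance sphere) is not extremal, so there is no gradient-flow invariance to appeal to. The paper's fix is much simpler and does not involve $Y$ at all: since $\dd(\cdot,p)$ is nondecreasing along gradient curves of $f$, the backward curve through any $x\in\partial X\cap B_R(p)$ stays in $\partial X\cap B_R(p)$, so the proof of Lemma \ref{lem-exp-onto-bry} goes through verbatim without the hypothesis $X=\overline{B_R(p)}$, giving $\partial X\cap B_R(p)\subset \kgexp(\partial B_R^{\kappa}(o_p))$; after that, your second paragraph (comparing $\partial B_R^{\kappa}(o_p)$ with the model sphere resp.\ half-ball boundary via the maximal volume of $\Sigma_p$, its double, and $\partial\Sigma_p$) is essentially the same reduction the paper uses to pass from Theorem \ref{thm-tancone-vol-bound} to Theorem \ref{thm-vol-bound}.

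The strictness claim is also only asserted. The mechanism you offer --- that mass of $X$ outside $\overline{B_R(p)}$ would ``use up directions in $\Sigma_p$ that do not reach $\partial X$ inside $B_R(p)$'' --- is not a valid inference: a single direction can simultaneously point to points beyond distance $R$ and be a $\kgexp$-preimage of boundary inside the ball, and equality in the Bishop-type step for $\Sigma_p$ is not contradicted merely by the existence of points of $X$ outside $B_R(p)$. What equality actually yields is equality throughout the chain $\mathcal H^{m-1}(\partial X\cap B_R(p))\le \mathcal H^{m-1}(\partial B_R^{\kappa}(o_p))\le \mathcal H^{m-1}(\partial B_R^{\kappa})$ (resp.\ the half-ball version), and from there one has to run the rigidity machinery of Section \ref{sec:rigidity} --- $\kgexp$ restricted to $\partial B_R^{\kappa}(o_p)$ is measure-preserving, hence an intrinsic isometry onto the boundary piece, hence the space contains a convex model ball, etc.\ --- to force $X=\overline{B_R(p)}$ and then identify $X$ by Theorem \ref{main thm}. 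None of that is supplied by the ``no restriction at all'' sentence, so both the inequality and the rigidity parts of your proposal have genuine gaps at exactly the points where the real work lies.
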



We also completely analyze the equality case in the setting of Theorem~\ref{thm-vol-bound}. For $\kappa > 0$, we will refer to the intersection of two closed hemispheres intersecting at an angle of $\alpha$ in $M^{m}(\kappa)$ as an Alexandrov lens and denote it by $L^{m,\kappa}_{\alpha}$ after the convention of \cite{GP-almost-maximal}. Note that $L^{m, \kappa}_{\pi}$ is just the closed hemisphere in $M^{m}(\kappa)$.
\begin{Thm}\label{main thm}
    Let $X^m$ be Alexandrov of $\curv\ge\kappa$. Let $R > 0$ and assume that $\overline{B_{R}(p)}=X$ for some $p \in X$. If $X$ satisfies
    \begin{equation}\label{max bd vol}
    \mathcal{H}^{m-1}(\partial X) = \mathcal{H}^{m-1}(\partial B_{R}^{\kappa})
\end{equation}
then $X = \overline{B_R^{\kappa}} \subset M^{m}(\kappa)$ if either $\kappa \leq 0$ or if $\kappa > 0$ and $R < \pik/2.$ In the case that $\kappa > 0$ and $R = \pik/2,$ $X$ must be $L^{m, \kappa}_{\alpha}$ for some $\alpha \in (0, \pi]$.

Furthermore, if $p\in\partial X$ and $X$ satisfies 
\begin{equation}\label{eq-upper-br}
    \mathcal{H}^{m-1}(\partial X) = \mathcal{H}^{m-1}(\partial B_{R,+}^{\kappa} ),
\end{equation}
then $X = \overline{B_{R,+}^{\kappa}} \subset M^{m}(\kappa)$ if either $\kappa \leq 0$ or if $\kappa > 0$ and $R < \pik/2.$ In the case that $\kappa > 0$ and $R = \pik/2,$ $X$ must be $L^{m, \kappa}_{\alpha}$ for some $\alpha \in (0, \pi/2]$. 
\end{Thm}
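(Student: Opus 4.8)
The plan is to establish the rigidity statement of Theorem~\ref{main thm} by combining the sharp volume bound of Theorem~\ref{thm-vol-bound} (and its sharpened cone version, Theorem~\ref{thm-tancone-vol-bound}) with a first-variation/monotonicity analysis of the boundary volume along the distance function from $p$, and then bootstrapping infinitesimal rigidity to global rigidity via a splitting/cone argument. Throughout, write $f(r) = \mathcal{H}^{m-1}(\partial X \cap B_r(p))$ and compare it with the model profile $r \mapsto \mathcal{H}^{m-1}(\partial B_{r,(+)}^{\kappa})$. The equality hypothesis \eqref{max bd vol} forces equality at $r = R$, and the local monotone comparison (the key quantitative input behind Theorems~\ref{thm-vol-bound} and \ref{thm-vol-bound-local}, which should appear as a coarea-type inequality $f'(r) \le (\text{model derivative})$ for a.e.\ $r$, together with the endpoint control on $\partial X \cap \partial B_r(p)$) forces equality for \emph{all} $r \in (0,R]$. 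In particular, taking $r \to 0$ and using that $\partial X \cap B_r(p)$ rescaled converges to $\partial T_pX \cap B_1(o_p)$ (via Theorem~\ref{thm-tancone-vol-bound} applied at small scales, or directly the Bishop-type density monotonicity at $p$), we conclude that the space of directions $\Sigma_p$ must be volume-extremal among $(m-1)$-dimensional Alexandrov spaces of curvature $\ge 1$ with boundary, hence by Petrunin's rigidity (the equality case of the $\curv \ge 1$ boundary bound cited in the excerpt from \cite{petrun-semiconcave}) $\Sigma_p$ is isometric to a round hemisphere $\SS^{m-1}_+$ when $p \in \partial X$, and when $p \in \Int X$ a similar forced-equality analysis at $p$ pins down $T_pX$.

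Next I would upgrade this infinitesimal information to a global statement. The equality $f(r) = \mathcal{H}^{m-1}(\partial B_r^\kappa)$ for all $r$ should force the boundary $\partial X$, equipped with the induced length metric, to have curvature $\ge \kappa$ (no loss in the Gauss-equation sense) \emph{and} to be maximal in the sense that every geodesic from $p$ hits $\partial X$ "straight", so that $\exp_p$ is a metric submersion onto $X$ in the appropriate model sense. Concretely, one uses the equality case of the $(1{+}1)$-dimensional comparison along each unit-speed geodesic $\gamma$ from $p$: equality in the global Hessian/second-variation comparison for $\dd(p,\cdot)$ forces every such $\gamma$ to lie in a totally geodesic surface isometric to a piece of $M^2(\kappa)$, and forces the boundary-normal geodesics to have no focal points before radius $R$. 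Assembling these surfaces, $X$ must be isometric to the metric cone (in the $\kappa$-sense) over $\Sigma_p$ truncated at radius $R$; i.e., $X = C^R_\kappa(\Sigma_p)$ with $\Sigma_p = \SS^{m-1}$ (for $p \in \Int X$, $\partial X = \emptyset$ contradiction unless one accounts correctly — here one instead argues $\Sigma_p$ has boundary and is $\SS^{m-2}\text{-suspension-like}$) or $\Sigma_p = \SS^{m-1}_+$ (for $p \in \partial X$). For $\kappa \le 0$ or $\kappa > 0$ with $R < \pik/2$, the truncated $\kappa$-cone over the round (hemi)sphere is exactly $\overline{B_R^\kappa}$ (resp.\ $\overline{B_{R,+}^\kappa}$) in $M^m(\kappa)$, giving the claimed rigidity; for $\kappa > 0$ with $R = \pik/2$, the cone can close up in one extra direction, and the only possibilities compatible with $\curv \ge \kappa$ and the maximal boundary volume are the Alexandrov lenses $L^{m,\kappa}_\alpha$ (with $\alpha \le \pi$ in general and $\alpha \le \pi/2$ when $p \in \partial X$, the constraint coming from the angle that the two hemispheres meet being controlled by whether $p$ lies on their intersection).

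To make the global step precise I would invoke the existing rigidity toolkit for Alexandrov spaces: the equality case of the Bishop--Gromov inequality for boundary volume should be reducible, after the infinitesimal analysis, to an equality case of the "volume of the boundary = volume of model boundary" statement \emph{at every intermediate radius}, which by induction on $m$ (using that $\partial X$ is itself Alexandrov of $\curv \ge \kappa$ and inherits an analogous maximal-volume condition, via the doubling theorem and the fact that $\partial(\partial X) = \emptyset$) forces the metric cone structure. The base case $m = 2$ is elementary: an Alexandrov surface of $\curv \ge \kappa$ with $\overline{B_R(p)} = X$ and maximal boundary length is a round (half-)disk or a $2$-dimensional lens, checkable directly from Gauss--Bonnet and the first variation of arclength.

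I expect the main obstacle to be the global step — promoting the infinitesimal rigidity of $\Sigma_p$ (which follows relatively cleanly from Petrunin's rigidity and the forced equality in the monotone comparison) to the conclusion that $X$ is globally a model region. The difficulty is two-fold: first, one must show there are \emph{no} focal/cut points of $\partial X$ inside $B_R(p)$ and that $\exp_p$ behaves like the model exponential map all the way out to radius $R$, which requires carefully tracking the equality discussion in the second-variation comparison along \emph{every} geodesic, not just generic ones, in a possibly singular space; second, in the borderline case $\kappa > 0$, $R = \pik/2$, one must correctly enumerate how the cone can degenerate and verify that only the lenses $L^{m,\kappa}_\alpha$ arise, ruling out more exotic gluings — here the constraint $\alpha \le \pi/2$ versus $\alpha \le \pi$ must be extracted from the position of $p$ relative to the singular set. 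A clean way to organize this is to first prove the local theorem's rigidity (Theorem~\ref{thm-vol-bound-local}) — that strict inequality holds unless $X = \overline{B_R(p)}$ and $X$ is a model space — and then deduce Theorem~\ref{main thm} as the special case where the local bound at $p$ is attained with the global volume, so that most of the work is front-loaded into the monotonicity-with-equality argument.
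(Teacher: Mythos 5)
Your plan has genuine gaps, and its central mechanism fails at the first step. The claimed monotone comparison ``equality at $r=R$ forces equality of $f(r)=\mathcal{H}^{m-1}(\partial X\cap B_r(p))$ with the model profile for all $r\in(0,R]$'' cannot be true: in the rigid model itself, $X=\overline{B_R^{\kappa}}$ with $p$ the center, the boundary lies entirely at distance exactly $R$ from $p$, so $f(r)=0<\mathcal{H}^{m-1}(\partial B_r^{\kappa})$ for every $r<R$. The local bound (Theorem~\ref{thm-vol-bound-local}) is a separate estimate at each scale, not a monotonicity formula, and equality at the outer radius does not propagate inward; consequently the blow-up of $\partial X$ at $p$ that you want to use to pin down $\Sigma_p$ is empty in the interior-point case. (The conclusion you want about $\Sigma_p$ \emph{is} available, but by a different route: Theorem~\ref{thm-tancone-vol-bound} bounds $\mathcal{H}^{m-1}(\partial X)$ by the boundary volume of the $R$-ball in the $\kappa$-cone over $\Sigma_p$, and volume rigidity for $\curv\ge 1$ then forces $\Sigma_p\cong\SS^{m-1}$, resp.\ a hemisphere.) Two further steps are not viable as stated: the ``second-variation/Hessian equality along every geodesic, no focal points, totally geodesic $M^2(\kappa)$ leaves'' argument is smooth Riemannian technology with no counterpart for singular Alexandrov spaces, and your induction rests on ``$\partial X$ is itself Alexandrov of $\curv\ge\kappa$,'' which is precisely the open conjecture the paper points out is unavailable.

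For comparison, the paper's proof runs through three different steps. First, since $\kgexp$ is $1$-Lipschitz, onto $\partial X$ (Lemma~\ref{lem-exp-onto-bry}), and, by the equality hypothesis, $\mathcal{H}^{m-1}$-preserving from $\partial B_R^{\kappa}(o_p)$ to $\partial X$, a Lipschitz--volume rigidity argument (regularity of all boundary points, injectivity, then a blow-up/separation argument) shows $\kgexp$ is an isometry between $\partial B_R^{\kappa}(o_p)$ and $\partial X$ with their intrinsic metrics. Second, using the comparison property of $\kgexp$ (Proposition~\ref{exp comp}) and this boundary isometry, one shows the map $x\mapsto\log_p([px])$ is a distance-preserving embedding with convex image, i.e.\ $X$ is a convex subset of $M^m(\kappa)$ contained in $\overline{B_R^{\kappa}(p)}$. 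Third, a monotonicity theorem for boundary volumes of \emph{nested convex bodies} in $M^m(\kappa)$ (proved via Crofton's formula, with strictness when $R<\pik/2$) forces $X$ to be the whole ball (resp.\ half-ball), while the borderline case $\kappa>0$, $R=\pik/2$ is handled by the rigidity result of \cite{GP-almost-maximal}, yielding the lenses $L^{m,\kappa}_{\alpha}$. If you want to salvage your outline, the pieces you need to replace are exactly your monotonicity-with-equality step (replace it by the tangent-cone volume bound plus boundary Lipschitz--volume rigidity) and your second-variation/cone-splitting step (replace it by the convex-embedding argument), and you should not assume $\partial X$ is Alexandrov anywhere.
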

Rigidity in the case $\kappa = 1, R = \pi/2$ was first addressed and proved in \cite{GP-almost-maximal}. However, their proof strongly uses the rigidity from the maximality of radius and does not generalize to arbitrary $\kappa$ and $R$. Instead we use a much more analytical approach. We will use their rigidity result in one step of our argument but it should also be possible to use our methods to give an alternative proof for this case directly using induction. For the sake of brevity, we will not write out the details but will give a short discussion at the relevant section (see Remark \ref{alternate proof}).  

We also analyze almost rigidity.

\begin{Thm}\label{thm-almost-rigidity}
For any $\epsilon > 0$, there exists $\delta(\kappa, m, R)$ such that the following holds:
Let $X^m$ be Alexandrov with $\curv \ge \kappa$. Let $R > 0$ and assume that $\overline{B_{R}(p)}=X$ for some $p \in X$. Assume $X$ satisfies
\begin{equation}\label{almost max bd vol}
  \mathcal{H}^{m-1}(\partial X) > \mathcal{H}^{m-1}(\partial B_{R}^{\kappa}) - \delta.
\end{equation}
If $\kappa > 0$ and $R < \pik/2$ or $\kappa \leq 0$ then
\begin{equation*}
	\dd_{GH}(X, \overline{B_R^{\kappa}}) < \epsilon.
\end{equation*}
If $\kappa > 0$ and $R = \pik/2$, then there exists $Y$, which is either $L^{m, \kappa}_{\alpha}$ for some $\alpha \in (0, \pi]$ or the closed ball of radius $R$ in $M^{m-1}(\kappa)$, so that
\begin{equation*}
	\dd_{GH}(X, Y) < \epsilon.
\end{equation*} 
Furthermore, assuming that $p \in \partial X$ and that $X$ satisfies 
\begin{equation}\label{almost max bd vol 2}
  \mathcal{H}^{m-1}(\partial X) > \mathcal{H}^{m-1}(\partial B_{R, +}^{\kappa}) - \delta.
\end{equation}
If $\kappa > 0$ and $R < \pik/2$ or $\kappa \leq 0$ then
\begin{equation*}
	\dd_{GH}(X, \overline{B_{R,+}^{\kappa}}) < \epsilon.
\end{equation*}
If $\kappa > 0$ and $R = \pik/2$, then there exists $Y$, which is either $L^{m, \kappa}_{\alpha}$ for some $\alpha \in (0, \pi/2]$ or the closed ball of radius $R$ in $M^{m-1}(\kappa)$, so that
\begin{equation*}
	\dd_{GH}(X, Y) < \epsilon.
\end{equation*} 
\end{Thm}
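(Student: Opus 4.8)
The plan is to deduce the almost rigidity from the rigidity of Theorem~\ref{main thm} by a Gromov--Hausdorff compactness and contradiction argument. Suppose the statement fails for some $\epsilon>0$: then there are Alexandrov spaces $X_i^m$ of $\curv\ge\kappa$ with $\overline{B_R(p_i)}=X_i$ (with $p_i\in\partial X_i$ in the half-ball case) satisfying $\mathcal H^{m-1}(\partial X_i)>\mathcal H^{m-1}(\partial B_R^\kappa)-\tfrac1i$ (resp.\ $>\mathcal H^{m-1}(\partial B_{R,+}^\kappa)-\tfrac1i$), yet $\dd_{GH}(X_i,Y)\ge\epsilon$ for every space $Y$ on the list in the conclusion. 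Since $\overline{B_R(p_i)}=X_i$ forces $\diam X_i\le 2R$ (and $\le\pik$ if $\kappa>0$, $R\le\pik/2$), the sequence $\{X_i\}$ is precompact; passing to a subsequence, $X_i\to X_\infty$ in the Gromov--Hausdorff sense and $p_i\to p_\infty$, where $X_\infty$ is Alexandrov of $\curv\ge\kappa$, $\overline{B_R(p_\infty)}=X_\infty$, and $k:=\dim X_\infty\le m$. The whole argument comes down to showing that $X_\infty$ is one of the listed model spaces, which contradicts the choice of the $X_i$.

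We first treat the non-collapsed case $k=m$. Then $\mathcal H^m(X_i)\to\mathcal H^m(X_\infty)>0$, the boundaries $\partial X_i$ converge without collapse to $\partial X_\infty$ (as $(m-1)$-dimensional Alexandrov spaces of $\curv\ge\kappa$), and hence $\mathcal H^{m-1}(\partial X_i)\to\mathcal H^{m-1}(\partial X_\infty)$. Therefore $\mathcal H^{m-1}(\partial X_\infty)\ge\mathcal H^{m-1}(\partial B_R^\kappa)$, while Theorem~\ref{thm-vol-bound} gives the reverse inequality; equality holds, and Theorem~\ref{main thm} forces $X_\infty=\overline{B_R^\kappa}$ (if $\kappa\le0$ or $\kappa>0$, $R<\pik/2$) or $X_\infty=L^{m,\kappa}_\alpha$ for some $\alpha\in(0,\pi]$ (if $\kappa>0$, $R=\pik/2$), each of which is on the list. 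The half-ball case is identical, with $B_R^\kappa$ and $\alpha\in(0,\pi]$ replaced by $B_{R,+}^\kappa$ and $\alpha\in(0,\pi/2]$.

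It remains to rule out, or classify, collapse ($k<m$). When $\kappa\le0$ or $\kappa>0$, $R<\pik/2$, collapse cannot occur: the proof of Theorem~\ref{thm-vol-bound} should yield, in this range, a quantitative lower bound $\mathcal H^m(X)\ge\Phi\big(\mathcal H^{m-1}(\partial X)\big)>0$ with $\Phi\big(\mathcal H^{m-1}(\partial B_R^\kappa)\big)=\mathcal H^m(B_R^\kappa)$ --- morally, near-maximal boundary volume forces the inradius, and then via the cone-over-boundary estimate the $m$-volume, to be near-maximal --- which is incompatible with $\mathcal H^m(X_i)\to0$. When $\kappa>0$ and $R=\pik/2$ collapse is possible but tightly constrained. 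Since $\mathcal H^{m-1}(\partial X_i)\ge c>0$ for $i$ large and $\diam X_i$ is bounded, a packing argument shows $k\ge m-1$, hence $k=m-1$, and then $X_\infty=\overline{B_{\pik/2}(p_\infty)}$ is an $(m-1)$-dimensional Alexandrov space of $\curv\ge\kappa$ of maximal radius $\pik/2$; invoking the maximal-radius rigidity of \cite{GP-almost-maximal} (after rescaling to $\kappa=1$) together with the boundary-volume control passed to the limit should identify $X_\infty$ with the closed ball of radius $R$ in $M^{m-1}(\kappa)$ --- again on the list, and the same with $\alpha\in(0,\pi/2]$ in the half-ball case. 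In every case we would then contradict $\dd_{GH}(X_i,Y)\ge\epsilon$.

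The main obstacle is the collapsed case. Both the quantitative volume lower bound $\mathcal H^m(X)\ge\Phi(\mathcal H^{m-1}(\partial X))$ that excludes collapse for $\kappa\le0$ or $R<\pik/2$, and the need (for $\kappa>0$, $R=\pik/2$) to pin the one-step-collapse limit down to precisely the $(m-1)$-ball rather than to some other maximal-radius $(m-1)$-space, will require a ``defect'' strengthening of Theorem~\ref{thm-vol-bound} extracted from its proof rather than a black-box application; combining that with \cite{GP-almost-maximal} and the passage of the boundary-volume bound to the limit is the delicate step. One also needs the (standard but nontrivial) fact that non-collapsed Gromov--Hausdorff limits of Alexandrov spaces with boundary have boundary equal to the limit of the boundaries, with convergence of $\mathcal H^{m-1}$-measures.
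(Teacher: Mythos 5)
Your skeleton --- a contradicting sequence, Gromov--Hausdorff compactness, stability of the boundary measure and Theorem~\ref{main thm} in the noncollapsed case --- is exactly the paper's reduction, and you correctly identify collapse as the crux; but that is precisely where the proposal has no proof. For $\kappa\le 0$ or $R<\pik/2$ you rule out collapse by positing a lower bound $\mathcal H^m(X)\ge\Phi\bigl(\mathcal H^{m-1}(\partial X)\bigr)$ ``extracted from the proof of Theorem~\ref{thm-vol-bound}''. No such bound can be extracted there: that proof only yields \emph{upper} bounds, via the $1$-Lipschitz gradient exponential covering $\partial X$, and the comparison tools available (Bishop--Gromov, cone comparison over $\Sigma_p$) likewise bound $\mathcal H^m$ from above, so the mechanism you sketch (``boundary volume forces inradius forces $m$-volume'') has no support. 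The paper rules out collapse quite differently: it splits on $d_i=\dd(p_i,\partial X_i)$; if $d_i\to 0$ it re-centers at a nearby boundary point and uses the \emph{strict} gap $\mathcal H^{m-1}(\partial B_{R,+}^{\kappa})<\mathcal H^{m-1}(\partial B_{R}^{\kappa})$ (valid only when $R<\pik/2$), while if $d_i\ge d>0$ it shows the boundaries would have to become dense in a collapsed limit --- using that the limit of $\partial X_i$ is extremal, Lemma~\ref{extr-dim-proper}, Fujioka's finiteness of $\mathcal H^{m-2}$ of extremal sets, and the local bound of Theorem~\ref{thm-vol-bound-local} to force $\mathcal H^{m-1}(\partial X_i)\to 0$ otherwise --- contradicting near-maximality. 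None of these ingredients appear in your argument, so the no-collapse step fails as written. (A side quibble: $\partial X_i$ is not known to be an Alexandrov space; the measure convergence you use is fine, but only as a cited stability result.)

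The collapsed case $\kappa>0$, $R=\pik/2$ is a larger gap. From $\mathcal H^{m-1}(\partial X_i)\ge c>0$ and Theorem~\ref{thm-vol-bound-local} you do get $\dim X_\infty=m-1$, but the next step is unfounded: $\overline{B_{\pik/2}(p_\infty)}=X_\infty$ gives radius \emph{at most} $\pik/2$, not maximal radius, and $X_\infty$ may have empty boundary, so ``invoking \cite{GP-almost-maximal} together with boundary-volume control passed to the limit'' has nothing to act on. What identifies the limit in the paper is: (i) the fibration theorem over the regular part of $X_\infty$ together with Corollary~\ref{cor-bry-conical} shows the regular fibers are arcs, so $\partial X_i\to X_\infty$ is locally an almost isometric $2$-to-$1$ map, whence $\mathcal H^{m-1}(X_\infty)=\tfrac12\mathcal H^{m-1}(\SS^{m-1})$; (ii) equality in absolute volume comparison then forces $X_\infty$ to be a hemisphere or a quotient of one by a boundary involution, i.e.\ $\RP^{m-1}$ or a join $\RP^k*\SS^l$; (iii) excluding these alternatives needs the $\pi_1$-epimorphism Lemma~\ref{lem-conv-pi1} for $\RP^{m-1}$ and, for the joins, a delicate spheroid-lifting argument based on the local structure of collapse (Theorem~\ref{thm-local-collapse}), soul considerations and $\pi_2$. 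Your proposal neither produces the volume identity in (i) nor anticipates the quotient alternatives in (ii), so there is no argument that the only collapsed limit is the $(m-1)$-ball; and the same $2$-to-$1$ fibration machinery, with the comparison $\tfrac12\mathcal H^{m-1}(\partial B_{R,+}^{\kappa})>\mathcal H^{m-1}(B^{\kappa,m-1}_R)$, is what the paper uses for the half-ball case with $R<\pik/2$. In short, the noncollapsed reduction is correct, but the collapsed case --- the actual content of the proof --- is left as an acknowledged, unfilled hole.
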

We comment briefly on the proof. In the case where $\kappa > 0$ and $R < \pik/2$ or $\kappa \leq 0$, we rule out the possibility of collapse in the almost rigidity situation and so the previous theorem follows directly from standard Gromov-Hausdorff compactness along with rigidity. In the case where $\kappa > 0$ and $R = \pik/2$, collapsing is clearly possible and so a more delicate analysis of collapsing limits is required to show that a collapse to the closed ball of radius $\pik/2$ in $M^{m-1}(\kappa)$ is all that can happen. 

\subsection{Open questions}

We end this section with some open questions. The most natural one is what can be said if lower sectional curvature bound is replaced by lower Ricci curvature bound. See \cite{DG18, KM21, BNS22} and references therein for the theory of the boundary of noncollapsed $\RCD(K,N)$ spaces.
\begin{Que}\label{que-ric-gen}
Let $(X, \dd, \mathcal{H}^N)$ be a noncollapsed $\RCD(K,N)$ space. Suppose that $\Rad (X) = R$. Is there an upper bound on $\mathcal{H}^{m-1}(\partial X)$? If so is the sharp bound given by the volume of the boundary of an $R$ ball in model space? Can one classify the rigidity cases?
\end{Que}

The current best estimate of this type is given by \cite{BNS22}[Theorem 1.4], where they prove an implicit bound under a further quantitative noncollapsed assumption. More precisely, they prove the following.
\begin{Thm}\label{thm-BNS-imp-bound}
Let $(X, \dd, \mathcal{H}^N)$ be a noncollapsed $\RCD(K, N)$ space. Let $p \in X$ so that $\mathcal{H}^N(B_1(p)) > v > 0$. Then for any $x \in \partial X \cap B_1(p)$ and $r \in (0,1)$,
\[
\mathcal{H}^{N-1}(B_r(x) \cap \partial X) \leq C(K, N, v)r^{N-1}.
\]
\end{Thm}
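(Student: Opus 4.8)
\smallskip
\noindent\emph{A possible approach.} The plan is to reduce the bound to a scale-invariant covering (Minkowski-type) estimate and then run a quantitative-stratification argument in the spirit of Cheeger--Naber and Li--Naber, using the boundary structure theory of De Philippis--Gigli \cite{DG18} and \cite{BNS22}. Fixing $x\in\partial X\cap B_1(p)$ and $r\in(0,1)$ and rescaling the metric by $r^{-1}$, the space becomes $\RCD(r^2K,N)$, hence $\RCD(\min(K,0),N)$, and Bishop--Gromov together with $\mathcal H^N(B_2(x))\ge\mathcal H^N(B_1(p))>v$ gives $\mathcal H^N(B_1(x))\ge v'(K,N,v)>0$ at the rescaled scale. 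So it suffices to prove that there is $C=C(K,N,v)$ such that $\partial X\cap B_1(x)$ can be covered by at most $Ct^{-(N-1)}$ balls of radius $t$ for every $t\in(0,1)$: then $\mathcal H^{N-1}_t(\partial X\cap B_1(x))\le Ct^{-(N-1)}\cdot C(N)t^{N-1}$, and letting $t\to 0$ and undoing the rescaling yields the theorem (no a priori finiteness or rectifiability of the boundary measure is needed, though these are available from \cite{BNS22}).

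\smallskip
\noindent\emph{Step 1 (boundary $\eps$-regularity).} The analytic core is: there are $\eps=\eps(K,N,v)>0$ and $C(N)$ such that if $0<s<1$ and $\dd_{GH}(B_s(y),B_s(z_0))<\eps s$, where $z_0\in\R^{N-1}\times\{0\}\subset\R^{N-1}\times[0,\infty)$ with the product metric, then $\partial X\cap B_{s/2}(y)$ can be covered by $C(N)\theta^{-(N-1)}$ balls of radius $\theta s$ for every $\theta\in(0,1/2)$; in particular $\mathcal H^{N-1}(\partial X\cap B_{s/2}(y))\le C(N)s^{N-1}$. This uses the characterization that the top boundary stratum $\mathcal S^{N-1}\setminus\mathcal S^{N-2}$ consists exactly of the points admitting a tangent cone isometric to the half-space $\R^{N-1}\times[0,\infty)$, together with a cone-splitting/transformation argument: $\eps$-closeness to the half-space at scale $s$ forces $N-1$ almost-splitting directions on $B_{s/2}(y)$, while the remaining ``half-line'' direction must terminate at boundary points, and a dimension reduction for the effective stratum of that direction gives the uniform $(N-1)$-dimensional covering count. (Alternatively, invoke the bi-H\"older boundary charts of \cite{BNS22} on $B_{s/2}(y)$ directly.)

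\smallskip
\noindent\emph{Step 2 (covering via quantitative stratification).} Fix $t\in(0,1)$ and iterate the following dichotomy over dyadic scales $2^{-j}\ge t$. For $z\in\partial X\cap B_1(x)$, the ball $B_{2^{-j}}(z)$ is either $\eps$-close to a half-space ball at scale $2^{-j}$ --- in which case Step 1 covers $\partial X$ inside $B_{2^{-j-1}}(z)$ at scale $t$ with the optimal count $C(N)(2^{-j}/t)^{N-1}$ and no further refinement is needed there --- or it is not, in which case $z$ is not $(N,\eps)$-strained and every almost-symmetric model seen at that scale is at most $(N-2)$-symmetric, so $z$ lies in the effective stratum $\mathcal S^{N-2}_{\eps,2^{-j}}$. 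Running this as in the Cheeger--Naber neck decomposition and controlling the bad generations by the Minkowski/packing form of the Li--Naber estimate \eqref{eq-li-naber} (namely $\mathcal S^{N-2}_{\eps,s}\cap B_1(x)$ is covered by $\le C(K,N,v,\eps)s^{-(N-2)}$ balls of radius $s$, contributing at most $C(K,N,v,\eps)s^{-(N-2)}\cdot s^{N-1}$ to the $(N-1)$-content at scale $s$) produces the desired covering of $\partial X\cap B_1(x)$ by $\le C(K,N,v)t^{-(N-1)}$ balls of radius $t$, completing the argument.

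\smallskip
\noindent\emph{Main obstacle.} The genuinely hard part is Step 1 --- upgrading the qualitative fact ``some tangent cone is a half-space'' to a scale-invariant $\eps$-regularity with a uniform covering count. This requires the full boundary structure theory for noncollapsed $\RCD(K,N)$ spaces (existence and characterization of boundary tangent cones, rectifiable/bi-H\"older local structure of $\partial X$, non-branching and cone-splitting), which is exactly what \cite{DG18, BNS22} provide; it is precisely this input that guarantees $\mathcal H^{N-1}(\partial X\cap B_1(x))<\infty$ in the first place. A tempting shortcut is to pass to the glued double $X\cup_{\partial X}X$, which is again noncollapsed $\RCD(K,N)$ with comparable volume lower bound and in which $\partial X$ is the fixed-point set of the reflection; but extracting a bound on $\mathcal H^{N-1}(\partial X)$ from singular-set estimates on the double still needs a version of Step 1, so it does not appear to shorten matters.
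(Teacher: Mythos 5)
You are attempting to prove a statement that this paper does not prove at all: Theorem \ref{thm-BNS-imp-bound} is quoted verbatim as background from \cite{BNS22} (their Theorem 1.4), and the surrounding discussion treats it as an external input whose $v$-dependence the authors explicitly list as an open problem (Question \ref{que-ric-gen} and the remarks after it). So there is no in-paper argument to compare against; what you have written is a reconstruction of the Bru\'e--Naber--Semola strategy, and in broad outline it does track their actual proof (a boundary $\eps$-regularity theorem at the top stratum combined with a neck-decomposition/quantitative-stratification covering). As a proof, however, it is a reduction rather than an argument: your Step 1 --- the uniform scale-invariant covering count on balls GH-close to half-space balls --- \emph{is} the core $\eps$-regularity theorem of the boundary structure theory of \cite{DG18, BNS22}, as you acknowledge, so the hard content is assumed, not proved.

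Beyond that, Step 2 has two concrete gaps. First, the dichotomy is not valid scale by scale: if $B_{2^{-j}}(z)$ fails to be $\eps$-close to a half-space ball, it does not follow that $z\in\mathcal S^{N-2}_{\eps,2^{-j}}$. The ball could instead be $\eps$-close to a ball in $\R^{N}$; excluding this for boundary points is itself a nontrivial quantitative statement in the $\RCD$ setting (in the Alexandrov case it is the strainer argument via \cite{BGP} recalled in the introduction, but for $\RCD$ spaces it is part of what \cite{BNS22} must prove). Moreover, membership in the effective stratum requires failure of almost-$(N-1)$-splitting at \emph{every} scale in $[2^{-j},1]$, not just at the single scale $2^{-j}$, which is exactly why the genuine proof needs the neck-decomposition bookkeeping rather than an independent dichotomy at each dyadic scale, and why a naive per-scale summation of the ``bad'' contributions can lose factors. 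Second, the covering input you invoke, \eqref{eq-li-naber}, is the Li--Naber estimate for \emph{Alexandrov} spaces; it does not apply to noncollapsed $\RCD(K,N)$ spaces, and its $v$-independent form is precisely what is unavailable in the Ricci/RCD setting. What you need is a Cheeger--Naber-type quantitative stratification bound for noncollapsed $\RCD(K,N)$ spaces, with $v$-dependence and in packing/Minkowski form at each scale (stronger than the Hausdorff-measure form quoted in the paper); this is again supplied by the machinery of \cite{BNS22}, so invoking it does not shorten the route. In short, the proposal correctly identifies the shape of the known proof but neither fills in its essential steps nor supplies a new one.
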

However, it remains an open question whether the dependence on $v$ can be removed and so even the existence of an implicit bound in Question \ref{que-ric-gen} is not known.

In the case where $X$ is Alexandrov with $\curv \ge \kappa > 0$ and with nonempty  boundary, it is natural to assume that $\Rad(X) = R \le \pik/2$. This is because if $R > \pik/2$ then $\partial X = \varnothing$ due to the Grove-Petersen radius sphere theorem \cite{GrPe-sphere-thm}. For $\RCD(\kappa(N-1),N)$ spaces with $\kappa > 0$ we do not know if there is an example with $R >  \pik/2$ and $\partial X \neq \varnothing$. 
\begin{Que}
Let $(X, \dd, \mathcal{H}^N)$ be a noncollapsed $\RCD(N-1, N)$ space with nonempty boundary. Is it possible that $\Rad(X) > \pi/2$?
\end{Que}

A trivial but useful corollary of Theorem \ref{thm-vol-bound-local} (see the proof of Theorem \ref{thm-almost-rigid-2}) is that a uniform lower bound on the volume of the boundary along a convergent sequence of Alexandrov spaces bounds the dimension of collapse. More precisely, if we have a sequence of Alexandrov spaces $(X^m_i, \dd_i, p_i)$ with $\curv \ge \kappa$ converging to $(X_\infty, \dd_\infty, p_\infty)$ and it holds that $\mathcal{H}^{m-1}(\partial X_i \cap B_1(p_i)) > c > 0$ for all $i$, then $\dim(X_\infty) \ge m-1$. More generally, a uniform lower bound on $\mathcal{H}^{k}(\mathcal{S}^k_{\epsilon}(X_i) \cap B_1(p_i))$ would also bound the dimension of collapse using the implicit bounds of \cite{Li-Naber}. In the $\RCD$ case, a similar result would follow easily in the same way if the implicit bound of Theorem \ref{thm-BNS-imp-bound} can be given without dependence on $v$. Since this is not known, we have the following question.

\begin{Que}\label{que-collapse}
If $(X_i, \dd_i, \mathcal{H}^N, p_i)$ are noncollapsed $\RCD(K,N)$ spaces converging to some $(X_\infty, \dd_\infty, \mm_\infty, p_\infty)$ satisfying $\mathcal{H}^{N-1}(\partial X_i \cap B_1(p_i)) > c > 0$ for all $i$. Is it true that $\dim(X_\infty) \ge N-1$? 
\end{Que} 
This question can be asked for either $\dim(X_\infty)$ being the Hausdorff dimension or the essential dimension of $X_\infty$. If a counterexample can be given for Question \ref{que-collapse} for the Hausdorff dimension then it would immediately imply that the estimates of Theorem \ref{thm-BNS-imp-bound} cannot be made independent of $v$.

Finally, in the setting of Alexandrov spaces one can ask for optimal bounds on other geometric invariants of the boundary.
\begin{Que}
Let $(X^m, \dd)$ be Alexandrov with $\curv \ge \kappa$ and $\Rad(X) = R$. What are the sharp bounds on the diameter and radius of a connected component of $\partial X$? What are the rigidity cases?
\end{Que}
An implicit bound can be given using \cite{Fuj-uniform-bounds-extr}[Theorem 6.6], which more generally bounds the maximal number of $\epsilon$-separated points on an extremal set with respect to its intrinsic geometry. Of course, sharp bounds would follow immediately in the case $\kappa = 1$ and $R = \pi/2$ from the conjecture that $\partial X$ is Alexandrov. We mention that the diameter rigidity of the boundary for this case was analyzed in \cite{GL21} under the additional assumption that $\partial X$ is a smooth Riemannian manifold of dimension $m-1$ with $\sec \ge 1$.\subsection{Acknowledgements}  The authors are grateful to Elia Bru\'{e}, Alexander Lytchak, Keaton Naff and Daniele Semola for helpful discussions and comments.  
\section{Preliminaries}\label{sec:prelim}

We begin by introducing some definitions and notations, see \cite{BGP,AKPbook} for a more careful treatment.

Given an Alexandrov space $X$ with curvature bounded below by $1$, we denote by $\kcone(X)$ the standard warped cone with curvature bounded below by $\kappa$. Notice that in the case of $\kappa > 0$ the cone is really a suspension where the radial parameter is between $0$ and $\pik$. We will usually write $C(X)$ for $C_0(X)$ since it is the standard Euclidean cone. 

Fix now dimension $m \in \N$ and $\kappa > 0$. Let $(X, \dd)$ be an Alexandrov space of dimension $m$ and curvature lower bound $\kappa$. For each $p \in X$ we denote by $\Sigma_pX$ the space of directions at $p$ and $(T_pX, o_p)=(C(\Sigma_pX), o_p)$ to be the tangent cone at $p$. When it is unambiguous we will often drop $X$ in above notations and just write $\Sigma_p$ and $T_p$.

Given $p, q \in X$, we will use $[pq]$ to denote a (usually not unique) geodesic between $p$ and $q$. We will use $\log_p([pq]) \in T_pX$ to denote the vector corresponding to this geodesic, i.e., $\log_p([pq])$ is in the direction of $[pq]$ and has magnitude $\dd(p,q)$. 

On the closed ball $\overline{B_{\pik/2}(o_p)} \subset T_p$, one may define a gradient exponentiation map $\gexp^{\kappa}_p: \overline{B_{\pik/2}(o_p)} \to X$ which shares similar properties as the usual exponentiation map on $M^{m}(\kappa)$. The original construction is from \cite{Petr-Per-grad-curves}. We point out that the map depends on $\kappa$.
The for $v\in \Sigma_p$ the curve $\sigma(t)=\kgexp(tv)$ satisfies the IVP

\begin{equation}\label{gexp-defn}
\begin{cases}
\sigma'_+(t)=\frac{\tank(\dd(\sigma(t), p))}{\tank(t)}\nabla_{\sigma(t)}\dd(\cdot, p)\\
\sigma'(0)_+=v
\end{cases}
\end{equation}
where $\tank := \snk/\csk$ is the generalized tangent function, see \cite{AKPbook}[Section 1.A] for the definitions of $\snk$ and $\csk$. 

Further, $\sigma(t)$ is a reparameterized gradient curve of the modified distance function $f(\cdot)=\itank(\dd(\cdot, p)),$ where $\itan_\kappa(t):=\int_0^t \tank(s)ds$. The function $f$ is semiconcave on $B_R(p)$ by construction. We refer to the proof of \cite{AKPbook}[16.31] for a careful discussion of these facts.

 We outline some of the key properties of the gradient exponentiation map we shall need below and refer to \cite[Chapter 16]{AKPbook} and \cite[Chapter 3]{petrun-semiconcave} for the proofs. The first proposition says that that $\kgexp$ agrees with shortest  geodesics where they exist.
\begin{Pro}
    For any $q \in X$ and any geodesic $[pq]$ from $p$ to $q$, $\kgexp(\log_p([pq])) = q$.
\end{Pro}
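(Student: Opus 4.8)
The plan is to recognize the minimizing geodesic $[pq]$, parametrized by arclength, as a solution of the defining initial value problem \eqref{gexp-defn}, and then to conclude by uniqueness of such solutions. First I would dispose of the trivial case $q=p$, where $\log_p([pq])=o_p$ and $\kgexp(o_p)=p$ by construction. Otherwise set $\ell:=\dd(p,q)>0$, let $\gamma\colon[0,\ell]\to X$ be the unit-speed parametrization of $[pq]$, and put $v:=\gamma^+(0)\in\Sigma_p$, the initial direction of $\gamma$, so that $\log_p([pq])=\ell\cdot v$; it then suffices to prove $\kgexp(\ell v)=\gamma(\ell)$. (If $\kappa>0$ we may of course assume $\ell\le\pik/2$, since otherwise $\log_p([pq])$ does not lie in the domain $\overline{B_{\pik/2}(o_p)}$ of $\kgexp$.)

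The one geometric input I would establish is the identity $\nabla_{\gamma(t)}\dd(\cdot,p)=\gamma^+(t)$ for $t\in(0,\ell)$, where $\gamma^+(t)\in\Sigma_{\gamma(t)}X$ denotes the forward unit tangent of $\gamma$ at $\gamma(t)$. Since $\gamma|_{[0,t]}$ is a shortest path, $\dd(\gamma(s),p)=s$ for $s\in[0,\ell]$, so the one-sided derivative of $s\mapsto\dd(\gamma(s),p)$ at $t$ equals $1$; that is, $d_{\gamma(t)}\dd(\cdot,p)(\gamma^+(t))=1$; and since $\dd(\cdot,p)$ is $1$-Lipschitz, $|\nabla_{\gamma(t)}\dd(\cdot,p)|\le 1$. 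The defining inequality of the gradient together with Cauchy--Schwarz in the nonnegatively curved tangent cone $T_{\gamma(t)}X$ then give
\[
1=d_{\gamma(t)}\dd(\cdot,p)\big(\gamma^+(t)\big)\le\inner{\nabla_{\gamma(t)}\dd(\cdot,p)}{\gamma^+(t)}\le|\nabla_{\gamma(t)}\dd(\cdot,p)|\le1,
\]
forcing equality throughout, whence $\nabla_{\gamma(t)}\dd(\cdot,p)=\gamma^+(t)$. (This is standard; see \cite[Chapter 16]{AKPbook}.)

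Granting this, I would check directly that $\gamma$ solves \eqref{gexp-defn} with initial direction $v$: its one-sided velocity at $t\in(0,\ell)$ is $\gamma^+(t)$, while the right-hand side of \eqref{gexp-defn} along $\gamma$ is $\tfrac{\tank(\dd(\gamma(t),p))}{\tank(t)}\nabla_{\gamma(t)}\dd(\cdot,p)=\tfrac{\tank(t)}{\tank(t)}\gamma^+(t)=\gamma^+(t)$, and $\gamma^+(0)=v$. Since $t\mapsto\kgexp(tv)$ is, by construction, the unique reparametrized gradient curve of the semiconcave function $f=\itank(\dd(\cdot,p))$ with this initial condition --- equivalently, the unique solution of \eqref{gexp-defn}, see \cite[Chapter 16]{AKPbook} --- it follows that $\kgexp(tv)=\gamma(t)$ on $[0,\ell)$, hence at $t=\ell$ by continuity. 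Taking $t=\ell$ gives $\kgexp(\log_p([pq]))=q$.

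The proof is essentially a matter of unpacking definitions, so I do not expect a serious obstacle; the one point that is not purely formal is the appeal to uniqueness. The field $x\mapsto\nabla_x\dd(\cdot,p)$ is discontinuous across the cut locus of $p$ and vanishes at $p$ (so $p$ is a fixed point of the associated flow), and naive ODE uniqueness fails; what rescues the argument --- the factor $\tank(t)^{-1}$ in \eqref{gexp-defn} desingularizing the behaviour at $t=0$, together with Petrunin's uniqueness theorem for gradient curves of semiconcave functions --- is precisely what has been recorded in the discussion preceding the statement, so no new work is needed here.
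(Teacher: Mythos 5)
Your proof is correct and follows essentially the same route as the paper, which offers no argument of its own for this proposition but defers to \cite[Chapter 16]{AKPbook} and \cite[Chapter 3]{petrun-semiconcave}, where precisely this reasoning (a minimizing geodesic from $p$ satisfies the radial-curve equation \eqref{gexp-defn} because $\nabla_{\gamma(t)}\dd(\cdot,p)=\gamma^+(t)$ along it, and one concludes by uniqueness) is carried out. The only point to phrase carefully is that the uniqueness you invoke is uniqueness of radial curves with prescribed initial direction at the singular initial point $p$ --- forward uniqueness of gradient curves from a given starting point does not by itself handle the $0/0$ behaviour of the coefficient $\tank(\dd(\sigma(t),p))/\tank(t)$ at $t=0$ --- but that is exactly the statement in \cite[Chapter 16]{AKPbook} which makes $\kgexp$ well defined, so your appeal to it is legitimate.
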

To state the next proposition we introduce the notation for comparison angles in $M^2(\kappa)$. For $a, b, c >0,$ we define
$\tilde{\measuredangle}^{\kappa}(a; b, c)$ to be the angle corresponding to the triangle with side lengths a, b, and c in $M^2(\kappa)$ that is opposite of the side with length $a$. For obvious reason we require $a+b+c < 2\pik.$ In the case where the triangle inequality fails, for example if $b+a < c$ or if $c+a < b$, we will extend the usual definition so that $\tilde{\measuredangle}^{\kappa}(a; b, c) = 0$. Similarly, $\tilde{\measuredangle}^{\kappa}(a; b, c) = \pi$ if $b+c < a$. The following proposition says that the gradient exponentiation map satisfies similar comparison estimates as the exponentiation map on $M^{m}(\kappa)$ up until a radius of $\pik/2.$
\begin{Pro}\label{exp comp}
    Let $q_1, q_2 \in X$ so that $\dd(p, q_i) \leq \pik/2$ for each $i$ and let $v_1, v_2 \in T_p$ be vectors which correspond to some geodesic from $p$ to $q_1$ and $q_2$ respectively. Let $s_1, s_2 \geq 1$ so that $|s_iv_i| \leq \pik/2$ for each $i$, then
    \begin{equation*}
        \kang(\dd(\kgexp(s_1v), \kgexp(s_2v)); |s_1v_1|, |s_2v_2|) \leq \kang(\dd(q_1, q_2); |v_1|, |v_2|).
    \end{equation*}
\end{Pro}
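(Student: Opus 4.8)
This is the basic comparison property of the $\kappa$-gradient exponentiation map due to Petrunin; see \cite[Chapter 16]{AKPbook} and \cite[Chapter 3]{petrun-semiconcave}. The plan is to reduce it to a monotonicity statement along the $\kgexp$-curves, which is then read off from the first variation formula together with the concavity of $f=\itank(\dd(\cdot,p))$. We may assume $|v_i|>0$; the degenerate case $v_i=0$ is the radial estimate $\dd(\kgexp(w),p)\le|w|$, which follows from \eqref{gexp-defn}, since $\rho(t):=\dd(\sigma(t),p)$ then satisfies $\rho'_+(t)\le\tank(\rho(t))/\tank(t)$ with $\rho(t)/t\to1$ as $t\to0$, so $\rho(t)\le t$ by a one-dimensional ODE comparison (this estimate is used again below). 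Put $\bar v_i:=v_i/|v_i|\in\Sigma_p$ and $\sigma_i(t):=\kgexp(t\bar v_i)$ for $t\in[|v_i|,\pik/2]$. By the preceding proposition $\sigma_i(|v_i|)=\kgexp(v_i)=q_i$, and $\kgexp(s_iv_i)=\sigma_i(s_i|v_i|)$. Define
\[
\theta(t_1,t_2):=\kang\big(\dd(\sigma_1(t_1),\sigma_2(t_2));\,t_1,\,t_2\big),\qquad t_i\in[|v_i|,\pik/2].
\]
Since $s_1,s_2\ge1$, the proposition is precisely the inequality $\theta(s_1|v_1|,s_2|v_2|)\le\theta(|v_1|,|v_2|)$, so it suffices to show that $\theta$ is non-increasing in each of its two arguments.

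Consider $\partial^+_{t_1}\theta$ with $t_2$ fixed. Differentiating the $M^2(\kappa)$ law of cosines implicitly, $\partial^+_{t_1}\theta\le0$ is equivalent to the assertion that $\dd(\sigma_1(t_1),\sigma_2(t_2))$ grows (from the right, in $t_1$) no faster than the side $[\tilde\sigma_1(t_1)\tilde\sigma_2(t_2)]$ of the $M^2(\kappa)$-comparison triangle grows when $\tilde\sigma_1(t_1)$ is pushed radially away from $\tilde p$ at unit speed with the angle at $\tilde p$ kept fixed; the latter rate equals $\cos\tilde\beta$, where $\tilde\beta$ is the angle of that comparison triangle at the vertex corresponding to $\sigma_1(t_1)$. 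On the space side, the first variation formula together with \eqref{gexp-defn} gives, for $\xi\in\Sigma_{\sigma_1(t_1)}$ the direction of a shortest geodesic to $\sigma_2(t_2)$,
\[
\frac{d^+}{dt_1}\dd(\sigma_1(t_1),\sigma_2(t_2))\ \le\ -\frac{\tank(\dd(\sigma_1(t_1),p))}{\tank(t_1)}\big\langle\nabla_{\sigma_1(t_1)}\dd(\cdot,p),\,\xi\big\rangle .
\]
One then combines two inputs: the radial estimate $\dd(\sigma_1(t_1),p)\le t_1$, which puts the factor $\tank(\dd(\sigma_1(t_1),p))/\tank(t_1)$ into $[0,1]$ (here $t_1<\pik/2$; the case $t_1=\pik/2$ follows by continuity), and the Alexandrov comparison for the distance function, in the form that $\csk(\dd(\cdot,p))$ is $(-\kappa\,\csk(\dd(\cdot,p)))$-concave — equivalently, since $\itank=-\tfrac1\kappa\ln\csk$ for $\kappa\neq0$ (and $f=\dd(\cdot,p)^2/2$ for $\kappa=0$), a Hessian bound on $f$ matching the one satisfied with equality by $\itank(\dd(\cdot,\tilde p))$ on $M^m(\kappa)$ — to bound the right-hand side above by $\cos\tilde\beta$. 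Note that on $M^m(\kappa)$ the curve defined by \eqref{gexp-defn} for the basepoint $\tilde p$ is exactly the unit-speed radial geodesic $t\mapsto\exp_{\tilde p}(t\tilde v)$, so this comparison literally says that the flow on $X$ spreads points no faster than the radial flow on the model. Integrating the resulting one-sided monotonicity in $t_1$, then in $t_2$, yields the proposition.

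The real work lies in making all of this rigorous, which is why the statement is a genuine theorem rather than a computation: the curves $\sigma_i$ are gradient curves of a merely semiconcave function and are a priori not $C^1$, the first variation formula is only a one-sided inequality, and $\dd(\cdot,p)$ is not smooth, with shortest geodesics to $p$ possibly non-unique, so the differentiation above and the insertion of the comparison must be carried out inside Petrunin's calculus of gradient flows of semiconcave functions; for this we refer to \cite[Chapter 16]{AKPbook}. The hypotheses fit this scheme exactly: the constraint $\dd(p,q_i),|s_iv_i|\le\pik/2$ is what keeps $\itank$ increasing and $\tank\ge0$, so that \eqref{gexp-defn} and the estimates above are meaningful, and the hypothesis $s_i\ge1$ places us on the outward portion of the flow, which is the direction in which $\theta$ is monotone (flowing inward toward $o_p$, $\theta$ can only increase).
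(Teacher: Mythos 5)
The paper does not prove this proposition at all; it quotes it from \cite[Chapter 16]{AKPbook} and \cite[Chapter 3]{petrun-semiconcave}, so deferring the hard work to those sources is consistent with the paper. The problem is that the sketch you wrap around the citation is built on a false intermediate claim: coordinatewise monotonicity of $\theta(t_1,t_2)=\kang\big(\dd(\sigma_1(t_1),\sigma_2(t_2));t_1,t_2\big)$ is simply not true. Take $\kappa=0$ (so there is no $\pik/2$ restriction) and let $X$ be the flat cylinder $\SS^1\times\R$ with circles of circumference $2\ell$, $p$ a point, $\bar v_1$ the direction of the line through $p$ and $\bar v_2$ a direction along the circle through $p$. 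Then $\sigma_1$ is a ray, so $\dd(p,\sigma_1(t_1))=t_1$, while $\sigma_2$ follows the circle until the antipodal point $z$ (with $\dd(p,z)=\ell$), which is a critical point of $\dd(\cdot,p)$, and then $\sigma_2(t_2)\equiv z$ for all $t_2\ge\ell$. For fixed $t_2>\ell$ one has $\dd(\sigma_1(t_1),\sigma_2(t_2))^2=\ell^2+t_1^2$, hence
\begin{equation*}
\cos\theta(t_1,t_2)=\frac{t_1^2+t_2^2-(\ell^2+t_1^2)}{2t_1t_2}=\frac{t_2^2-\ell^2}{2t_1t_2},
\end{equation*}
which is strictly decreasing in $t_1$; so $\theta(\cdot,t_2)$ is strictly \emph{increasing}. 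Choosing $q_2$ on the circle with $|v_2|=\ell/2$ and $s_2=t_2/|v_2|$, the step $\theta(s_1|v_1|,s_2|v_2|)\le\theta(|v_1|,s_2|v_2|)$ of your scheme fails for every $s_1>1$ (the proposition itself is not contradicted, because $\theta(|v_1|,s_2|v_2|)$ has already dropped far below $\theta(|v_1|,|v_2|)=\pi/2$).

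The same example shows where your differential estimate breaks, and that the obstruction is not the regularity bookkeeping you flag at the end. The first variation plus the concavity of $f=\itank(\dd(\cdot,p))$ along $[\sigma_1(t_1)\sigma_2(t_2)]$ (which is equivalent to the hinge comparison) bounds $\langle\nabla\dd(\cdot,p),\xi\rangle$ only in terms of the comparison triangle with the \emph{actual} side lengths $\rho_i=\dd(p,\sigma_i(t_i))$, whereas your target $\cos\tilde\beta$ refers to the model triangle with sides $t_1,t_2$; since the two curves can lag behind their parameters by different amounts, the factor $\tank(\rho_1)/\tank(t_1)\le 1$ does not compensate. For $\kappa=0$ the pointwise inequality you would need reduces to $t_1^2-\rho_1^2\ge t_2^2-\rho_2^2$, which has no reason to hold and fails exactly in the configuration above ($\rho_1=t_1$, $\rho_2<t_2$). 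This is precisely why the proofs in \cite{AKPbook} and \cite{petrun-semiconcave} do not freeze one curve and move the other: they evolve both radial curves against a coupled model configuration, and indeed in the Euclidean computation the two endpoint contributions only close up after summing over both moving endpoints (they cancel under simultaneous proportional flow), a cancellation your one-variable-at-a-time reduction destroys. So as written the proposal is not a proof beyond the citation; a self-contained argument must use the genuinely two-point (coupled) comparison, not coordinatewise monotonicity.
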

If we identify $B_{\pik/2}(o_p) \subset T_p$ with $B^\kappa_{\pik/2}(o_p) \subset \kcone(\Sigma_p)$ in the obvious way and use this to define an exponential map on the latter, which by abuse of notation we also denote $\kgexp$, then from the previous proposition we immediately have the following.
\begin{Pro}\label{exp short}
    The map $\kgexp: B^{\kappa}_{\pik/2}(o_p) \to X$ is $1$-Lipschitz. 
\end{Pro}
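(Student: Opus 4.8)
The plan is to deduce this from the comparison estimate of Proposition~\ref{exp comp} together with the definition of the $\kappa$-cone metric on $T_pX=\kcone(\Sigma_p)$: that metric is constructed precisely from the law of cosines in $M^2(\kappa)$, which is exactly the quantity Proposition~\ref{exp comp} controls. Since the directions of genuine geodesics from $p$ are dense in $\Sigma_p$ and $\kgexp$ is continuous (part of its construction as a reparametrized gradient flow of a semiconcave function; see \cite[Chapter 16]{AKPbook}), both sides of the asserted inequality $\dd(\kgexp(w_1),\kgexp(w_2))\le|w_1w_2|_{\kcone(\Sigma_p)}$ depend continuously on $(w_1,w_2)$, so it suffices to prove it for $w_i=\ell_ie_i$ with $e_i\in\Sigma_p$ the direction of some geodesic $\gamma_i$ from $p$ and $\ell_i\in[0,\pik/2]$; we may also assume $\ell_1,\ell_2>0$, the case $\ell_1\ell_2=0$ being a special instance of the radial bound recorded next.

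First I would record the radial bound $\dd(p,\kgexp(w))\le|w|$ for every $w\in B^{\kappa}_{\pik/2}(o_p)$. Writing $w=|w|u$ with $u\in\Sigma_p$ and $\sigma(t)=\kgexp(tu)$, it follows from \eqref{gexp-defn} and $|\nabla\dd(\cdot,p)|\le1$ that $r(t):=\dd(\sigma(t),p)$ satisfies $r(0)=0$ and $r'(t)\le\tank(r(t))/\tank(t)$; since $\rho(t)=t$ solves the corresponding equation with equality, ODE comparison gives $r(t)\le t$, hence $\dd(p,\kgexp(w))=r(|w|)\le|w|$. (This is standard and is part of the construction of $\kgexp$.)

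Now fix geodesic directions $e_1,e_2$ and radii $\ell_1,\ell_2\in(0,\pik/2]$. Pick $t_i\in(0,\min(\ell_i,\pik/2)]$ not exceeding the length of $\gamma_i$, set $q_i=\gamma_i(t_i)$, so that $\dd(p,q_i)=t_i\le\pik/2$ and $v_i:=\log_p([pq_i])=t_ie_i$, and put $s_i=\ell_i/t_i\ge1$; then $s_iv_i=\ell_ie_i$ and all three hypotheses of Proposition~\ref{exp comp} are met. That proposition, combined with the monotonicity of comparison angles along the hinge $(\gamma_1,\gamma_2)$ in an Alexandrov space of $\curv\ge\kappa$, gives
\begin{equation*}
    \kang\bigl(\dd(\kgexp(\ell_1e_1),\kgexp(\ell_2e_2));\,\ell_1,\ell_2\bigr)\ \le\ \kang\bigl(\dd(q_1,q_2);\,t_1,t_2\bigr)\ \le\ \angle(e_1,e_2),
\end{equation*}
where $\angle(e_1,e_2)$ is the angle between $\gamma_1$ and $\gamma_2$ at $p$, i.e.\ the distance from $e_1$ to $e_2$ in $\Sigma_p$.

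It remains to turn this into a distance inequality. By the definition of the $\kappa$-cone metric, for $\ell_1,\ell_2\le\pik/2$ the quantity $|\ell_1e_1\,\ell_2e_2|_{\kcone(\Sigma_p)}$ is exactly the side opposite the angle $\angle(e_1,e_2)$ in the $M^2(\kappa)$-triangle with adjacent sides $\ell_1,\ell_2$; equivalently $\kang(|\ell_1e_1\,\ell_2e_2|_{\kcone(\Sigma_p)};\ell_1,\ell_2)=\angle(e_1,e_2)$. Since $a\mapsto\kang(a;\ell_1,\ell_2)$ is non-decreasing on $[0,\ell_1+\ell_2]$ (strictly increasing on $(|\ell_1-\ell_2|,\ell_1+\ell_2)$, as $\ell_1,\ell_2\in(0,\pik)$) and $\dd(\kgexp(\ell_1e_1),\kgexp(\ell_2e_2))\le\ell_1+\ell_2$ by the triangle inequality and the radial bound, the displayed chain yields $\dd(\kgexp(\ell_1e_1),\kgexp(\ell_2e_2))\le|\ell_1e_1\,\ell_2e_2|_{\kcone(\Sigma_p)}$; the degenerate subcases $\angle(e_1,e_2)=\pi$ and $\dd(\kgexp(\ell_1e_1),\kgexp(\ell_2e_2))\le|\ell_1-\ell_2|$ are immediate. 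I expect no serious obstacle here; the only mildly delicate points are the bookkeeping that makes the three hypotheses of Proposition~\ref{exp comp}, namely $s_i\ge1$, $|s_iv_i|\le\pik/2$ and $\dd(p,q_i)\le\pik/2$, hold simultaneously — which is exactly why one slides the auxiliary points $q_i$ close to $p$ — and the handling of the degenerate conventions for comparison angles.
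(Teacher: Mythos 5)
Your argument is correct and is essentially the paper's own deduction: the paper obtains Proposition \ref{exp short} ``immediately'' from Proposition \ref{exp comp} via the identification of $B_{\pik/2}(o_p)\subset T_pX$ with the $\kappa$-cone ball, which is precisely the law-of-cosines/angle-monotonicity computation you spell out (comparison angle of the images $\le$ comparison angle at the hinge $\le$ angle in $\Sigma_p$, then monotonicity in the opposite side). The extra ingredients you invoke to cover arbitrary directions --- the radial estimate $\dd(p,\kgexp(w))\le |w|$ and continuity of $\kgexp$, both standard properties of the gradient exponential in the references the paper cites for these facts --- make your write-up a more detailed but faithful version of the paper's proof.
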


\section{Upper bound on the volume of the boundary}
In this section we prove Theorems ~\ref{thm-vol-bound}, ~\ref{thm-tancone-vol-bound}, and ~\ref{thm-vol-bound-local}. We will focus on the first two theorems and give a brief discussion for the last one at the end of the section.

The main step is the following key Lemma which by Proposition \ref{exp short} immediately implies the above theorems. In this lemma we only assume that $X^m$ is Alexandrov with $\curv \geq \kappa$ and $X = \overline{B_R(p)}$ for some $p \in X$. 

\begin{Lem}\label{lem-exp-onto-bry}
   The boundary $\partial X$ is contained in $\kgexp(\partial B_{R}(o_p))$. 
\end{Lem}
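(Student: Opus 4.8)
The plan is to show that every boundary point $x \in \partial X$ lies in the image $\gexp^\kappa_p(\partial B_R(o_p))$, i.e., that $x = \gexp^\kappa_p(Rv)$ for some $v \in \Sigma_p$. The natural candidate for $v$ is the direction of a geodesic $[px]$ (extended to magnitude $R$ inside the cone $\kcone(\Sigma_p)$); the point is to show the gradient exponential curve $\sigma(t) = \gexp^\kappa_p(tv)$, which by Proposition~\ref{exp short} is $1$-Lipschitz and by Proposition~\ref{exp comp} is distance-nondecreasing from $p$ in the appropriate comparison sense, actually reaches $x$ exactly at time $t = \dd(p,x)$ and stays on $\partial X$ thereafter. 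Since $X = \overline{B_R(p)}$, we have $\dd(p,x) \le R$, so once we know $\sigma$ passes through $x$ and $\partial X$ is preserved under the flow, running the curve out to time $R$ (which is legitimate since $R \le \pik/2$ when $\kappa>0$, or unrestricted otherwise, matching the domain of $\gexp^\kappa_p$) lands $x$ in $\gexp^\kappa_p(\partial B_R(o_p))$.

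First I would recall that $\sigma$ is, up to reparametrization, the gradient flow curve of the semiconcave function $f = \itan_\kappa \circ \dd(\cdot,p)$. The key structural input is that the boundary $\partial X$ is an extremal subset of $X$ (this is standard, e.g.\ Perelman--Petrunin), and gradient flows of semiconcave functions preserve extremal subsets: if $\sigma(t_0) \in \partial X$ for some $t_0$, then $\sigma(t) \in \partial X$ for all $t \ge t_0$ in the domain. So the heart of the matter reduces to the claim: \emph{if $x \in \partial X$ and $v = \log_p([px])/\dd(p,x)$, then $\sigma(\dd(p,x)) = x$.} By Proposition~\ref{exp comp} (the gradient exponential map agrees with shortest geodesics where they exist, plus the angle comparison), $\sigma$ restricted to $[0,\dd(p,x)]$ is exactly the geodesic $[px]$ — indeed Proposition stated right before Proposition~\ref{exp comp} says $\gexp^\kappa_p(\log_p([px])) = x$. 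Hence $\sigma(\dd(p,x)) = x$ automatically, and then extremality of $\partial X$ under the gradient flow carries $\sigma$ inside $\partial X$ from time $\dd(p,x)$ onward, so $\sigma(R) \in \partial X$ and $x = \sigma(\dd(p,x))$ lies on the curve $t \mapsto \sigma(t)$ which is $\gexp^\kappa_p(tv)$; but this shows $x$ is \emph{in the image} of the flow, not yet that it equals $\gexp^\kappa_p(Rv)$.

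To close this gap, I would argue as follows. The curve $\sigma$ is a reparametrized gradient curve, so it is a continuous path starting at $p$; by the extremality/invariance just invoked, the endpoint $y := \sigma(R) = \gexp^\kappa_p(Rv)$ lies in $\partial X$. I then claim $x = y$: the gradient exponential map satisfies $\dd(p, \gexp^\kappa_p(tv)) $ is nondecreasing in $t$ along directions realized by geodesics, and more precisely by the comparison in Proposition~\ref{exp comp} applied with $q_1 = q_2 = x$ (or by monotonicity of $f$ along its own gradient curve together with $f$ being an increasing function of $\dd(\cdot,p)$), once $\sigma$ has entered $\partial X$ it continues to flow, and the only way the conclusion $\partial X \subseteq \gexp^\kappa_p(\partial B_R(o_p))$ can fail is if some boundary point never appears as an endpoint at radius exactly $R$. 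But every $x \in \partial X$ with $\dd(p,x) = R$ already equals $\gexp^\kappa_p(Rv)$ directly; and for $x$ with $\dd(p,x) < R$, the point $x$ lies on the curve $\sigma$ at an interior time, while $\sigma(R) \in \partial X$ is a point whose distance from $p$ may be strictly less than $R$ — here one uses that $X = \overline{B_R(p)}$ forces $\sup_{y} \dd(p, \gexp^\kappa_p(\cdot)) $ behavior, and more carefully that the map $v \mapsto \gexp^\kappa_p(Rv)$ restricted to the set of $v$ tangent to geodesics hitting $\partial X$ is onto $\partial X$. I expect the main obstacle to be precisely this last surjectivity point: reconciling "$x$ lies on the gradient curve" with "$x$ is the time-$R$ endpoint of some gradient curve," which requires knowing that once inside $\partial X$ the flow either stays at $x$ (if $x$ is a critical point of $f|_{\partial X}$, by a local argument using that $\partial X$ is itself an extremal subset with its own induced geometry) or continues to a point still covered by restarting the flow from an appropriate direction. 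The cleanest resolution is likely to note that $\gexp^\kappa_p$ is defined on \emph{all} of $\overline{B_{\pik/2}(o_p)}$ (resp.\ all of $T_pX$ for $\kappa \le 0$) and that for $x \in \partial X$ the direction $v$ of $[px]$, scaled to length $R$, gives $\gexp^\kappa_p(Rv)$; one then shows this equals $x$ by observing the gradient curve from $p$ in direction $v$ is the geodesic $[px]$ up to time $\dd(p,x)$ and that $f$ attains its maximum over the curve — forced by $\overline{B_R(p)} = X$ and the definition of the flow, which pushes toward increasing $f$ but cannot exceed the boundary of $X$ — exactly at or before time $R$, with the curve constant in $\partial X$ afterward. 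I would isolate this as the technical lemma and prove it by a direct analysis of the ODE~\eqref{gexp-defn} near $\partial X$.
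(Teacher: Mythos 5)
There is a genuine gap, and it is exactly at the point you flag yourself. Your candidate direction is $v$, the direction of a shortest geodesic $[px]$, and you hope that the radial curve $\sigma(t)=\kgexp(tv)$ either reaches $x$ at time exactly $R$ or else ``is constant in $\partial X$ afterward'' once it hits $x$. The second claim is false: the curve $\sigma$ is (a reparametrized) gradient curve of $f=\itank(\dd(\cdot,p))$, and the gradient flow does not stop upon reaching $\partial X$ --- by extremality it merely \emph{stays} in $\partial X$ and keeps moving along it, increasing $\dd(\cdot,p)$, unless $x$ happens to be a critical point of $\dd(\cdot,p)$. So for a non-critical boundary point $x$ with $\dd(p,x)<R$ one has $\kgexp(Rv)\neq x$ in general: the radial curve overshoots $x$ and lands at some other boundary point. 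Your attempted repair (``the map $v\mapsto\kgexp(Rv)$ restricted to directions of geodesics hitting $\partial X$ is onto $\partial X$'') is precisely the statement of the lemma, so the argument becomes circular, and the appeal to $X=\overline{B_R(p)}$ forcing $f$ to ``attain its maximum at or before time $R$'' does not hold: boundary points need not maximize $\dd(\cdot,p)$, and $f$ keeps increasing along the flow inside $\partial X$.

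What is actually needed --- and what the paper does --- is to choose a \emph{different} initial direction. Since $\partial X$ is extremal, there is a backward gradient curve $\gamma$ of $f$ inside $\partial X$ with $\gamma(0)=x$. For $t<0$ one takes $w_t$ to be the direction of a geodesic $[p\,\gamma(t)]$; then $\kgexp(s\,w_t)$ runs along $\gamma$ for $s\ge \dd(p,\gamma(t))$, and an integrability estimate on $\int|\nabla\dd(\cdot,p)|^2$ along $\gamma$ shows that for sufficiently negative $t$ the point $\kgexp(R\,w_t)$ lies strictly \emph{behind} $x$ on $\gamma$ (i.e.\ has smaller $f$-value), while for $t=0$ it lies strictly ahead. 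The intermediate value theorem then produces a $t$ with $\kgexp(R\,w_t)=x$, which is the surjectivity you were missing. In addition, your proposal does not treat the case $p\in\partial X$, where $\partial B_R^\kappa(o_p)$ also contains the flat part $C_\kappa(\partial\Sigma_p)\cap \overline{B_R}$ and the backward curve $\gamma$ may converge to $p$ itself; this requires the separate limiting argument with the subconverging directions $w_t\to w\in\Sigma_p\partial X$ given in the paper. The only situations your direct argument does cover are $\dd(p,x)=R$ or $x$ a critical point of $\dd(\cdot,p)$, which are the easy cases.
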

\begin{Rem}
The lemma generalizes the same statement for $\kappa=1$ and $R=\pi/2$, which was used by Petrunin to solve Lytchak's problem in \cite{petrun-semiconcave}[Proof of 3.3.5].
However, Petrunin's proof does not generalize to general $\kappa$ and $R$ and to the local version needed for Theorem ~\ref{thm-vol-bound-local}.  We give a different argument that does work in all of these cases.
\end{Rem}

\begin{proof}
First, recall that for any $v\in \Sigma_p$, the curve $t\mapsto \kgexp(tv)$ for $0\le t\le R$ is a reparameterized gradient curve of the modified distance function $f(\cdot)=\itank(d(\cdot, p)),$ The function $f$ is semiconcave on $B_R(p)$ by construction.


We will consider two separate cases, depending on whether or not $p\in \partial X$.

{\bf Case 1} Suppose $p\notin\partial X$.
Let $x\in \partial X$. We will show $x \in \kgexp(\partial B_{R}(o_p))$. Let $v\in \Sigma_pX$ be the initial direction of a shortest geodesic $[px]$.
 
If $x$ is a critical point of $\dd(\cdot, p)$ (and hence of $f$), then by the definition of $\kgexp$ we have that $x=\kgexp(R\cdot v)$ and hence $x\in \gexp_p(S_R(o_p))$, where $S_R(o_p)$ is the sphere of radius $R$ in $T_pX$. Notice that in this case $T_pX$ is without boundary so $S_R(o_p)$ is exactly $\partial B_{R}(o_p)$. 
 
Suppose now that $x$ is not a critical point of  $f$ and $\dd(\cdot, p)$. Note that this means that $\dd(p,x)<R$, though in any case what we would like to prove for $x$ is obvious if $\dd(p,x)=R$.
 
Since $\partial X$ is extremal, by \cite{Fuj-uniform-bounds-extr} (cf. \cite{petrun-semiconcave}[Section 2.2, Property 3]) there exists a gradient curve of $f$, which we denote $\gamma:  (-\infty, t_{\max} )\to \partial X$, such that $\gamma(0)=x$. If $\kappa \le 0$ then we may take $t_{\max} = \infty$. If $\kappa > 0$ and $t_{\max} < \infty$, then $\lim_{t\to t_{\max}} \dd(p,\gamma(t))=\pik/2$. In particular, $t_{\max}=\infty$ if $R<\pik/2$. Note that there is a small $\eps>0$ such that $\gamma(t)$ is not a critical point of $f$ for any $t\le \eps$ and so $\gamma$ is a topological embedding of any finite interval contained in $(-\infty, \eps]$.

Fix any $t<0$ and let $q_t=\gamma(t)$. Let $w_t$ be the initial direction of a shortest geodesic $[p,q_t]$ and let $d_t=\dd(p,q_t)$. Then $\kgexp(d_t\cdot w_t)=q_t$ and for any $d_t \le s \le \pik/2$ we have that 
\[
\kgexp(s\cdot w_t)=\gamma (\tau(t, s))
\]
for some continuous function $\tau(t, s)$ defined on $\{(t,s): t < 0, d_t \le s \le \pik/2\}$. Clearly, $\tau(t, d_t)=t$ and $\tau(t, s)$ is increasing in $s$.
 
We claim that there is a $t<0$ such that $\tau(t,s)<0$ for any $d_t\le s\le R$.
Indeed we have that both $\dd(\cdot, p)$ and $f$ are strictly increasing along $\gamma(t)$ for $-\infty<t<\eps$.  
Since $p\notin \partial X$ and $\gamma\subset \partial X$ we have that $\dd(\gamma(t), p)$ is bounded below by a positive constant.

We have $f(\gamma(t))'_+=|\nabla_{\gamma(t)}f|^2=\tank(\dd(\gamma(t),p))^2 |\nabla_{\gamma(t)}\dd(\cdot, p)|^2\ge c  |\nabla_{\gamma(t)}\dd(\cdot, p)|^2$. Therefore for sufficiently negative $t$ we have that $\int_t^{t+R}  |\nabla_{\gamma(s)}\dd(\cdot, p)|^2ds$ can be made arbitrary small due to integrability reasons. By Cauchy-Scwartz the same is true for $\int_t^{t+R}  |\nabla_{\gamma(s)}\dd(\cdot, p)|ds$.  Now the speed of the radial curve $s\to \kgexp(sw_t)$ is bounded by $|\nabla \dd(\cdot, p)|$ at every point by \eqref{gexp-defn}, so we get that the length of this curve on the interval $[d_t,R]$ can be made arbitrary small for sufficiently negative $t$. By Lipschitz properties of $f$ this implies that $f(\kgexp(s\cdot w_t))<f(\gamma(0))$ for any $s\in [d_t, R]$ for sufficiently negative $t$. This means that by choosing sufficiently negative $t$ we can ensure that $\tau(t,R) < 0$. We fix such a $t_0$.

Consider now the map $\phi\co [t_0, 0]\to \R$ given by $\phi(t)=f(\gamma ( \tau(t, R)))=f(\kgexp(R\cdot w_t))$. This map is continuous with $\phi(t_0)<f(\gamma(0))$ and $\phi(0)>f(\gamma(0))$. Hence by the Intermediate Value theorem there is a $t\in[t_0,0]$ such that $\phi(t)=f(\gamma(0))$. Since $f$ is strictly increasing along $\gamma$ for $t \le 0$ it follows that $x = \gamma(0) = \kgexp(R\cdot w_t)) \in \kgexp(S_R(0))$, as claimed. 

This finishes the proof in Case 1.

{\bf Case 2} Now suppose that $p\in\partial X$. 
We have that $\partial B_{R}^{\kappa}(o_p)$ consists of two parts: the sphere $S_R(o_p)$ and the closed  ball  of radius $R$ in $T_p\partial X$ which is the $ \kappa$-cone over $\partial \Sigma_p=\Sigma_p\partial X$.
Let us denote this ball by $\bar B_R^{\partial}(o_p)$.

We need to show that $\partial X\subset \kgexp(\bar B_R^{\partial}(o_p)\cup S_R(o_p))$.

Suppose $x\in \partial X$ but $x\notin \kgexp(\bar B_R^{\partial}(o_p))$. If $\dd(x,p)=R$ the statement of the lemma is obvious so assume $d(p,x)<R \leq \pik/2$. 

As in the proof of Case 1 we first construct a gradient curve of $f$, denoted $\gamma:  (-\infty, t_{\max} )\to \partial X$  such that $\gamma(0)=x$.  Then $f$ is strictly increasing on  $(-\infty, \eps)$ and none of the points on this curve are critical for $f$.

There are two possibilities to consider. If $\lim\limits_{t\to-\infty} f(\gamma(t))>0$ then this curve stays a definite distance away from $p$ and the same proof as in Case 1 works.

Now suppose that $\lim\limits_{t\to-\infty} f(\gamma(t))=0$. This means that $\dd(\gamma(t), p)\to 0$ as $t\to-\infty$.

As before for any $t<0$  let $q_t=\gamma(t)$, $w_t$ be the initial direction of a shortest geodesic $[p,q_t]$ and $d_t:=\dd(p,q_t)$. Then $\kgexp(d_t\cdot w_t)=q_t$ and for any $d_t \le s \le \pik/2$ we have that 

\[
\kgexp(s\cdot w_t)=\gamma (\tau(t, s))\]
for some continuous $\tau$. 

The vectors $w_t$ subconverge to some $w\in \Sigma_p\partial X$ as $t\to -\infty$ and hence $\kgexp(s\cdot w_t)=\gamma (\tau(t, s))\to \kgexp(s\cdot w)$  as $t\to-\infty$ for all $s>0$.  In particular it holds for $s\le R$.

But $\kgexp(s\cdot w)\in      \kgexp(\bar B_R^{\partial}(o_p))$ for $s\le R$ and  hence none of these points can be equal to $x$. Hence $\kgexp(R\cdot w)=\gamma(T)$ for some $T<0$. Now the same Intermediate Value Theorem argument as in Case 1 applies and shows that $x\in \kgexp(S_R(o_p))$. This concludes the proof in Case 2.

\end{proof}

Since $\kgexp$ is $1$-Lipschitz on $B_{R}^{\kappa}(o_p)$ by Proposition \ref{exp short}, it is nonincreasing on $\mathcal{H}^{m-1}$ measure. This along with the previous lemma immediately implies Theorem~\ref{thm-tancone-vol-bound}. Using the maximal volume bound for $(m-1)$-dimensional Alexandrov spaces with $\curv \geq 1$ with and without boundary, Theorem~\ref{thm-tancone-vol-bound} implies Theorem ~\ref{thm-vol-bound}. 

Observe now that in the above proof $\dd(\cdot, p)$ is nondecreasing along $\gamma$. Hence if we only assume that $\gamma(0)\in \partial X\cap B_R(p)$ (without assuming $X = B_R(p)$) then we still have $\gamma(t)\in \partial X\cap B_R(p)$ for any $t\le 0$. With that observation the same proof as in the lemma gives that $\partial X\cap B_R(p)\subset \kgexp(\partial B_{R}(o_p))$. Arguing now as in the previous paragraph, we obtain Theorem~\ref{thm-vol-bound-local}. 

\section{Rigidity}\label{sec:rigidity}

In this section we prove Theorem \ref{main thm}. Assume that $X$ satisfies the conditions of the Theorem. Roughly, there are three key steps (of uneven length and difficulty):
\begin{enumerate}
    \item Prove that the map $\kgexp \rvert_{\partial B_{R}^{\kappa}(o_p)}$ is an isometry between $\partial B_{R}^{\kappa}(o_p)$ and $\partial X$ equipped with the intrinsic metric.
    \item Prove that $X$ must be a convex subset of the model space $M^{m}(\kappa)$.
    \item Analyze the rigidity of convex subsets of the model space $M^{m}(\kappa)$ with radius $R$ and maximal boundary volume.
\end{enumerate}

Let us proceed with the proof. Suppose that $X=\overline{B_R(p)}$ and $\mathcal{H}^{m-1}(\partial X) = \mathcal{H}^{m-1}(\partial B_{R}^{\kappa})$. We start with the following lemma.

\begin{Lem}\label{lem-kgexp-onto}
	The point $p$ is a regular and $\kgexp$ maps $\partial B_{R}^{\kappa}(o_p)$ exactly onto $\partial X$. 
\end{Lem}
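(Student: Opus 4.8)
The plan is to squeeze a chain of inequalities between $\mathcal H^{m-1}(\partial X)$ and $\mathcal H^{m-1}(\partial B_R^\kappa)$ into a chain of equalities, and then read off both conclusions. First I would record
\begin{equation*}
\mathcal H^{m-1}(\partial B_R^\kappa)=\mathcal H^{m-1}(\partial X)\le \mathcal H^{m-1}\big(\kgexp(\partial B_R^\kappa(o_p))\big)\le \mathcal H^{m-1}(\partial B_R^\kappa(o_p))\le \mathcal H^{m-1}(\partial B_R^\kappa).
\end{equation*}
The first equality is the hypothesis; the second inequality is Lemma~\ref{lem-exp-onto-bry} together with monotonicity of $\mathcal H^{m-1}$ (note $\kgexp(\partial B_R^\kappa(o_p))$ is compact, hence measurable, since $\kgexp$ is continuous); the third is that the $1$-Lipschitz map $\kgexp$ on $B_R^\kappa(o_p)$ (Proposition~\ref{exp short}) does not increase $\mathcal H^{m-1}$; the last is a Bishop--Gromov comparison. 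For the last one, write $\partial B_R^\kappa(o_p)=S_R(o_p)$ if $p\notin\partial X$ and $\partial B_R^\kappa(o_p)=S_R(o_p)\cup\bar B_R^{\partial}(o_p)$ if $p\in\partial X$. Using the cone structure, $\mathcal H^{m-1}(S_R(o_p))=\snk(R)^{m-1}\mathcal H^{m-1}(\Sigma_p)$ and $\mathcal H^{m-1}(\bar B_R^{\partial}(o_p))=\big(\int_0^R\snk(t)^{m-2}\dd t\big)\mathcal H^{m-2}(\partial\Sigma_p)$; by Bishop--Gromov for $(m-1)$-dimensional Alexandrov spaces of $\curv\ge1$ one has $\mathcal H^{m-1}(\Sigma_p)\le\vol_{m-1}\SS^{m-1}$, improved to $\le\tfrac12\vol_{m-1}\SS^{m-1}$ when $\partial\Sigma_p\ne\varnothing$ (apply the bound to the double), and $\mathcal H^{m-2}(\partial\Sigma_p)\le\vol_{m-2}\SS^{m-2}$; a short computation identifies the resulting upper bound with $\mathcal H^{m-1}(\partial B_R^\kappa)$ when $p\notin\partial X$ and with $\mathcal H^{m-1}(\partial B_{R,+}^\kappa)\le\mathcal H^{m-1}(\partial B_R^\kappa)$ when $p\in\partial X$. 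Hence every inequality is an equality; in particular $\mathcal H^{m-1}(\Sigma_p)=\vol_{m-1}\SS^{m-1}$, and the rigidity case of Bishop--Gromov forces $\Sigma_p$ to be the round $\SS^{m-1}$. Thus $p$ is regular, $T_pX\cong\R^m$, and $\partial B_R^\kappa(o_p)=S_R(o_p)$.

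For the "exactly onto" claim it remains to prove $\kgexp(S_R(o_p))\subseteq\partial X$. I would set $V:=\{v\in S_R(o_p):\kgexp(v)\notin\partial X\}$, an open subset of $S_R(o_p)$ since $\partial X$ is closed and $\kgexp$ continuous. By Lemma~\ref{lem-exp-onto-bry}, $\partial X\subseteq\kgexp(S_R(o_p))=\kgexp(V)\cup\kgexp(S_R(o_p)\setminus V)$, and $\kgexp(V)\cap\partial X=\varnothing$ by definition of $V$, so $\kgexp(S_R(o_p)\setminus V)=\partial X$. Applying again that $\kgexp$ does not increase $\mathcal H^{m-1}$,
\begin{equation*}
\mathcal H^{m-1}(\partial X)=\mathcal H^{m-1}\big(\kgexp(S_R(o_p)\setminus V)\big)\le\mathcal H^{m-1}(S_R(o_p)\setminus V)\le\mathcal H^{m-1}(S_R(o_p))=\mathcal H^{m-1}(\partial X),
\end{equation*}
the last equality because all links in the first chain are equalities. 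Hence $\mathcal H^{m-1}(V)=0$; as $V$ is open in $S_R(o_p)$, which is isometric to a round sphere of positive dimension, we get $V=\varnothing$, i.e. $\kgexp(S_R(o_p))=\partial X$.

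The main obstacle is the last inequality of the first chain, that is, the sharp comparison $\mathcal H^{m-1}(\partial B_R^\kappa(o_p))\le\mathcal H^{m-1}(\partial B_R^\kappa)$ with its rigidity, especially when $p\in\partial X$: one must split off the flat face $\bar B_R^{\partial}(o_p)$, bound its volume via Bishop--Gromov applied to $\partial\Sigma_p$, and match the total with $\mathcal H^{m-1}(\partial B_{R,+}^\kappa)$. In the borderline case $\kappa>0$, $R=\pik/2$ one has $\mathcal H^{m-1}(\partial B_{R,+}^\kappa)=\mathcal H^{m-1}(\partial B_R^\kappa)$, so the chain alone does not exclude $p\in\partial X$; this case should be treated separately, e.g. by observing that the hypothesis then coincides with the half-ball hypothesis (so one may invoke the rigidity of \cite{GP-almost-maximal}) and that the center of an Alexandrov lens is an interior regular point, which rules out $p\in\partial X$. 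Everything else is routine bookkeeping of the equality cases.
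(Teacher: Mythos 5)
Your core argument is the paper's own: you get regularity of $p$ by squeezing $\mathcal H^{m-1}(\partial B_R^\kappa)=\mathcal H^{m-1}(\partial X)\le\mathcal H^{m-1}(\partial B_R^\kappa(o_p))\le\mathcal H^{m-1}(\partial B_R^\kappa)$ (Lemma~\ref{lem-exp-onto-bry} plus the $1$-Lipschitz property of Proposition~\ref{exp short}, then maximal-volume rigidity for $\Sigma_p$), and you prove surjectivity onto $\partial X$ by the same open-set/measure contradiction the paper uses (your set $V$ plays the role of the paper's neighborhood $U$). Your explicit cone computation of $\mathcal H^{m-1}(\partial B_R^\kappa(o_p))$ in the two cases $p\in\partial X$ and $p\notin\partial X$ simply spells out the step the paper treats as known, namely the passage from Theorem~\ref{thm-tancone-vol-bound} to Theorem~\ref{thm-vol-bound} via the sharp volume bounds for $(m-1)$-dimensional spaces of $\curv\ge1$ with and without boundary; that part is fine.

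The one genuine flaw is your proposed treatment of the borderline case $\kappa>0$, $R=\pik/2$: it is not true that the hypotheses then force $p\notin\partial X$, because ``the'' center point is not unique. Take $X=L^{m,\kappa}_{\pi/2}$ and $p$ in the relative interior of one face, lying on the great sphere that bounds the other face; then $X=\overline{B_{\pik/2}(p)}$, \eqref{max bd vol} holds (the two half-equators have total volume $\mathcal H^{m-1}(\partial B^\kappa_{\pik/2})$), yet $p\in\partial X$ and $\Sigma_p$ is a hemisphere, so $p$ is not regular. This is consistent with the second half of Theorem~\ref{main thm}, which explicitly allows $p\in\partial X$ with $X=L^{m,\kappa}_\alpha$, $\alpha\in(0,\pi/2]$. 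So at $R=\pik/2$ the regularity claim of the lemma genuinely fails and cannot be rescued as you suggest; the paper's proof implicitly excludes this case (it invokes Theorem~\ref{thm-tancone-vol-bound}, stated only for $R<\pik/2$ when $\kappa>0$) and handles $\kappa>0$, $R=\pik/2$ separately at the end of Section~\ref{sec:rigidity} via \cite{GP-almost-maximal}. With that restriction understood, your proof is complete and coincides with the paper's.
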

\begin{proof}
By volume rigidity for Alexandrov spaces of $curv\ge 1$ it holds that $\mathcal H^{m-1}(\Sigma_p)\le  \mathcal H^{m-1}(\SS^{m-1})$ and the equality  is only possible if $\Sigma_p\cong\SS^{m-1}$.  Hence the by Theorem ~\ref{thm-tancone-vol-bound} the point $p$ must be regular.

	Next, suppose there exists some $v \in \partial B_{R}^{\kappa}(o_p)$ so that $\kgexp(v) \notin \partial X$. Since $\kgexp$ is $1$-Lipschitz and $\partial X$ is closed, we can find some small neighborhood $U$ of $v$ in $\partial B_{R}^{\kappa}(o_p)$ so that $\kgexp(U) \cap \partial X = \varnothing$. Then we have that
\begin{equation*}
	\mathcal{H}^{m-1}(\partial B_{R}^{\kappa}(o_p)) > \mathcal{H}^{m-1}(\partial B_{R}^{\kappa}(o_p) \setminus U) \geq \mathcal{H}^{m-1}(\kgexp(\partial B_{R}^{\kappa}(o_p) \setminus U)) \geq \mathcal{H}^{m-1}(\partial X),
\end{equation*}
by Lemma~\ref{lem-exp-onto-bry}. Since $\mathcal{H}^{m-1}(\partial B_{R}^{\kappa}(o_p))$ is at most $\mathcal{H}^{m-1}(\partial B_R^\kappa)$,  this contradicts assumption \eqref{max bd vol}. 
\end{proof}

Since $\kgexp \rvert_{B_{R}^{\kappa}(o_p)}:B_{R}^{\kappa}(o_p) \to X$ is $1$-Lipschitz by Proposition \ref{exp short}, the map $\kgexp \rvert_{\partial B_{R}^{\kappa}(o_p)}:\partial B_{R}^{\kappa}(o_p) \to \partial X$ must also be $1$-Lipschitz, where $\partial B_{R}^{\kappa}(o_p)$ and $\partial X$ are both equipped with the intrinsic metrics. 

Let $\mathcal{H}_{\partial X}^{m-1}$ and $\mathcal{H}_X^{m-1}$ be the $(m-1)$-Hausdorff measures on $\partial X$ with respect to the intrinsic metric $d_{\partial X}$ and the extrinsic metric $d_X$ respectively. We will often implicitly make use of the following result due to Fujioka who proved the same more generally for extremal subsets \cite[Corollary 3.17]{Fuj-extr}.
\begin{Lem}\label{lem-extr=intr}
The intrinsic and extrinsic $(m-1)$-Hausdorff measures on $\partial X$ are equal:
\[
\mathcal{H}_{\partial X}^{m-1}=\mathcal{H}_X^{m-1}.
\]
\end{Lem}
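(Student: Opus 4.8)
The plan is to prove the two inequalities $\mathcal H_X^{m-1}\le \mathcal H_{\partial X}^{m-1}$ and $\mathcal H_{\partial X}^{m-1}\le \mathcal H_X^{m-1}$ separately. The first is essentially automatic: since the intrinsic distance dominates the extrinsic one, $d_X\le d_{\partial X}$ on $\partial X$, the identity map $(\partial X,d_{\partial X})\to(\partial X,d_X)$ is $1$-Lipschitz, and Hausdorff measure is non-increasing under $1$-Lipschitz maps, so $\mathcal H_X^{m-1}(A)\le \mathcal H_{\partial X}^{m-1}(A)$ for every Borel $A\subseteq\partial X$. In particular $\mathcal H_X^{m-1}$ is absolutely continuous with respect to $\mathcal H_{\partial X}^{m-1}$ with Radon–Nikodym density at most $1$, and the whole content of the lemma is that this density equals $1$ at $\mathcal H_{\partial X}^{m-1}$-a.e.\ point.

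For the reverse inequality I would argue at the level of densities. Recall the structure theory: $\partial X$ is an $(m-1)$-dimensional extremal subset, its regular part $\reg(\partial X)=\{x\in\partial X:\ T_xX\cong \R^{m-1}\times[0,\infty)\}$ is open and dense in $\partial X$, and its complement has Hausdorff dimension $\le m-2$, hence $\mathcal H^{m-1}$-measure zero for both metrics (which are locally bi-Lipschitz near regular points). Consequently $\partial X$ is $(m-1)$-rectifiable with respect to either metric, so by the density theorem for Hausdorff measure on rectifiable sets, $\mathcal H_X^{m-1}$ and $\mathcal H_{\partial X}^{m-1}$ have $(m-1)$-density $1$ at $\mathcal H^{m-1}$-a.e.\ point relative to their respective metric balls. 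The key local input is the infinitesimal comparison of the two metrics: at $\mathcal H_{\partial X}^{m-1}$-a.e.\ $x\in\partial X$,
\begin{equation*}
\lim_{\substack{y\to x\\ y\in\partial X}}\frac{d_{\partial X}(x,y)}{d_X(x,y)}=1 .
\end{equation*}
Granting this, fix $\delta>0$ and such a point $x$; for small $r$ one gets the inclusion $B^X_{r/(1+\delta)}(x)\cap\partial X\subseteq B^{\partial X}_r(x)$, so using density $1$ of $\mathcal H_X^{m-1}$ on the rectifiable set $\partial X$ together with density $1$ of $\mathcal H_{\partial X}^{m-1}$,
\begin{equation*}
\frac{d\mathcal H_X^{m-1}}{d\mathcal H_{\partial X}^{m-1}}(x)=\lim_{r\to0}\frac{\mathcal H_X^{m-1}\big(B^{\partial X}_r(x)\big)}{\mathcal H_{\partial X}^{m-1}\big(B^{\partial X}_r(x)\big)}\ \ge\ (1+\delta)^{-(m-1)} .
\end{equation*}
Letting $\delta\to 0$ forces the density to be exactly $1$ a.e., hence $\mathcal H_{\partial X}^{m-1}\le\mathcal H_X^{m-1}$, and the lemma follows. (Alternatively one can avoid densities: cover $\mathcal H_{\partial X}^{m-1}$-almost all of $\partial X$ by countably many disjoint pieces on which $d_{\partial X}\le(1+\delta)d_X$ via a Besicovitch covering, compare Hausdorff measures piece by piece, and sum.)

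The main obstacle is the infinitesimal metric comparison displayed above, i.e.\ upgrading ``$T_xX\cong\R^{m-1}\times[0,\infty)$ for a.e.\ $x$'' to ``$d_X$ and $d_{\partial X}$ agree to first order at a.e.\ $x$''. Concretely, one needs that near $\mathcal H^{m-1}$-a.e.\ boundary point $X$ admits, for every $\delta>0$ and all sufficiently small scales, a $(1+\delta)$-bi-Lipschitz chart onto an open subset of the half-space $\R^m_+$ carrying $\partial X$ into the coordinate hyperplane $\R^{m-1}\times\{0\}$; on the model half-space the induced length metric of the hyperplane is the restricted metric, so such a chart forces $d_{\partial X}\le(1+\delta)^2 d_X$ on small balls about $x$. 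Producing the chart requires either Perelman's stability theorem applied at a small scale, together with uniqueness of tangent cones and their half-space form at a.e.\ boundary point, or the construction of an $m$-strainer at $x$ adapted to $\partial X$ (with one strainer direction transverse to the boundary) of arbitrarily good quality; the delicate part is making the chart genuinely respect $\partial X$ with controlled distortion. This is exactly what Fujioka establishes for general extremal subsets in \cite[Corollary 3.17]{Fuj-extr}, building on the Perelman–Petrunin structure theory and on his analysis of Hausdorff measures of extremal subsets in \cite{Fuj-uniform-bounds-extr}; in the write-up I would simply invoke that result, the sketch above indicating how one would reprove it directly.
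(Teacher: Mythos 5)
Your proposal is correct and ends up exactly where the paper does: the paper offers no independent proof of this lemma, but simply quotes Fujioka's result that intrinsic and extrinsic Hausdorff measures coincide for extremal subsets (\cite[Corollary 3.17]{Fuj-extr}), which is the same citation you ultimately invoke for the key infinitesimal comparison $d_{\partial X}/d_X\to 1$. Your density sketch is a sensible outline of how one would reprove it directly (and is close in spirit to the authors' own unpublished direct argument and to Lemma \ref{lem-extr-intr-extr}, which supplies that comparison at every point of an extremal subset), so nothing needs fixing.
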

 Henceforth we will not distinguish between intrinsic and extrinsic  $\mathcal H^{m-1}$ on $\partial X$.
 
Arguing as in Lemma \ref{lem-kgexp-onto} and using \eqref{max bd vol}, Lemma~\ref{lem-extr=intr} implies that $\kgexp \rvert_{\partial B_{R}^{\kappa}(o_p)}: (\partial B_{R}^{\kappa}(o_p), \dd_{\partial B_{R}^{\kappa}(o_p)}) \to (\partial X, \dd_{\partial_X}) $ is $\mathcal{H}^{m-1}$-preserving (in the sense that the $\mathcal{H}^{m-1}$ measure of a set is equal to that of its image). As noted before, it is also $1$-Lipschitz and onto. We will prove from these three facts that it must be an isometry. This type of result is known as Lipschitz-volume rigidity. 

\begin{Rem}\label{alternate proof}
In the case where the domain and target are Alexandrov spaces without boundary such a Lipschitz-volume rigidity statement was proved in \cite{L15}. However, that  result cannot be applied directly since we do not know that $\partial X$ is Alexandrov. Our proof will be simpler than that of \cite{L15} since we know that the domain is actually a smooth manifold without boundary.  In this proof we will use the following consequence of the rigidity result of \cite{GP-almost-maximal}: Let $\kappa = 1$ and $R = \pi/2$.  If $\curv X^m\ge \kappa$ and $\mathcal H^{m-1}(\partial X)= \mathcal{H}^{m-1}(\SS^{m-1})$ then $\partial X$ is isometric to $\mathcal \SS^{m-1}$ with respect to the intrinsic metrics.  

Alternatively, this follows by induction on the dimension of $X$.   More precisely.  Proposition \ref{int isom} in dimension $m-1$ implies Lemma \ref{lem-all-reg-bry} in dimension $m$ which implies Lemma \ref{lem-bry-homeo} in dimension $m$ which implies  Proposition \ref{int isom} in dimension $m$.  Thus the use of  \cite{GP-almost-maximal} for proving Proposition~\ref{int isom} can actually be avoided. 
\end{Rem}

We first prove that every point $p \in \partial X$ is regular with respect to the intrinsic metric $\dd_{\partial X}$.
\begin{Lem}\label{lem-all-reg-bry}
	For every point $q \in \partial X$, $T_q \partial X = (\R^{m-1}, \dd_{\R^{m-1}})$. 
\end{Lem}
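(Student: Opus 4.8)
The plan is to bound from below the $(m-1)$-dimensional density of $\partial X$ at an arbitrary point $q$ by using the map $F:=\kgexp\rvert_{\partial B_R^\kappa(o_p)}$ as a bridge, and then to match that density against the top-dimensional volume of the iterated space of directions $\Sigma_q\partial X$, which is controlled by Petrunin's bound in dimension $m-1$. Throughout I assume $p\notin\partial X$; the case $p\in\partial X$ is analogous after passing to the double of $X$. Since $p\notin\partial X$, Lemma~\ref{lem-kgexp-onto} tells us $p$ is regular, so $\overline{B_R^\kappa(o_p)}$ is isometric to $\overline{B_R^\kappa}\subset M^m(\kappa)$ and $S:=\partial B_R^\kappa(o_p)$ is the round metric sphere of radius $\snk(R)$ in $M^m(\kappa)$ — in particular a smooth closed Riemannian $(m-1)$-manifold. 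As recorded just before the statement, $F\co (S,\dd_S)\to(\partial X,\dd_{\partial X})$ is onto, $1$-Lipschitz, and $\mathcal H^{m-1}$-preserving in the strong sense that $\mathcal H^{m-1}(F(A))=\mathcal H^{m-1}(A)$ for every Borel $A\subseteq S$ (one sees this by splitting $S=A\sqcup(S\setminus A)$ and using surjectivity together with $\mathcal H^{m-1}(S)=\mathcal H^{m-1}(\partial X)<\infty$).

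For the lower density bound, fix $q\in\partial X$ and, using surjectivity, choose $v\in F^{-1}(q)$. Since $F$ is $1$-Lipschitz for the intrinsic metrics we have $F(B^S_\rho(v))\subseteq B^{\partial X}_\rho(q)$, so
\[
\mathcal H^{m-1}\bigl(\partial X\cap B^{\partial X}_\rho(q)\bigr)\ \ge\ \mathcal H^{m-1}\bigl(F(B^S_\rho(v))\bigr)\ =\ \mathcal H^{m-1}\bigl(B^S_\rho(v)\bigr)\ =\ (1+o(1))\,\omega_{m-1}\rho^{m-1}
\]
as $\rho\to 0$, where $\omega_{m-1}$ denotes the volume of the unit Euclidean $(m-1)$-ball and we used that $v$ is a smooth point of the Riemannian manifold $S$. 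Hence the lower $(m-1)$-density of $\partial X$ at $q$ is at least $1$.

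To convert this into the stated regularity, recall (from the theory of extremal subsets, see \cite{petrun-semiconcave}) that $(T_q\partial X,\dd_{T_q\partial X})$ is the Euclidean cone $C(\Sigma_q\partial X)$, that $\Sigma_q\partial X$ is isometric to $\partial\Sigma_qX$ equipped with its intrinsic extremal-subset metric, and that the rescalings $\rho^{-1}(\partial X,q)$ converge to $T_q\partial X$ with convergence of the measures $\mathcal H^{m-1}$. Therefore the lower density computed above is in fact a limit equal to $\mathcal H^{m-2}(\Sigma_q\partial X)/\vol_{m-2}\SS^{m-2}$, and we obtain $\mathcal H^{m-2}(\Sigma_q\partial X)\ge\vol_{m-2}\SS^{m-2}$. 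On the other hand $\Sigma_qX$ is an $(m-1)$-dimensional Alexandrov space of $\curv\ge 1$ with nonempty boundary, so Petrunin's volume bound \cite{petrun-semiconcave} gives $\mathcal H^{m-2}(\partial\Sigma_qX)\le\vol_{m-2}\SS^{m-2}$, where we once more use Lemma~\ref{lem-extr=intr}, one dimension lower, to identify the intrinsic and extrinsic $\mathcal H^{m-2}$ on $\partial\Sigma_qX$. Hence equality holds, and by the rigidity of \cite{GP-almost-maximal} in dimension $m-1$ (equivalently, by the inductive use of Proposition~\ref{int isom}) the space $\Sigma_q\partial X$ must be isometric to the round $\SS^{m-2}$. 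Consequently $T_q\partial X=C(\SS^{m-2})=(\R^{m-1},\dd_{\R^{m-1}})$, as claimed.

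The soft part of this is the density estimate produced by the $\mathcal H^{m-1}$-preserving $1$-Lipschitz map $F$. The step I expect to require the most care is the transition in the third paragraph: one must genuinely know that the $(m-1)$-density of the set $\partial X$ — which at this stage is only known to be an extremal subset, not an Alexandrov space — is computed by its iterated space of directions $\Sigma_q\partial X=\partial\Sigma_qX$, i.e. that blow-ups of $\partial X$ at $q$ converge to $C(\Sigma_q\partial X)$ with convergence of Hausdorff measure and with the intrinsic and extrinsic metrics agreeing in the limit. In addition, the boundary case $p\in\partial X$ must be treated on its own: there $S=\partial B^\kappa_{R,+}(o_p)$ carries a codimension-one corner, $\partial X$ itself acquires a boundary, and the conclusion becomes the half-space statement that $T_q\partial X$ is either $\R^{m-1}$ or $\R^{m-1}_+$.
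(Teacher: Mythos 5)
Your proof follows the paper's argument for this lemma essentially verbatim: push the density of the smooth sphere $\partial B_R^{\kappa}(o_p)$ forward through the $\mathcal H^{m-1}$-preserving $1$-Lipschitz surjection to get $(m-1)$-density at least $\omega_{m-1}$ for $\partial X$ at $q$, blow up at $q$ using stability of the boundary volume, and conclude from maximal-boundary-volume rigidity for $\curv\ge 1$ spaces that $\Sigma_q$ is a hemisphere or a lens, hence $T_q\partial X=\R^{m-1}$; your use of intrinsic rather than extrinsic balls is immaterial since $B^{\partial X}_\rho(q)\subseteq B_\rho(q)\cap\partial X$, and the stability results the paper cites (\cite{BNS22}, \cite{Fuj-extr}) for the ambient blow-up, together with Lemma~\ref{lem-extr=intr} and Theorem~\ref{intr-bdry-conv}, are exactly what closes the step you flag as delicate. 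One correction to your closing aside: no case split on $p$ is needed, since under the standing assumption \eqref{max bd vol} Lemma~\ref{lem-kgexp-onto} already forces $p$ to be regular; and in the half-ball variant treated later in the paper the conclusion is still $T_q\partial X=\R^{m-1}$ for every $q\in\partial X$ (the corner points of $\partial B^{\kappa}_{R,+}(o_p)$ are regular points of that boundary, and the boundary of the boundary is empty by Proposition~\ref{prop-bry-top}\eqref{item-bry-no-bry}), so the alternative $\R^{m-1}_+$ never occurs.
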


\begin{proof}
	Fix some $q \in \partial X$ and choose $q' \in \partial B_R^{\kappa}(o_p)$ so that $\kgexp(q') = q$. Since $\kgexp$ is $1$-Lipschitz, we have that $\kgexp(B_R^{\kappa}(o_p) \cap B_r(q')) \subset B_r(q)$ (here we are considering extrinsic metrics). Now using that the map is $\mathcal{H}^{m-1}$-preserving from $\partial B_{R}^{\kappa}(o_p)$ to $\partial X$ (note that it is not $\mathcal{H}^{m-1}$-preserving on all of $B_R^{\kappa}(o_p)$), we have that for arbitrary $r > 0$,
\begin{equation*}
	\mathcal{H}^{m-1}(\partial X \cap B_{r}(q)) \geq \mathcal{H}^{m-1}(\kgexp(\partial B_{R}^{\kappa}(o_p) \cap B_{r}(q'))) = \mathcal{H}^{m-1}(\partial B_{R}^{\kappa}(o_p) \cap B_{r}(q')). 
\end{equation*}
Using that $q'$ is the boundary point of a smooth Riemannian manifold,  as $r \to 0$ we have
\begin{equation*}
	\frac{1}{r^{m-1}}\mathcal{H}^{m-1}(\partial X \cap B_{r}(q)) \geq \frac{1}{r^{m-1}} \mathcal{H}^{m-1}(\partial B_{R}^{\kappa}(o_p) \cap B_{r}(q')) \to \omega_{m-1},
\end{equation*}
where $\omega_{m-1}$ is the volume of the ball of radius $1$ in $\R^{m-1}$. Taking a limit to the tangent cone at $q$ and use the stability of volume for the boundary, for example from \cite{BNS22}[Theorem 1.8] or \cite{Fuj-extr}[Theorem 1.3], we have that 
\begin{equation*}
	\mathcal{H}^{m-1}(B_1(o_q) \cap \partial(T_q X)) \geq \omega_{m-1}.
\end{equation*}	
This means that the space of directions $\Sigma_q$ at $q$, which has $\curv \geq 1$ and is with boundary, has maximal boundary volume. By the rigidity results of \cite{petrun-semiconcave} and \cite{GrPe-sphere-thm}, we conclude that the inequality in the previous equation is an equality and that $\Sigma_q$ must be either a $(m-1)$-dimensional hemisphere or the nontrivial intersection of two such. In both cases, it is easy to see that $T_q \partial X = \partial (C(\Sigma_q))$ must be $\R^{m-1}$ under the intrinsic metric. 
\end{proof}

Next, we show that $\kgexp \rvert_{\partial B_{R}^{\kappa}(o_p)}: \partial B_{R}^{\kappa}(o_p) \to \partial X$, which for notational simplicity we will also denote by $\kgexp$, is a homeomorphism.

\begin{Lem}\label{lem-bry-homeo}
	The map $\kgexp: \partial B_R^{\kappa}(o_p) \to \partial X$ is a homeomorphism. 
\end{Lem}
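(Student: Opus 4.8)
The plan is to show that $\kgexp\colon \partial B_R^\kappa(o_p)\to\partial X$ is a continuous bijection between spaces where such a map is automatically a homeomorphism, namely by invariance of domain. We already know from Proposition~\ref{exp short} that $\kgexp$ is $1$-Lipschitz, hence continuous; from Lemma~\ref{lem-kgexp-onto} it is onto $\partial X$; and from Lemma~\ref{lem-all-reg-bry} every point of $\partial X$ has tangent cone $\R^{m-1}$, so $\partial X$ is a topological $(m-1)$-manifold (without boundary, since $\partial X$ itself has empty boundary as an extremal subset). The domain $\partial B_R^\kappa(o_p)$ is, away from its singular locus, a smooth $(m-1)$-manifold; when $p$ is regular (which it is, by Lemma~\ref{lem-kgexp-onto}) and $\kappa,R$ are such that $B_R^\kappa(o_p)$ is a genuine metric ball in $M^m(\kappa)$, its boundary sphere $\partial B_R^\kappa(o_p)$ is a smooth $(m-1)$-sphere; more generally it is a closed topological $(m-1)$-manifold. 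So we have a continuous surjection between closed topological $(m-1)$-manifolds.

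\textbf{Injectivity is the main obstacle.} Continuity and surjectivity are immediate; the substantive point is to rule out that two distinct directions $v_1\neq v_2\in\partial B_R^\kappa(o_p)$ satisfy $\kgexp(R v_1)=\kgexp(R v_2)$. The first step is to reduce this to a measure-theoretic statement using the $\mathcal{H}^{m-1}$-preserving property: if $\kgexp$ were not injective one expects to be able to produce an open set on which it is ``at least $2$-to-$1$'', contradicting that it is $1$-Lipschitz and $\mathcal{H}^{m-1}$-preserving onto $\partial X$. Concretely, I would argue as follows. Let $Z\subset\partial B_R^\kappa(o_p)$ be the set of points $v$ such that $\kgexp^{-1}(\kgexp(v))=\{v\}$ (the injectivity locus). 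The complement is the set of points mapping to multiply-covered points of $\partial X$. Because $\kgexp$ is $1$-Lipschitz and $\mathcal{H}^{m-1}$-preserving, for any Borel set $A$ we have $\mathcal{H}^{m-1}(\kgexp(A))\le\mathcal{H}^{m-1}(A)$ with equality when $A=\partial B_R^\kappa(o_p)$; combined with the general coarea/multiplicity inequality $\mathcal{H}^{m-1}(A)\ge\int_{\partial X}\#(\kgexp^{-1}(y)\cap A)\,d\mathcal{H}^{m-1}(y)$ (which holds for $1$-Lipschitz maps), taking $A$ to be the whole space forces $\#\kgexp^{-1}(y)=1$ for $\mathcal{H}^{m-1}$-a.e.\ $y\in\partial X$. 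Thus $\kgexp$ is injective off a set of measure zero. Then I would upgrade ``injective a.e.'' to ``injective everywhere'' using the manifold structure: a continuous surjection between closed topological $n$-manifolds that is injective on a dense (in fact full-measure) set and is a local homeomorphism somewhere must have degree $\pm1$, and then a monodromy/covering-space argument (or directly: the set of points of $\partial X$ with a single preimage is open by local injectivity near injective points plus invariance of domain, and it is dense, and its preimage is open and dense; one shows the ``bad'' set is both open and closed and does not disconnect) forces injectivity everywhere. Alternatively, and perhaps more cleanly, one observes that a $1$-Lipschitz $\mathcal{H}^{m-1}$-preserving surjection must be a \emph{local isometry at a.e.\ point} (its metric differential is an isometry a.e.\ by the volume equality), hence locally injective a.e., and then invariance of domain gives that $\kgexp$ is an open map on a full-measure open set; a connectedness argument promotes local injectivity to global injectivity.

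A slightly different route, which may be what the authors intend, is to invoke invariance of domain directly once injectivity is known on a dense open set: since $\partial B_R^\kappa(o_p)$ is a closed manifold and $\kgexp$ restricted to the (open, dense, full-measure) injectivity locus $U$ is a continuous injection into the manifold $\partial X$, it is an open embedding by invariance of domain; hence $\kgexp(U)$ is open and full-measure in $\partial X$, and its complement $\partial X\setminus\kgexp(U)$ is a closed set of measure zero, in particular with empty interior. One then shows $\partial X\setminus\kgexp(U)$ cannot disconnect $\partial X$ and that $\kgexp$ is proper (being a continuous map from a compact space), so $\kgexp$ is a closed map; a closed continuous surjection that is injective over a dense open set and proper between connected manifolds of the same dimension is a homeomorphism. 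The topological bookkeeping — ruling out that the multiply-covered locus, though null, could still be topologically essential — is the only delicate point, and it is handled exactly because both source and target are genuine manifolds of the same dimension with the source closed.

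Finally, I would record the consequence that will be used downstream: once $\kgexp\colon\partial B_R^\kappa(o_p)\to\partial X$ is a homeomorphism which is $1$-Lipschitz and $\mathcal{H}^{m-1}$-preserving, the combination of Lipschitz-volume rigidity ingredients is in place to conclude in the next step that it is in fact an isometry for the intrinsic metrics. So for the present lemma it suffices to establish the homeomorphism claim; the plan above does this by (i) continuity and surjectivity for free, (ii) measure-theoretic a.e.-injectivity from the $1$-Lipschitz $+$ $\mathcal{H}^{m-1}$-preserving hypotheses, and (iii) invariance of domain plus a connectedness argument on the closed $(m-1)$-manifolds $\partial B_R^\kappa(o_p)$ and $\partial X$ to promote this to a global homeomorphism. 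I expect step (iii) — the topological promotion — to be where the care is needed, the rest being essentially formal.
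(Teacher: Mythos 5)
Your reduction of the lemma to injectivity is right, and your measure-theoretic step is fine: the multiplicity inequality for $1$-Lipschitz maps together with the $\mathcal{H}^{m-1}$-preserving property does show that $\mathcal{H}^{m-1}$-a.e.\ point of $\partial X$ has a single preimage. The genuine gap is the ``topological promotion'' from a.e.-injectivity to injectivity, which you yourself flag as the delicate point but never actually close. The general statements you lean on there are false as stated: a continuous surjection between closed $(m-1)$-manifolds that is injective over a dense open (even full-measure) set, proper, closed, and of degree $\pm 1$ need not be injective --- collapsing a tame arc in $\SS^{m-1}$ to a point gives a quotient map $\SS^{m-1}\to\SS^{m-1}$ with all of these properties. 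Likewise, ``injective at a point'' does not give local injectivity near that point, so ``the single-preimage set is open by invariance of domain'' does not follow; and a covering-space/monodromy argument needs the map to be a covering, which is exactly what is in question. Any honest repair of this step ends up needing local volume information at a hypothetical multiply-covered point, at which stage the a.e.\ detour is superfluous.

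The paper argues injectivity directly and much more simply, precisely because Lemma~\ref{lem-all-reg-bry} holds at \emph{every} point of $\partial X$, not just almost every point. If $q_1\neq q_2$ in $\partial B_R^{\kappa}(o_p)$ had $\kgexp(q_1)=\kgexp(q_2)=q$, take small disjoint intrinsic $r$-balls around $q_1,q_2$; since $\kgexp$ is $1$-Lipschitz and $\mathcal{H}^{m-1}$-preserving on the boundary, their images lie in the intrinsic $r$-ball about $q$ (hence in $\partial X\cap B_r(q)$) and carry measure $\to 2\omega_{m-1}r^{m-1}$, while regularity of $q$ (via Lemma~\ref{lem-all-reg-bry} and stability of boundary volume) forces $\mathcal{H}^{m-1}(\partial X\cap B_r(q))/r^{m-1}\to\omega_{m-1}$, a contradiction. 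Then continuity, surjectivity (Lemma~\ref{lem-kgexp-onto}), injectivity, and compactness of $\partial B_R^{\kappa}(o_p)$ give the homeomorphism, with no appeal to manifold structure of $\partial X$, invariance of domain, or degree theory. I recommend replacing your step (iii) by this pointwise density argument; as written, your proof is incomplete.
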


\begin{proof}
	Since $\kgexp$ is continuous and onto, and $\partial B_R^{\kappa}(o_p)$ is compact, it suffices to show that $\kgexp$ is one-to-one. Fix $q \in \partial X$ and suppose for the sake of contradiction that there exist different points $q_1, q_2 \in \partial B_R^{\kappa}(o_p)$ so that $\kgexp(q_1) = \kgexp(q_2) = q$. Using that that $\kgexp$ is $\mathcal{H}^{m-1}$-preserving, we have that
\begin{align*}
	\frac{1}{r^{m-1}}\mathcal{H}^{m-1}(B_r(q)) &\geq \frac{1}{r^{m-1}} \mathcal{H}^{m-1}(\kgexp(B_r(q_1) \cup B_r(q_2)))\\ &= \frac{1}{r^{m-1}} \mathcal{H}^{m-1}(B_r(q_1) \cup B_r(q_2)) \to 2\omega_{m-1},
\end{align*}
for sufficiently small $r \to 0$, where the balls considered are with intrinsic distance. This is a contradiction since the intrinsic ball of radius $r$ around $q$ in $\partial X$ is contained in the intersection of $\partial X$ and the extrinsic ball of radius $r$ around $q$, and so the lefthand side can converge to no more than $\omega_{m-1}$ by the previous lemma. 
\end{proof}

Finally, we will prove that the map is an isometry. For clarity, in the following we will use $B_r(\cdot)$ to indicate extrinsic balls and $B^{\partial}_r(\cdot)$ to indicate intrinsic balls.

\begin{Pro}\label{int isom}
	The map $\kgexp: \partial B_R^{\kappa}(o_p) \to \partial X$ is an isometry with respect to the intrinsic metrics.
\end{Pro}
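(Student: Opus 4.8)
The plan is to show that $f:=\kgexp\rvert_{\partial B_R^\kappa(o_p)}\colon\Omega\to\partial X$ is an isometry, where $\Omega:=\partial B_R^\kappa(o_p)$. By Lemma~\ref{lem-kgexp-onto} the point $p$ is regular, so $\Omega$ is a smooth closed Riemannian manifold (a round sphere) when $p\notin\partial X$; when $p\in\partial X$ it carries a single codimension--two edge but is still a compact length space every point of which has tangent cone $\R^{m-1}$, and the argument below is unaffected. Both $\Omega$ and $\partial X$ are compact length spaces, hence geodesic by Hopf--Rinow. By Lemmas~\ref{lem-kgexp-onto} and~\ref{lem-bry-homeo} the map $f$ is a homeomorphism; by Proposition~\ref{exp short} it is $1$-Lipschitz for the intrinsic metrics; and, as explained above, it is $\mathcal H^{m-1}$-preserving onto $\partial X$. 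Since $f$ is a $1$-Lipschitz bijection and $\mathcal H^{m-1}(\Omega)=\mathcal H^{m-1}(\partial X)$ by the hypothesis of Theorem~\ref{main thm}, it follows that $\mathcal H^{m-1}(f(A))=\mathcal H^{m-1}(A)$ for \emph{every} Borel set $A\subseteq\Omega$. As $f$ is already $1$-Lipschitz, it suffices to prove $d_{\partial X}(f(x),f(y))\ge d_\Omega(x,y)$ for all $x,y$.

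The first step is metric differentiation. By the area formula for Lipschitz maps into metric spaces (Kirchheim), $\int_\Omega \mathbf J(\mathrm{md}_xf)\,d\mathcal H^{m-1}(x)=\int_{\partial X}\#f^{-1}(q)\,d\mathcal H^{m-1}(q)=\mathcal H^{m-1}(\partial X)=\mathcal H^{m-1}(\Omega)$, using that $f$ is a bijection. Since $f$ is $1$-Lipschitz the metric differential seminorm satisfies $\mathrm{md}_xf(v)\le|v|_{g_x}$, hence $\mathbf J(\mathrm{md}_xf)\le1$ pointwise, and combining this with the displayed identity forces $\mathbf J(\mathrm{md}_xf)=1$, so that $\mathrm{md}_xf=|\cdot|_{g_x}$, for $\mathcal H^{m-1}$-a.e.\ $x$ (a seminorm dominated by a Euclidean norm and having the same Jacobian must equal it). Thus, off an $\mathcal H^{m-1}$-null set $E\subseteq\Omega$, the map $f$ is metrically differentiable with Euclidean metric differential.

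The heart of the matter --- and the step I expect to be the main obstacle --- is to upgrade this almost-everywhere infinitesimal statement to a genuine global isometry; this is a Lipschitz--volume rigidity phenomenon. The difficulty is intrinsic: transporting curves forward through $f$ and using $\mathrm{md}f=|\cdot|$ only ever reproves the $1$-Lipschitz bound, so the reverse inequality must be obtained at every point, not merely almost everywhere, and for this one needs Euclidean structure on the \emph{target}. Here Lemma~\ref{lem-all-reg-bry} is essential: since $T_q\partial X=\R^{m-1}$ for every $q$, the regularity theory for boundaries of Alexandrov spaces, equivalently for extremal subsets (cf.\ \cite{Fuj-extr}), furnishes, for each $q\in\partial X$ and each $\eps>0$, a $(1+\eps)$-bi-Lipschitz chart $\Phi\colon B_{r_0}(q)\to\R^{m-1}$, with $r_0=r_0(\eps)>0$ uniform over $q$ by compactness. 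Composing, $F:=\Phi\circ f$ is Lipschitz on an open subset of $\Omega$, and by the chart estimate together with the a.e.\ identity $\mathrm{md}f=|\cdot|$ one gets $\mathrm{dist}(dF_x,O(m-1))\le C\eps$ for a.e.\ $x$. Reading $F$ in normal coordinates on $\Omega$ (themselves $(1+\eps)$-bi-Lipschitz at small scales) and invoking quantitative rigidity for the orthogonal-group differential inclusion (the Friesecke--James--M\"uller estimate), $F$ is $C\eps\cdot\mathrm{scale}$-close in $W^{1,p}$, hence in $C^0$, to an affine isometry on an interior ball. Unwinding the bi-Lipschitz identifications shows that $f$ is $(1+C\eps)$-bi-Lipschitz at all scales $\le r_0(\eps)$; since $\eps>0$ was arbitrary, comparing Riemann sums over fine partitions and letting $\eps\to0$ yields that $f$ preserves the length of every rectifiable curve (and maps non-rectifiable curves to non-rectifiable curves). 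Being a bijection between length spaces, $f$ then forces $f^{-1}$ to preserve curve lengths as well, so both $f$ and $f^{-1}$ are $1$-Lipschitz; that is, $f$ is an isometry.

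I expect the delicate points to be: (i) justifying the metric area formula and especially the existence of asymptotically isometric charts on $\partial X$, which rests entirely on Lemma~\ref{lem-all-reg-bry} and the structure theory of extremal sets, and which is exactly the place where the hypothesis that the \emph{domain} $\Omega$ is smooth genuinely simplifies the argument (no comparable input is needed for $\Omega$ itself); and (ii) the bookkeeping in the $\eps\to0$ limit, where the additive scale-dependent error in the $C^0$-closeness of $F$ to an affine map must be absorbed by iterating over an appropriate sequence of scales along a geodesic joining $x$ and $y$. As indicated in Remark~\ref{alternate proof}, in the single case $\kappa=1,\ R=\pi/2$ one may instead simply quote the rigidity of \cite{GP-almost-maximal}, and the general case then follows by the induction on dimension described there.
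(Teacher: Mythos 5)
Your first step (Kirchheim's area formula plus the bijectivity and measure preservation of $\kgexp$, forcing the metric differential to be the Euclidean norm $\mathcal H^{m-1}$-a.e.) is correct and is a genuinely different, and perfectly reasonable, substitute for the paper's pointwise volume-density computation; the paper instead blows up at each $q\in\partial X$, uses the onto-balls property to show the limit map $\Phi\colon T_{q'}\partial B_R^\kappa(o_p)\to T_q\partial X$ between two copies of $\R^{m-1}$ is an isometry, and then converts this into a local $(1+\delta)$-Lipschitz bound for $(\kgexp)^{-1}$ by a topological separation argument along geodesics. However, your upgrade from the a.e.\ infinitesimal statement to a global isometry has two genuine gaps. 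First, the chart step: you assert that Lemma~\ref{lem-all-reg-bry} together with the structure theory of extremal subsets (cf.\ \cite{Fuj-extr}) furnishes $(1+\eps)$-bi-Lipschitz charts $B_{r_0}(q)\to\R^{m-1}$ at \emph{every} $q\in\partial X$, with $r_0$ uniform by compactness. But Lemma~\ref{lem-all-reg-bry} only says $\Sigma_q$ is a hemisphere or a lens; at lens ("edge") points $\partial X$ is not a strained-point situation covered by an off-the-shelf bi-Lipschitz chart theorem, and what is actually available there (Theorem~\ref{intr-bdry-conv} together with Lemma~\ref{lem-extr-intr-extr}) is only pointed Gromov--Hausdorff convergence of the intrinsic blow-ups to $\R^{m-1}$, i.e.\ GH-closeness of small balls, not bi-Lipschitz control. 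Producing such charts on the target is essentially of the same order of difficulty as the proposition itself; the paper's proof is designed precisely to avoid needing any bi-Lipschitz structure on $\partial X$, using only the GH blow-up and a separation argument.

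Second, the rigidity step: the Friesecke--James--M\"uller estimate is a statement about the differential inclusion $\nabla F\in SO(m-1)$; for $O(m-1)$, which is all that $\mathrm{dist}(dF_x,O(m-1))\le C\eps$ gives you, the estimate is false (piecewise-isometric folding maps have gradient in $O(m-1)$ a.e.\ but are far from any affine isometry). To invoke FJM you must first establish a.e.\ orientation consistency of $dF$, and the only available tool for that is the injectivity of $F$ via degree theory; this is exactly the point where "homeomorphism" has to enter quantitatively, and it is not addressed in your argument. Until both of these are repaired, the chain "metric differential Euclidean a.e.\ $\Rightarrow$ locally $(1+C\eps)$-bi-Lipschitz $\Rightarrow$ length-preserving" does not go through; by contrast, the paper gets the local lower bound on $(\kgexp)^{-1}$ directly from the isometry of the blow-up limit together with the fact that $\kgexp$ of a small intrinsic metric sphere separates $\partial X$, and then concludes by the same partitioning-along-curves argument you sketch at the end.
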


\begin{proof}
	Fix $q \in \partial X$ and its preimage $q' \in  \partial B_R^{\kappa}(o_p)$ under $\kgexp$. Obviously, we have that as $r \to 0$,
\begin{equation}\label{bry-reg-hm-1}	 
\frac{1}{r^{m-1}}\mathcal{H}^{m-1}(B_r^\partial (q')) \to \omega_{m-1}.
\end{equation}	 
We claim the same holds for $q$, that is, as $r \to 0$,
\begin{equation}\label{bry-reg-hm-2}
\frac{1}{r^{m-1}}\mathcal{H}^{m-1}(B_r^\partial (q)) \to \omega_{m-1}.
\end{equation}
Indeed, we have that $(X, \partial X, \frac{1}{r}\dd_X, q)\to (T_qX, T_q \partial X, \dd_{T_qX}, o_q)$ as $r\to 0$, so in particular by \cite[Theorem 1.8]{BNS22} or \cite[Theorem 1.3]{Fuj-extr}, $\mathcal{H}^{m-1}$ on the boundaries converge as well. By Theorem~\ref{intr-bdry-conv}, the boundaries converge under intrinsic metrics, i.e., $(\partial X, \frac{1}{r}\dd_{\partial X}, q)\to (T_q\partial X, \dd_{T_q \partial X}, o_q)$ as $r \to 0$. Combining this with the fact that $T_q \partial X = \R^{m-1}$ (Lemma \ref{lem-all-reg-bry}) and the invariance of $\mathcal{H}^{m-1}$ with respect to intrinsic and extrinsic metric (Lemma \ref{lem-extr=intr}) gives that
\begin{equation*}
\frac{1}{r^{m-1}}\mathcal{H}^{m-1}(B_r(q) \cap \partial X) \to \omega_{m-1}.
\end{equation*}
To prove \eqref{bry-reg-hm-2} from this all we need is the following lemma which says that for any $\delta > 0$, the intrinsic ball of radius $r$ contains the extrinsic ball of radius $r/(1+\delta)$ for sufficiently small $r$. 
\begin{lem}\label{lem-extr-intr-extr}
Let $(X^n, \dd)$ be an Alexandrov space, $E\subset X$ be an extremal subset and $p\in E$ be a point. Let $\dd_E$ be the intrinsic metric on $E$ induced by the extrinsic metric $d$.
Then 
\[
\lim_{q\to p, q\in E}\frac{\dd_E(p,q)}{\dd(p,q)}=1
\]
\end{lem}
\begin{proof}[Proof of Lemma \ref{lem-extr-intr-extr}]
Let $f=\dd(\cdot, p)$. It is well known that $\lim_{x\to p} |\nabla_x f|=1$. Let $\delta>0$. By above there is $r>0$ such that $ |\nabla_x f|>1-\delta$ for any $x\in B_{r}(p)\setminus \{p\}$.

Let $q\in E \cap B_{r}(p)\setminus \{p\}$. By \cite[Lemma 6.1]{Fuj-uniform-bounds-extr} there exists a backward gradient curve $\gamma$ of $f$ passing through $q$ which lies in $E$. The function $f$ decreases with speed at least $(1-\delta)^2$ along this curve. This implies that $\gamma$ reaches $p$ in finite time  $\le  \frac{\dd(p,q)}{(1-\delta)^2}$. Since $\gamma$ is 1-Lipschitz this gives that $\dd_E(p,q)\le \frac{\dd(p,q)}{(1-\delta)^2}$.

\end{proof}
	
	Now for each $r > 0$ consider the $1$-Lipschitz pointed maps $\Phi_r: (\partial B^{\kappa}_R(o_p), \frac{1}{r}\dd_{\partial B^{\kappa}_R(o_p)}, q') \to (\partial X, \frac{1}{r}\dd_{\partial X}, q)$ which is just a rescaling of $\kgexp$. Observe that the unit ball around $q'$ (with respect to $\frac{1}{r}\dd_{\partial B^{\kappa}_R(o_p)}$) gets more and more dense in the unit ball around $q$ (with respect to $\frac{1}{r}\dd_{\partial X}$). This follows by volume considerations from \eqref{bry-reg-hm-1}, \eqref{bry-reg-hm-2} and the observation that for any $s > 0$ there is a uniform lower bound on the intrinsic ball of radius $s$ around any $x \in \partial X$ (since it contains the image of the intrinsic ball of radius $s$ around $(\kgexp)^{-1}(x)$ and $\kgexp$ is $\mathcal{H}^{m-1}$-preserving).
	
	 Taking now a subconvergent limit of $\Phi_r$, we obtain some $1$-Lipschitz $\Phi: T_{q'} \partial B_R(o_p) \to T_q \partial X$. Moreover, for any $x \in T_{q'}$ and $r > 0$, $\Phi$ maps $B_r(x)$ onto $B_r(\Phi(x))$ by the result of the previous paragraph. Since both domain and codomain are isometric to $\R^{m-1}$, this implies that $\Phi$ is an isometry. 
	 
	 Fix $\delta > 0$. We claim that $(\kgexp)^{-1}$ does not increase the length of curves by more than a factor of $(1+\delta)$. This will obviously imply that $(\kgexp)^{-1}$ is $1$-Lipschitz and hence an isometry. 
	 
	 By a standard partitioning argument, it suffices to prove the following claim: For all sufficiently small $r$ (depending on $q$ and $\delta$), if $\dd_{\partial X}(x, q) = r$ then $\dd_{\partial B^{\kappa}_R(o_p)}((\kgexp)^{-1}(y), q')\leq (1+\delta)r.$
	 
	 Since the map $\kgexp: \partial B_R^{\kappa}(o_p) \to \partial X$ is a homeomorphism the image of the metric sphere (with respect to intrinsic metric), we have that $\partial B^\partial_{(1+\delta)r}(q')$ separates $\partial X$. 
	 
	 Consider any shortest $[qy]$ in $\partial X$ with respect to $\dd_{\partial X}$ of length $r$. Since the map $\Phi$ above is an isometry we get that if  $r$ is sufficiently small  this curve cannot intersect $\kgexp(\partial B^\partial_{(1+\delta)r}(q'))$. Since the latter separates 
$\partial X$ it follows that the entire shortest $[q, y]$ including the point $y$ lies in $\kgexp (B_{(1+\delta)r}^\partial (q'))$. This proves the claim and hence finishes the proof of Proposition \ref{int isom}.
\end{proof}

	Now we can prove that $X$ must be a convex subset of the model space $M^{m}(\kappa)$.
\begin{Pro}\label{pro-conv-subset}
	$X$ is isometric to  a convex subset of the model space $M^{m}(\kappa)$.
\end{Pro}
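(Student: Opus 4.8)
The plan is to propagate the boundary rigidity of Proposition~\ref{int isom} into the interior and to exhibit $X$ directly as a length subspace of $M^m(\kappa)$, which will force that subspace to be convex. Throughout, recall that by Lemma~\ref{lem-kgexp-onto} the point $p$ is regular, so $T_pX$ is isometric to $\R^m$ (to $\R^m_+$ if $p\in\partial X$) and $B_R^\kappa(o_p)$ is genuinely the ball of radius $R$ around an interior point of $M^m(\kappa)$ (resp. around a boundary point of $M^m_+(\kappa)$); write $\Sigma_p=\SS^{m-1}$ (resp. the closed hemisphere). For $v\in\Sigma_p$ let $\sigma_v(t)=\kgexp(tv)$ be the radial gradient curve of $f=\itank(\dd(\cdot,p))$ from \eqref{gexp-defn}, which starts at $p$ with direction $v$.

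First I would show that each $\sigma_v$ is a unit-speed \emph{minimizing} geodesic of $X$ until the first time $t_v\le R$ at which it meets $\partial X$. Near $t=0$ it is the radial geodesic $\exp_p(tv)$ because $p$ is regular, and as long as it stays in the interior it is a gradient curve of $f$ with $|\nabla f|>0$, hence an honest geodesic of $X$. If it ceased to be minimizing at an interior point $x=\sigma_v(c)$, $c<t_v$, there would be a second minimizing geodesic from $p$ to $x$, realized as $\sigma_w\rvert_{[0,c]}$ for some $w\neq v$; since forward gradient curves of $f$ are unique and $f$ is not maximized at the interior point $x$, the curves $\sigma_v$ and $\sigma_w$ would coincide for all later times, giving $\kgexp(Rv)=\kgexp(Rw)$ and contradicting injectivity of $\kgexp\rvert_{\partial B_R^\kappa(o_p)}$ (Lemma~\ref{lem-bry-homeo}). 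Combined with the fact that $\kgexp$ maps $\partial B_R^\kappa(o_p)$ onto $\partial X$ (Lemma~\ref{lem-kgexp-onto}), this yields that the radial star $D:=\{tv:\ v\in\Sigma_p,\ 0\le t\le t_v\}\subset\overline{B_R^\kappa(o_p)}$ is mapped onto all of $X$: interior points are reached before time $t_v$ along their minimizing geodesic to $p$, and a boundary point $x$ lies on the minimizing radial segment in direction of a shortest $[px]$, where one checks, using extremality of $\partial X$, that such a shortest geodesic meets $\partial X$ only at its endpoint.

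Next, a Toponogov rigidity argument upgrades $\kgexp\rvert_D$ to a distance-preserving map. Fix $v,w\in\Sigma_p$ and $s\le t_v$, $t\le t_w$. Since $\sigma_v\rvert_{[0,s]}$ and $\sigma_w\rvert_{[0,t]}$ are minimizing and make angle $\angle(v,w)$ at $p$, the hinge form of Toponogov's comparison theorem for $\curv\ge\kappa$ gives
\begin{equation*}
  \dd_X\bigl(\kgexp(sv),\kgexp(tw)\bigr)\ \ge\ \dd_{M^m(\kappa)}(sv,tw),
\end{equation*}
while Proposition~\ref{exp short} (here $R\le\pik/2$) gives the reverse inequality. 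Hence $\kgexp\rvert_D$ is distance-preserving onto $X$, with $D$ carrying the metric induced from $M^m(\kappa)$, so $(X,\dd_X)$ is isometric to $D\subset M^m(\kappa)$. Since $X$ is a geodesic space, any two points of $D$ are joined inside $D$ by a path of length equal to their $M^m(\kappa)$-distance, i.e. by a minimizing geodesic of $M^m(\kappa)$; thus $D$ is convex and the proposition follows when $p\notin\partial X$. When $p\in\partial X$ one runs the same argument, noting that $\partial B_{R,+}^\kappa(o_p)$ splits into the spherical part $S_R(o_p)$ and the ``face'' part (the $R$-ball over $\Sigma_p\partial X$): radial curves in interior directions are treated exactly as above, the face part is handled by applying Proposition~\ref{int isom} to $\Sigma_p\partial X$, and $X$ is again recovered as a convex subset of $M^m(\kappa)$.

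The step I expect to be the main obstacle is the one in the second paragraph: showing rigorously that the radial gradient curves stay minimizing all the way to $\partial X$ (no interior cut points develop) and that every boundary point lies on such a minimizing radial segment. The right tools here are the homeomorphism property of $\kgexp\rvert_{\partial B_R^\kappa(o_p)}$ (Lemma~\ref{lem-bry-homeo}), forward uniqueness of gradient curves of $f$, and forward invariance of the extremal set $\partial X$ under the gradient flow of $f$. A secondary point requiring care is the case $\kappa>0,\ R=\pik/2$, where $\diam(X)$ can equal $\pik$ and the resulting set $D$ can be a genuine Alexandrov lens rather than a ball; this is automatically accommodated by the argument above, and it is precisely why one cannot instead argue by Lipschitz--volume rigidity on the full ball (or on its double), since $\mathcal{H}^{m}$ need not be preserved there.
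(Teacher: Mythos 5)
There is a genuine gap, and it sits at the heart of your argument: the hinge comparison goes the wrong way. For $\curv\ge\kappa$, Toponogov's theorem says that angles are \emph{at least} the comparison angles; equivalently, for a hinge at $p$ consisting of two shortest geodesics of lengths $s,t$ making angle $\angle(v,w)$, the distance between the endpoints satisfies
\begin{equation*}
\dd_X\bigl(\kgexp(sv),\kgexp(tw)\bigr)\ \le\ \dd_{M^{m}(\kappa)}(sv,tw),
\end{equation*}
not ``$\ge$'' as you claim. So both tools you invoke --- the hinge comparison and the $1$-Lipschitz property of $\kgexp$ (Proposition \ref{exp short}) --- give the \emph{same} upper bound on $\dd_X$, and you never obtain the lower bound $\dd_X\ge\dd_{M^m(\kappa)}$, which is exactly the hard part of the rigidity (a non-rigid space of $\curv\ge\kappa$ has interior distances strictly smaller than the model ones). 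This is where the paper's proof does real work: it takes an arbitrary geodesic $\gamma$ of $X$, writes it in polar coordinates $(v(t),s(t))$, pushes it out to the curve $\gamma_R(t)=\kgexp(v(t),R)$ in $\partial X$, and uses the intrinsic isometry of Proposition \ref{int isom} together with the comparison of Proposition \ref{exp comp} to control metric speeds and conclude that the model curve $\tilde\gamma(t)=\exp_{p'}(v(t),s(t))$ has length at most $L(\gamma)=\dd_X(x,y)$; Toponogov is then used only for the easy reverse inequality. Your proposal has no mechanism for transporting the boundary isometry into the interior, so the claimed distance preservation of $\kgexp$ on the radial star does not follow.

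The preparatory step is also unjustified. A gradient curve of $f=\itank(\dd(\cdot,p))$ with $|\nabla f|>0$ is not in general a geodesic: past the cut point of the direction $v$, the curve $t\mapsto\kgexp(tv)$ typically leaves the class of geodesics even though the gradient never vanishes (think of gradient curves of a distance function running along the cut locus). Moreover, your dichotomy ``ceases to be minimizing $\Rightarrow$ a second minimizing geodesic from $p$ appears'' is false: in Alexandrov spaces, as in Riemannian manifolds, a geodesic can stop minimizing at a conjugate-type point to which the minimizing geodesic is unique, so injectivity of $\kgexp$ on $\partial B_R^{\kappa}(o_p)$ (Lemma \ref{lem-bry-homeo}) cannot be brought to bear the way you do. Hence the claim that the radial curves are minimizing all the way to $\partial X$ is not established --- but even granting it, the reversed Toponogov inequality above means the approach as written cannot yield that $X$ embeds isometrically into $M^{m}(\kappa)$.
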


\begin{proof}
	Fix any $x, y \in X$. Let $d = \dd(x,y)$ and fix a unit speed geodesic $\gamma:[0, d] \to X$ between $x$ and $y$. For each $t \in [0,d]$, we choose a direction $v(t) \in \Sigma_p = \mathbb{S}^{m-1}$ so that $\gamma(t)$ may be reached from $p$ by a geodesic with direction $v(t)$ and length $s(t)$. In general, $v(t)$ may not be unique or continuously dependent on $t$, but $s(t)$ is obviously $1$-Lipschitz. 
	However, in our case $v(t)$ must be unique. This  immediately follows from Lemma~\ref{lem-bry-homeo}  which implies that the map $\kgexp: \partial B_R^{\kappa}(o_p) \to \partial X$ is injective.
	
	Since $\Sigma_p = \mathbb{S}^{m-1}$, we may identify $T_p$ isometrically with the tangent space at some designated origin $p' \in M^{m}(\kappa)$. Taking the Riemannian exponentiation map at $p'$, we define $x' = \text{exp}_{p'}(v(0), s(0))$ and $y'=\text{exp}_{p'}(v(d), s(d))$. Denoting $d' = \dd_{M^m(\kappa)}(x', y')$, we will prove that $d=d'$ and that $\tilde\gamma(t):=\text{exp}_{p'} (v(t), s(t))$ is necessarily the geodesic between $x'$ and $y'$. We will prove this in the case that $s(t) \neq 0$ for any $t \in [0,d]$. As can be seen, the other case will follow easily from the rest of our argument by dividing the geodesic into two parts at $p$ and using a limiting argument.
	
	We first show that $v(t)$ is Lipschitz on $[0,d]$. First consider the curve $\gamma_R(t):=\kgexp((v(t), R))$ in $\partial X$, where we identify $(v(t), R)$ to a point in $T_p = C(\Sigma_p)$ in the obvious way. It is easy to see that this curve must be Lipschitz by using Proposition \ref{exp comp} to compare $\dd_{X}(\kgexp((v(t_1), R),$ $\kgexp((v(t_2), R))$ and $\dd_X(\kgexp((v(t_1), s(t_1)), \kgexp((v(t_2), s(t_2))) = \dd(\gamma(t_1), \gamma(t_2)) = |t_2 - t_1|$, giving the bound 
\begin{align*}
	\dd_{X}(\kgexp((v(t_1), R), \kgexp((v(t_2), R)) \leq C_{\kappa, s_0, R} |t_2 - t_1|,
\end{align*}
where $s_0 = \min_{t\in[0,d]} s(t) > 0$ by assumption. Now using that $\kgexp$ is an intrinsic isometry between $\partial B_R^{\kappa}(o_p)$ to $\partial X$ by Proposition \ref{int isom}, we have that $\tilde \gamma_R(t):=(v(t), R)$ seen as a curve in $\partial B_R^{\kappa}(o_p)$ must also be Lipschitz on $[0,d]$. This immediately implies that $v(t)$ is Lipschitz viewed as a curve in $\Sigma_p$.

We now show that $d' = d$ and that $\text{exp}_{p'} (v(t), s(t))$ is the geodesic between $x'$ and $y'$.
WLOG we can assume that $x\notin [py]$, $y\notin [px]$ and $p\notin [xy]$  since in any  of these cases the equality $d=d'$ is obvious.

	 Using Proposition \ref{int isom} again, we see that the curves $\gamma_R$ and $\tilde\gamma_R$ must be of the same length. 
	 
	 Denoting by $L(\cdot)$ the length of a locally Lipschitz curve, we claim the following.
	 
	 {\bf Claim } 
 $L(\tilde \gamma)\le L(\gamma)=d$.
	
	Let us prove the Claim.  Recall that a metric speed of a locally Lipschitz curve $\eta(t)$ in a  metric space $Y$ is defined as $|\eta'(t)|=\lim_{\eps\to 0}\frac{\dd(\eta(t), \eta(t+\eps))}{|\eps|}$. Note that this notation does not mean that any kind of tangent vector to $\eta$ exists. We have that the curves $\gamma, \tilde\gamma, \gamma_R$ and $\tilde\gamma_R$ are all Lipschitz. Therefore they have metric speeds defined for almost all $t$ and their lengths are given by the integrals of the metric speeds \cite[Theorem 2.7.6]{BBI-book} (this is obvious for $\gamma$ because it is a geodesic). We denote by $I$ the full measure the full measure set of $t \in (0,d)$ for which all four curves $\gamma, \tilde \gamma, \gamma_R, 
\tilde \gamma_R$ have metric derivatives.

Next we will show that $|\tilde \gamma'(t)|\le |\gamma'(t)|$ for all $t \in I$ which will obviously imply the Claim.
Let us fix $t\in I$ and let $a=|\tilde \gamma_R'(t)|$.

By above we have that $\dd(\tilde \gamma_R(t+\eps), \tilde \gamma_R(t))=a|\eps|\pm o(|\eps|)$ and the same holds for $\gamma_R$.
Therefore by the cosine law in $M^{m}(\kappa)$ we have that comparison angles $ \kang( \dd(\gamma_R(t), \gamma_R(t+\eps)); R, R) $ and $ \kang( \dd(\tilde \gamma_R(t), \tilde \gamma_R(t+\eps)); R, R) $ differ by at most $o(|\eps|)$. 

Note that $ \kang(\dd(\tilde \gamma_R(t), \tilde \gamma_R(t+\eps)); R, R) = \kang( \dd(\tilde \gamma(t), \tilde \gamma (t+\eps)); s(t), s(t+\eps))$.
On the other hand, by Proposition ~\ref{exp comp} it holds that 

\[
\kang (\dd(\gamma(t), \gamma(t+\eps)); s(t), s(t+\eps)) \ge  \kang(\dd(\gamma_R(t), \gamma_R(t+\eps)); R, R).
\]
Combining the above we get 
\[
\kang (\dd(\gamma(t), \gamma(t+\eps)); s(t), s(t+\eps))\ge  \kang(\dd(\tilde \gamma(t), \tilde \gamma (t+\eps)); s(t), s(t+\eps))-o(|\eps|)
\]	
and hence

\begin{equation}\label{eq:est1}
\dd(\gamma(t), \gamma(t+\eps))\ge \dd(\tilde\gamma(t), \tilde\gamma(t+\eps))-o(|\eps|).
\end{equation}
By the definition of metric speed $\dd(\gamma(t), \gamma(t+\eps))=|\gamma'(t)|\eps+o(|\eps|)$ and  $\dd(\tilde \gamma(t), \tilde \gamma(t+\eps))=|\tilde\gamma'(t)|\eps+o(|\eps|)$. Plugging these into \eqref{eq:est1}, dividing by $|\eps|$ and taking the limit as $\eps\to 0$ gives
\[
|\gamma'(t)|\ge |\tilde\gamma'(t)|.
\]
Integrating from $0$ to $d$ gives the Claim.

	Now $\tilde \gamma$ is a curve between $x'$ and $y'$ and so it has length at least $d'$, which means $d' \leq d$. However, standard comparison tells us that $d' \geq d$. This means that $d = d'$ and $\tilde\gamma$ must be the geodesic from $x'$ and $y'$ as required. This means that the map $x\mapsto \log_p([px])$ gives a distance preserving embedding of $X$ into the $\pik/2$ ball around $o_p$ in $T_pX$ equipped with the constant curvature $\kappa$ metric and moreover the image is convex. Thus $X$ is isometric to  a convex subset of a $\pik/2$ ball in $M^{m}(\kappa)$.
\end{proof}

To finish the proof we need the following Theorem.
\begin{thm}\label{nested-convex}
Let $R\le \pik/2$ and let $p\in M^{m}(\kappa)$. Let $K_1\subset K_2\subset \overline{B_R(p)}$ be nested closed convex domains. Then
\begin{equation}\label{nonstrict-ineq}
\mathcal H^{m-1}(\partial K_1)\le \mathcal H^{m-1}(\partial K_2)
\end{equation}

Moreover, this inequality is strict if $R<\pik/2$ and $K_1\ne K_2$, i.e.,
\begin{equation}\label{strict-ineq}
\mathcal H^{m-1}(\partial K_1)< \mathcal H^{m-1}(\partial K_2)
\end{equation}
\end{thm}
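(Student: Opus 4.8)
The plan is to run the nearest-point projection onto the smaller body. Since $R\le\pik/2$ the ball $\overline{B_R(p)}$ is convex in $M^m(\kappa)$ and $x\mapsto d(x,K_1)$ is convex on it (so $-d(\cdot,K_1)$ is semiconcave); hence each $x\in K_2$ has a unique nearest point $\Pi(x)\in K_1$, and $\Pi\colon K_2\to K_1$ is $1$-Lipschitz. (When $\kappa>0$ and $R=\pik/2$ one works inside the closed hemisphere containing $K_2$, where the same comparison estimates apply, or passes to the limit from radii $R'<\pik/2$.) An elementary case check gives $\Pi(\partial K_2)\subseteq\partial K_1$: points of $\Int K_1$ are fixed by $\Pi$ and lie in $\Int K_2$, hence not on $\partial K_2$; a point of $\partial K_2\cap K_1$ lies in $\partial K_1$ since $\Int K_1\subseteq\Int K_2$; and a point of $\partial K_2\setminus K_1$ projects to $\partial K_1$ because nearest points of a convex set from outside lie on its boundary.

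The content of \eqref{nonstrict-ineq} is then that $\Pi\colon\partial K_2\to\partial K_1$ is \emph{onto}: granting this, $\mathcal H^{m-1}(\partial K_1)=\mathcal H^{m-1}(\Pi(\partial K_2))\le\mathcal H^{m-1}(\partial K_2)$ since $1$-Lipschitz maps do not increase $\mathcal H^{m-1}$. To prove surjectivity, fix $q\in\partial K_1$; we may assume $q\in\Int K_2$. Choose a supporting hyperplane $H$ of $K_1$ at $q$ and let $\gamma$ be the geodesic from $q$ in the outward normal direction. For $0\le t\le\pik/2$, $q$ is the foot of the perpendicular from $\gamma(t)$ to the halfspace bounded by $H$ containing $K_1$, so $\Pi(\gamma(t))=q$; thus the fiber $F:=\Pi^{-1}(q)$ is closed, star-shaped about $q$ (in particular connected), and contains the normal segment $\gamma([0,\pik/2])$. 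If $F$ met $\Int K_2$ but not $\partial K_2$, then $F\cap K_2$ would be clopen in $F$, forcing $F\subseteq\Int K_2$; this is excluded using $K_2\subseteq\overline{B_R(p)}$ with $R\le\pik/2$: for $\kappa\le 0$ the fiber is unbounded, and for $\kappa>0$ the confinement of $K_2$ to a ball of radius $\le\pik/2$ prevents it from containing the whole normal segment together with its continuation to the first focal point (whose distance from $q$ is at least $\pik/2$). Hence $F\cap\partial K_2\ne\varnothing$, i.e. $q\in\Pi(\partial K_2)$.

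For strictness when $R<\pik/2$ and $K_1\subsetneq K_2$, first locate $q_0\in\partial K_1\cap\Int K_2$, which may moreover be taken a smooth point of $\partial K_1$: a geodesic from $\Int K_1$ to a point of $\Int K_2\setminus K_1$ (nonempty since $K_1\subsetneq K_2$) crosses $\partial K_1$ inside the convex open set $\Int K_2$. It suffices to produce a convex body $K_1\subsetneq L\subseteq K_2$ with $\mathcal H^{m-1}(\partial K_1)<\mathcal H^{m-1}(\partial L)$ and then apply \eqref{nonstrict-ineq} to $L\subseteq K_2$; take $L=\operatorname{conv}(K_1\cup\{q_0'\})$ with $q_0'$ slightly beyond $q_0$ along the outward normal, still in $\Int K_2$. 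Writing $N=\partial L\setminus\partial K_1$ and $A=\partial K_1\cap\Int L$, the surjectivity argument applied to $K_1\subseteq L$ gives $\Pi_{K_1}(N)\supseteq A$, so $\mathcal H^{m-1}(A)\le\mathcal H^{m-1}(N)$, and this is \emph{strict} because $N$ has a genuine cone point at $q_0'$ — its tangent cone there is a cone over a proper spherical domain, not $\R^{m-1}$ (for $m=2$ this is the elementary fact that adding a corner strictly lengthens a planar convex curve) — while $\Pi_{K_1}(q_0')=q_0$ is a regular point of the rectifiable hypersurface $\partial K_1$, so $\Pi_{K_1}|_N$ cannot be $\mathcal H^{m-1}$-preserving. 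Since $\partial L=(\partial K_1\setminus A)\cup N$, this gives $\mathcal H^{m-1}(\partial L)>\mathcal H^{m-1}(\partial K_1)$. (Alternatively, equality in \eqref{nonstrict-ineq} would make $\Pi\colon\partial K_2\to\partial K_1$ a $1$-Lipschitz, surjective, $\mathcal H^{m-1}$-preserving map, hence an isometry by Lipschitz-volume rigidity of the type used elsewhere in this paper, which with $\Pi|_{\partial K_1\cap\partial K_2}=\mathrm{id}$ and the presence of $q_0\in\partial K_1\cap\Int K_2$ contradicts $K_1\subsetneq K_2$.)

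The step I expect to be the main obstacle is the surjectivity of $\Pi$ on the boundaries for $\kappa>0$, i.e. excluding the degenerate possibility $F\subseteq\Int K_2$: this is exactly where the hypothesis $R\le\pik/2$ must be used, and quantifying the competition between how far the normal fiber of $\partial K_1$ reaches and how far $K_2$ can extend while remaining in a ball of radius $\le\pik/2$ requires genuine care — which is also the reason the strict inequality is only claimed for $R<\pik/2$. Once surjectivity is in hand, the strictness argument is comparatively routine, whether one invokes Lipschitz-volume rigidity or analyzes the cone point of the intermediate body $L$ directly.
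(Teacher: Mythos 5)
Your opening step fails for $\kappa>0$, and this is the fatal gap: on $\overline{B_R(p)}\subset M^m(\kappa)$ with $\kappa>0$ the function $\dd(\cdot,K_1)$ is \emph{not} convex, and the nearest-point projection $\Pi\colon K_2\to K_1$ is in general neither single-valued nor $1$-Lipschitz, even when $R<\pik/2$. Concretely, in $\SS^2$ let $K_1$ be an equatorial geodesic arc of length $\theta_0\in(0,\pi)$ with midpoint $p$, and let $x,y$ be points at colatitude $\eps$ near the north pole whose longitudes lie just beyond the two ends of the arc; then $x,y$ and $K_1$ all lie in $\overline{B_R(p)}$ for a suitable $R<\pi/2$ (and one may choose $R$ so that $x,y\in\partial B_R(p)$), the unique nearest points of $x$ and $y$ in $K_1$ are the two opposite endpoints of the arc, so $\dd(\Pi(x),\Pi(y))=\theta_0$ while $\dd(x,y)=O(\eps)$: the Lipschitz constant blows up like $1/\eps$. (Relatedly, above the interior of the arc one has $\dd(\cdot,K_1)=\pi/2-\dd(\cdot,N)$ near the pole $N$, which is concave; and at points of $K_2$ at distance greater than $\pi/2$ from $K_1$ — possible since the hypothesis $R\le\pik/2$ bounds the ball, not the distances inside it, which can approach $\pi$ — the nearest point need not even be unique.) This is precisely why the paper does not argue via the nearest-point map: its proof of the theorem is by Crofton's formula, $\mathcal H^{m-1}(\partial K)=C(m)\int_{G(m)}\#\{L\cap\partial K\}\,\dd L$, which is monotone under nesting and yields strictness for $R<\pik/2$ from an open set of lines meeting $K_2$ but not $K_1$; the remark following the proof notes that a $1$-Lipschitz retraction $K_2\to K_1$ does exist for $\kappa>0$, but it is the nontrivial short retraction of Lytchak--Petrunin, not the nearest-point projection. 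For $\kappa\le 0$ your projection argument is sound and coincides with the paper's alternative remark (CAT(0) projection is short), and there surjectivity is immediate: the outward normal ray from $q\in\partial K_1\cap\Int K_2$ lies in $\Pi^{-1}(q)$ and must exit the bounded set $K_2$ through $\partial K_2$.

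Two secondary problems would remain even after repairing the $\kappa>0$ Lipschitz issue. First, your strictness argument asserts that $\Pi_{K_1}|_N$ cannot be $\mathcal H^{m-1}$-preserving because $q_0'$ is a cone point of $N$ while $q_0$ is regular on $\partial K_1$; a single point has $\mathcal H^{m-1}$-measure zero, so this is not an argument as written — you would need a genuine neighborhood computation (e.g.\ comparing the lateral cone over a small cap with the cap it projects onto), or the Lipschitz-volume rigidity route, which again presupposes the $1$-Lipschitz surjection you have not established for $\kappa>0$. Second, the parenthetical reduction of the case $\kappa>0$, $R=\pik/2$ to radii $R'<\pik/2$ ``by passing to the limit'' is unjustified: there is no evident convexity-preserving contraction of a hemisphere producing nested convex approximants. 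As it stands, the proposal proves the theorem only for $\kappa\le 0$.
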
	
\begin{rem}
This theorem is likely known as it easily follows from known results but we could not find a reference for $\kappa>0$.  Therefore we give a proof.
\end{rem}
\begin{rem}
The strict inequality can fail if  $\kappa>0$ and $R=\pik/2$ even if $K_1\ne K_2$. An example is when $K_1$ is an intersection of two hemispheres in $\SS^n$ and $K_2$ is one of these hemispheres.
\end{rem}
\begin{proof}

Theorem \ref{nested-convex} easily follows from the following version of Crofton's formula in high dimensions \cite[14.70]{santalo-kac-book}:
Given a closed convex domain $K\subset B_{\pik/2}(p)$ in $M^m(\kappa)$ with piecewise smooth boundary, the volume of the boundary of $K$ can be computed by the formula
\begin{equation}
\mathcal H^{m-1}(\partial K)=C(m)\cdot\int_{G(m)}\#\{ L\cap \partial K\}dL
\end{equation}
where $G(m)$ is the set of lines in  $M^m(\kappa)$ (or great circles if $\kappa>0$), $L\in G(m)$  and $dL$ is some naturally defined volume measure on $G(m)$.

Since the convex domains $K_1\subset K_2$ are nested it follows that $\#\{ L\cap \partial K_1\}\le \#\{ L\cap \partial K_2\}$ which immediately gives the non-strict inequality in Theorem \ref{nested-convex} assuming that $K_1$ and $K_2$ have piecewise smooth boundaries.

 When $R<\pik/2$ the intersections $L\cap  K_1$ and $L\cap  K_2$ are intervals if nonempty and it is not hard to see that there is an open set of lines which intersect $K_2$ but not $K_1$ which gives the strict inequality \eqref{strict-ineq}.

The case of general $K_1, K_2$ easily follows by approximation.
\end{proof}
\begin{rem}
It is instructive to think about why the above argument doesn't give a strict inequality if $\kappa>0$ and $R=\pik/2$ even if $K_1\ne K_2$. This should give an alternate proof  to \cite{GP-almost-maximal} of  the rigidity classification in this case. 
\end{rem}

\begin{rem}
A different proof of Theorem~\ref{nested-convex} for $\kappa\le 0$ follows from a classical result that in a $CAT(0)$ space  the nearest point projection onto a closed convex subset is $1$-Lipschitz.
For $\kappa>0$ it can be shown using the proof of Theorem 1.1 in  \cite{Lyt-Pet-short-retr} which implies that  there exists a strong deformation retraction $\Pi_t\co K_2\to K_2, t\in [0,1]$ where $\Pi_0=\Id, \Pi_1\co K_2\to K_1 $ and $\Pi_t|_{K_1}=\Id$ for all $t$.  Moreover, $\Pi_t$ is 1-Lipschitz for all $t$. Moreover if $R<\pik/2$ then the constructed map $\Pi_1$ has local Lipschitz constant strictly smaller than 1 outside $K_1$. This gives the strict bound \eqref{strict-ineq}.

\end{rem}

We now complete the proof of the rigidity theorem. 

\begin{proof}[Proof of the first part of Theorem~\ref{main thm}]
By Proposition \ref{pro-conv-subset}, $X$ is a convex subset in $M^{m}(\kappa)$. It is clearly contained in the ball $\overline{B_R^{\kappa}(p)}$ in $M^{m}(\kappa)$.
If it is not equal to the whole ball and $R<\pik/2$ then by Theorem~\ref{nested-convex} $\mathcal H^{m-1}(\partial X)<\mathcal{H}^{m-1}(\partial B_{R}^{\kappa})$, which is a contradiction. This finishes the proof of Theorem \ref{main thm} in that case. 

If $\kappa>0$ and $R=\pik/2$ then the result follows from \cite{GP-almost-maximal}.
\end{proof}

The second part of Theorem~\ref{main thm}, i.e., when $p \in \partial X$, $\overline{B_R(p)}=X$, and $\mathcal{H}^{m-1}(\partial X)=\mathcal{H}^{m-1}(\partial B^{\kappa}_{R,+})$, can also be proved in a similar manner with some minor changes. We will not go into full detail of the proof but instead give an outline indicating the necessary changes to the previous proof.

Suppose that the conditions of the theorem are met. 

First, one can show that $T_pX = \R^{m}_+$ and that the map $\gexp\rvert_{\partial B_R^{\kappa}(o_p)}$ maps exactly onto the $\partial X$. Indeed, arguing as in the proof of Lemma \ref{lem-kgexp-onto}, Lemma \ref{lem-exp-onto-bry} implies that $\mathcal{H}^{m-1}(\partial B^{\kappa}(o_p)) = \mathcal{H}^{m-1}(\partial B^{\kappa}_{R,+})$. It follows that the double (see Theorem \ref{thm-doubling}) of $\Sigma_p X$ must have the same measure as $\mathbb{S}^{m-1}$ and hence by the maximal volume rigidity for Alexandrov spaces with $\curv \ge 1$ must be isometric to $\mathbb{S}^{m-1}$. Therefore, $\Sigma_p X$ is the hemisphere and $T_p X = \R^{m}_+$. The proof that $\gexp\rvert_{\partial B_R^{\kappa}(o_p)}$ maps exactly onto $\partial X$ is exactly the same. 

Next, one can show that that map $\kgexp: \partial B_R^{\kappa}(o_p) \to \partial X$ is an isometry. The proofs of Lemma \ref{lem-bry-homeo} and \ref{int isom} goes through without change. The only place where one has to be careful is that $B_R^{\kappa}(o_p)$ is not a manifold with smooth boundary anymore. However, it is clear that for any singular $q$, i.e., if $q$ is exactly on the boundary of the $R$-hemisphere of $o_p$, one still has that $q$ is a regular point of $\partial B_R^{\kappa}(o_p)$ and hence  $\mathcal{H}^{m-1}(B_r(q) \cap \partial X) \to \omega_{m-1}$ as $r \to 0$, which is all that is needed. 

Finally, the same proof for Proposition \ref{pro-conv-subset} shows that $X$ must be a convex subset of $B_{R, +}^{\kappa} \subset M^{m}(\kappa)$. Theorem \ref{nested-convex} now gives the desired result in the cases where $\kappa \leq 0$ or $\kappa > 0$ and $R < \pik/2$. In the case where $\kappa > 0$ and $R = \pik/2$, one has $\mathcal{H}^{m-1}(\partial B_{R, +}^{\kappa}) = \mathcal{H}^{m-1}(\partial B_{R}^{\kappa})$, so one is actually in the original rigidity situation addressed in \cite{GP-almost-maximal}. It is then easy to see that if $p \in \partial X$ and $X = \overline{B_{\pik/2}(p)}$ then $X$ must be $L^{m, \kappa}_{\alpha}$ for some $\alpha \in (0, \pi/2]$. 

\section{Almost Rigidity}

Given the rigidity result we obtain in Theorem \ref{main thm} it is natural to wonder what happens in the almost rigidity situation. We will consider the first part of Theorem \ref{thm-almost-rigidity} first. We will divide our analysis into two cases, when $R < \pik/2$ and when $R = \pik/2$. 

We consider the case $R < \pik/2$ first and prove the following.
\begin{Thm}\label{thm-almost-rigid}
Given an integer $m$, $\kappa \in \R$ and $R > 0$. If $\kappa > 0$, assume in addition that $R < \pik/2$. For any $\eps > 0$, there exists $\delta(\kappa, m, R) > 0$ such that the following holds: Let $X^m$ be Alexandrov with $\curv \ge \kappa$. Assume $\overline{B_R(p)} = X$ for some $p \in X$ and that 
\begin{equation*}
	\mathcal{H}^{m-1}(\partial X) > \mathcal{H}^{m-1}(\partial B_{R}^{\kappa})-\delta.
\end{equation*}
Then
\begin{equation*}
	\dd_{GH}(X, \overline{B_R^{\kappa}}) < \eps.
\end{equation*}
\end{Thm}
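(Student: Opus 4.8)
The plan is a compactness--contradiction argument combining Gromov precompactness, the rigidity of Theorem~\ref{main thm}, and the sharp numerical inequality recorded in the Remark following Theorem~\ref{thm-vol-bound}. Write $\beta$ for the $\mathcal{H}^{m-1}$-volume of the closed ball of radius $R$ in $M^{m-1}(\kappa)$. Since $\mathcal{H}^{m-1}(\partial B_{R,+}^{\kappa})=\tfrac12\mathcal{H}^{m-1}(\partial B_R^{\kappa})+\beta$, the Remark after Theorem~\ref{thm-vol-bound} says precisely that $2\beta<\mathcal{H}^{m-1}(\partial B_R^{\kappa})$ in our regime ($\kappa\le0$, or $\kappa>0$ and $R<\pik/2$); this strict inequality is the key input. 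Suppose now the theorem fails for some $\eps>0$. Then there are Alexandrov spaces $X_i^m$ with $\curv\ge\kappa$, basepoints $p_i$ with $\overline{B_R(p_i)}=X_i$, and $\mathcal{H}^{m-1}(\partial X_i)\to\mathcal{H}^{m-1}(\partial B_R^{\kappa})$, yet $\dd_{GH}(X_i,\overline{B_R^{\kappa}})\ge\eps$ for all $i$. Since $\diam X_i\le 2R$, Gromov precompactness gives, after passing to a subsequence, $(X_i,p_i)\to(X_\infty,p_\infty)$ in the pointed Gromov--Hausdorff sense with $X_\infty$ Alexandrov of $\curv\ge\kappa$ and $\dim X_\infty\le m$; passing distances to the limit gives $\overline{B_R(p_\infty)}=X_\infty$. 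Moreover $\mathcal{H}^{m-1}(\partial X_i)$ is eventually bounded below by $\tfrac12\mathcal{H}^{m-1}(\partial B_R^{\kappa})>0$, so (rescaling if necessary to apply it at scale $2R$) the corollary of Theorem~\ref{thm-vol-bound-local} noted in the introduction --- a uniform positive lower bound on boundary volume in a fixed ball forces the dimension of collapse to be at least $m-1$ --- gives $\dim X_\infty\ge m-1$. I treat the two possibilities $\dim X_\infty\in\{m-1,m\}$.

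\emph{Noncollapsed case $\dim X_\infty=m$.} The convergence is noncollapsed, so by the stability of boundary volume under noncollapsed Gromov--Hausdorff convergence (\cite{BNS22}[Theorem~1.8] or \cite{Fuj-extr}[Theorem~1.3]) one has $\mathcal{H}^{m-1}(\partial X_\infty)=\lim_i\mathcal{H}^{m-1}(\partial X_i)=\mathcal{H}^{m-1}(\partial B_R^{\kappa})$. Since $\overline{B_R(p_\infty)}=X_\infty$ and $\kappa\le0$ or $R<\pik/2$, Theorem~\ref{main thm} forces $X_\infty=\overline{B_R^{\kappa}}\subset M^m(\kappa)$, hence $\dd_{GH}(X_i,\overline{B_R^{\kappa}})\to0$, contradicting $\dd_{GH}(X_i,\overline{B_R^{\kappa}})\ge\eps$.

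\emph{Collapsed case $\dim X_\infty=m-1$.} Here I would invoke the structure of codimension-one collapse of Alexandrov spaces with locally convex boundary: over the complement of a set of measure zero in $X_\infty$ the collapse has $1$-dimensional fibers, and a fiber over a point of $\Int(X_\infty)$ is either a short circle (contributing nothing to $\partial X_i$) or a short interval, both of whose endpoints lie on $\partial X_i$; the fibers over $\partial X_\infty$ and over singular points form a negligible set. Consequently $\partial X_i$ covers $X_\infty$, away from a set of arbitrarily small measure, with multiplicity at most $2$ via an asymptotically $1$-Lipschitz map, so $\limsup_i\mathcal{H}^{m-1}(\partial X_i)\le 2\,\mathcal{H}^{m-1}(X_\infty)$. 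On the other hand $X_\infty=\overline{B_R(p_\infty)}$ is $(m-1)$-dimensional Alexandrov of $\curv\ge\kappa$, so Bishop--Gromov gives $\mathcal{H}^{m-1}(X_\infty)\le\beta$. Combining,
\[
\mathcal{H}^{m-1}(\partial B_R^{\kappa})=\lim_i\mathcal{H}^{m-1}(\partial X_i)\le 2\,\mathcal{H}^{m-1}(X_\infty)\le 2\beta<\mathcal{H}^{m-1}(\partial B_R^{\kappa}),
\]
a contradiction. Thus $\dim X_\infty=m-1$ cannot occur, and the theorem follows.

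The main obstacle is the collapse estimate $\limsup_i\mathcal{H}^{m-1}(\partial X_i)\le 2\,\mathcal{H}^{m-1}(X_\infty)$: one must understand the behaviour of the (extremal) boundary under a one-dimensional collapse and rule out any length gain in the limit. I would establish it either via the fibration/structure theory for collapsing Alexandrov spaces, or, to stay self-contained, by passing to the doubles $\widehat X_i=X_i\cup_{\partial X_i}X_i$ (closed, $\curv\ge\kappa$), in which $\partial X_i$ is the fixed-point set of the reflection involution, and analyzing the collapse of $\widehat X_i$ together with that involution. Everything else --- precompactness, the dimension-of-collapse bound, boundary-volume stability, and the rigidity input of Theorem~\ref{main thm} --- is standard or already available in the paper.
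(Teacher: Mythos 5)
Your proposal is correct, and its noncollapsed half is identical to the paper's: boundary-volume stability under noncollapsed convergence plus the rigidity Theorem~\ref{main thm}. Where you differ is in how collapse is excluded. The paper's own proof of Theorem~\ref{thm-almost-rigid} is softer: it splits according to $d_i=\dd_i(p_i,\partial X_i)$. If $d_i\to 0$ it applies the half-ball bound of Theorem~\ref{thm-vol-bound} at a nearby boundary basepoint and uses $\mathcal H^{m-1}(\partial B_{R+d_i,+}^{\kappa})\to\mathcal H^{m-1}(\partial B_{R,+}^{\kappa})<\mathcal H^{m-1}(\partial B_{R}^{\kappa})$; if $d_i\ge d>0$ it shows, via convergence of $\partial X_i$ to an extremal subset $E\subset X_\infty$, Lemma~\ref{extr-dim-proper}, and the covering argument based on the local bound of Theorem~\ref{thm-vol-bound-local}, that necessarily $E=X_\infty$, i.e.\ the boundaries become dense, contradicting $d_i\ge d$; no collapse structure theory is needed. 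You instead exclude collapse through the codimension-one fibration picture: the $2$-to-$1$, asymptotically $1$-Lipschitz covering of $X_\infty$ by $\partial X_i$ giving $\limsup_i\mathcal H^{m-1}(\partial X_i)\le 2\,\mathcal H^{m-1}(X_\infty)$, combined with volume comparison in dimension $m-1$ and the strict inequality $2\,\mathcal H^{m-1}(B^{\kappa,m-1}_R)<\mathcal H^{m-1}(\partial B^{\kappa}_R)$ (your derivation of this from the Remark after Theorem~\ref{thm-vol-bound} via $\mathcal H^{m-1}(\partial B^{\kappa}_{R,+})=\tfrac12\mathcal H^{m-1}(\partial B^{\kappa}_R)+\beta$ is valid). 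This is exactly the strategy the paper deploys for the genuinely collapsing cases $R=\pik/2$ (Theorems~\ref{thm-almost-rigid-2} and \ref{thm-almost-rigid-3}), and the remark following Theorem~\ref{thm-almost-rigid-3} states explicitly that it also rules out collapse in Theorem~\ref{thm-almost-rigid}, so your route is legitimate. The step you only sketch, the factor-$2$ estimate, is the technically heavy part: it needs the strainer-map fibration with graph fibers, Corollary~\ref{cor-bry-conical} to exclude vertices of degree $\ge 3$ (so fibers are intervals or circles, with exactly the interval endpoints lying on $\partial X_i$), Fujioka's $(1+\delta)$-bilipschitz control of the strainer map on the boundary sheets, and the covering argument to discard the non-strained set; all of these ingredients are in the paper and are carried out in the proof of Theorem~\ref{thm-almost-rigid-2}. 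In sum, your argument buys uniformity in $R$ (it is the one that survives at $R=\pik/2$), at the cost of invoking collapse machinery that the paper's own, more elementary proof of this particular theorem avoids.
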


\begin{Rem}
Since  $B_R^{\kappa}$ is a topological disk for any $\kappa \in R$ and $R<\pik/2$, by Perelman's Stability Theorem \cite{Per1, Ka07} it follows that in the setting of the above theorem $X$ must be a topological disk too provided $\delta$ is sufficiently small.
\end{Rem}

\begin{proof}
Suppose the theorem is false. Then there exist $\epsilon>0$ and a contradicting sequence $(X^m_i, \dd_i)$ of Alexandrov spaces with $\curv\ge\kappa$ and $p_i\in X_i$ such that $B_R(p_i)=X_i$,  $\mathcal{H}^{m-1}(\partial X_i) > \mathcal{H}^{m-1}(\partial B_{R}^{\kappa})-\delta_i$ where $\delta_i \to 0$ as $i\to \infty$ but $\dd_{GH}(X_i, \overline{B_R^\kappa})\ge \eps$ for all $i$.

By compactness, passing to a subsequence we may assume that $(X_i, \dd_i, p_i)\to (X_\infty, \dd_\infty, p_\infty)$ in Gromov-Hausdorff topology for some $X_\infty$. We claim the following:

{\bf Claim} $\dim X_\infty=m$, i.e., this sequence is noncollapsing. 

Assuming the claim is true we see that $X_\infty$ also has $\curv\ge \kappa$ and satisfies $B_R(p_\infty)=X_\infty$. From the stability of $\mathcal H^{m-1}$ from \cite{BNS22}[Theorem 1.8] or \cite{Fuj-extr}[Theorem 1.1] we have that $\mathcal H^{m-1}(\partial X_i)\to \mathcal H^{m-1}(\partial X_\infty)$ and hence by Theorem~\ref{thm-vol-bound} we have that $\mathcal H^{m-1}(\partial X_\infty)=\mathcal{H}^{m-1}(\partial B_{R}^{\kappa})$. Now by Theorem \ref{main thm}, it follows that $X$ is isometric to $\overline{B_{R}^{\kappa}}$ . This is a contradiction with the assumption  $\dd_{GH}(X_i, \overline{B_R^\kappa})\ge \eps$ for all $i$.

Thus to finish the proof of the theorem we need only to establish the Claim above that the sequence does not collapse.

Suppose that it is false and $\dim X_\infty=k<m$. 

For each $i$ let $d_i=\dd_i(p_i, \partial X_i)$.
There are two possibilities:

{\bf Case 1} $\liminf_{i\to\infty}d_i=0$
or

{\bf Case 2} $d_i\ge d>0$ for all $i$ for some $d>0$.

Let us treat {\bf Case 1} first.

By passing to a subsequence we can assume $d_i\to 0$. Then there exists $q_i\in \partial X_i$ such that $d(p_i,q_i)\le d_i$. Then $B_{R+d_i}(q_i) = X_i$, and so by Theorem~\ref{thm-vol-bound}  we have that $ \mathcal{H}^{m-1}(\partial X_i) \le \mathcal{H}^{m-1}(\partial B_{R+d_i,+}^{\kappa} )$.

But $\mathcal{H}^{m-1}(\partial B_{R+d_i,+}^{\kappa} )\to \mathcal{H}^{m-1}(\partial B_{R,+}^{\kappa} )<\mathcal{H}^{m-1}(\partial B_{R}^{\kappa} )$.
This contradicts the assumption that $\mathcal{H}^{m-1}(\partial X_i) > \mathcal{H}^{m-1}(\partial B_{R}^{\kappa})-\delta_i$ where $\delta_i \to 0$. Therefore, Case 1 cannot occur.

We remark here that this argument fails if $\kappa>0, R=\pik/2$ since in this case $\mathcal{H}^{m-1}(\partial B_{R,+}^{\kappa} )=\mathcal{H}^{m-1}(\partial B_{R}^{\kappa} )$.

We now consider {\bf Case 2}. Clearly we will have shown that Case 2 is impossible as soon as we prove the following.

{\bf Claim} $\partial X_i\to X_\infty$ with respect to the ambient metrics on $\partial X_i$. 

To prove the claim we will need the following lemma. 

\begin{lem}\label{extr-dim-proper}
Let $Y^n$ be an Alexandrov space and $E\subset Y$ be extremal. If $\dim E=n$ then $E=Y$.
\end{lem}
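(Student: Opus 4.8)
The plan is to argue by contradiction using the local structure of extremal sets and the dimension theory of Alexandrov spaces. Suppose $E \subsetneq Y$ with $\dim E = n = \dim Y$. First I would pick a point $q \in \partial_Y E$ in the topological boundary of $E$ inside $Y$ (this is nonempty since $E$ is closed, proper, and nonempty, and $Y$ is connected) and pass to the tangent cone: by the general theory of extremal subsets (Perelman--Petrunin), $T_qE \subset T_qY = C(\Sigma_q)$ is again an extremal subset of the tangent cone, and $\dim T_q E = \dim E = n$ for $q$ in a full-measure subset of $E$; moreover one can arrange that $q$ is chosen so that $T_qE$ is a genuine proper extremal subcone. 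So it suffices to derive a contradiction at the level of cones, i.e. to show that an extremal subcone $C' = C(\Sigma') \subset C(\Sigma_q)$ with $\dim C' = n$ must be all of $C(\Sigma_q)$; equivalently that an extremal subset $\Sigma' \subsetneq \Sigma_q$ of the $(n-1)$-dimensional space of directions with $\dim \Sigma' = n-1$ is impossible. This sets up an induction on dimension, with the base case $n=1$ being immediate (a $1$-dimensional Alexandrov space is an interval or circle and its proper extremal subsets are finite sets of points, hence $0$-dimensional).

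For the inductive step, the key point is that a full-dimensional extremal subset is open. Concretely, at a point $x \in E$ where $T_xY \cong \mathbb{R}^n$ (a full-measure set of regular points), the extremal subset $T_xE \subset \mathbb{R}^n$ is a closed cone which, being extremal in $\mathbb{R}^n$, must be a linear subspace; if $\dim T_xE = n$ this forces $T_xE = \mathbb{R}^n$. Combined with the fact that for extremal subsets the dimension of $T_xE$ equals $\dim E$ on a dense set and is upper semicontinuous, I would conclude that $E$ contains a neighborhood of each of its regular-in-$E$ points, so $\Int_Y E \neq \varnothing$. The second key point is that $\partial_Y E$ cannot be "too large": since $E$ is extremal, its complement $Y \setminus E$ is locally "quasigeodesically invariant" in the sense that gradient curves of $\dist(\cdot, q)$ for $q \notin E$ starting outside $E$ stay outside $E$ — more precisely, $E$ being extremal means $Y \setminus E$ is not extremal but is "co-extremal," and a proper closed full-dimensional subset with nonempty interior whose complement is nonempty and open must have a nonempty boundary of Hausdorff dimension $\le n-1$; pushing this to a tangent cone at a boundary point and invoking the inductive hypothesis on $\Sigma_q$ yields the contradiction.

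The cleanest route, and the one I would actually write, avoids the induction by using the following structural fact: if $E$ is extremal in $Y^n$ and $x\in E$ is a point where $T_xY\cong\mathbb R^n$, then $T_xE$ is a convex cone in $\mathbb R^n$ (extremal subsets of $\mathbb R^n$ are exactly the cones over extremal subsets of $\mathbb S^{n-1}$, and at a regular point the only option consistent with $E$ being closed and extremal is a linear subspace). Since $\mathcal H^n$-a.e.\ point of $Y$ is regular, and $\dim E = n$ forces $\mathcal H^n(E) > 0$, there is a regular point $x$ of $Y$ that is a density point of $E$; at such a point $T_xE$ has full dimension, hence equals $\mathbb R^n$, hence $x \in \Int_Y E$. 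Thus $\Int_Y E \neq \varnothing$. Now I claim $\Int_Y E$ is also closed in $Y$: if $x_j \in \Int_Y E$ with $x_j \to x$, then $x \in E$ (as $E$ is closed), and running the same tangent-cone argument at $x$ — using that $E$ contains balls around the $x_j$ of definite relative size and the stability of extremal subsets under blow-up — shows $T_xE$ is full-dimensional, so $x \in \Int_Y E$. Since $Y$ is connected and $\Int_Y E$ is nonempty, open, and closed, $\Int_Y E = Y$, hence $E = Y$, the desired contradiction.

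The main obstacle I anticipate is making the "closedness of the interior" step rigorous: one needs a quantitative, scale-invariant statement that a full-dimensional extremal subset occupies a definite fraction of every small ball centered at its points, uniformly, so that this property passes to limits. This should follow from Fujioka's uniform volume bounds for extremal subsets (\cite{Fuj-uniform-bounds-extr}, cf.\ also \cite{Fuj-extr}) together with the fact that the blow-up of an extremal subset at any point is extremal in the blow-up of the ambient space; but it is exactly the point that requires care, and is where I would spend the bulk of the write-up.
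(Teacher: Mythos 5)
Your overall strategy (show $\Int_Y E\neq\varnothing$, then get $E=Y$ by an open--closed--connectedness argument) diverges from the paper, which after obtaining an interior ball argues globally: by \cite[Theorem 1.1]{Fuj-extr} a full-dimensional extremal set contains a ball around some regular point $p$, and then for an arbitrary $y\in Y$ one takes a point $x\in E$ on a shortest geodesic $[py]$ near $p$ and uses that $E$ is invariant under the gradient flow of the semiconcave function $d^2(\cdot,p)$, whose flow through $x$ reaches $y$; hence $y\in E$. This sweeping argument completely avoids the step where your proposal has its real difficulty.

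That difficulty is the closedness of $\Int_Y E$, and as written it is a genuine gap, which you yourself flag. If $x_j\in\Int_Y E$ converge to $x$, the balls around $x_j$ contained in $E$ may shrink arbitrarily fast compared with $d(x_j,x)$, so nothing guarantees they survive the blow-up at $x$ with ``definite relative size''; a density-type statement (a uniform lower volume fraction in small balls) would in any case not give that $E$ contains a neighborhood of $x$. Moreover, the limit point $x$ need not be a regular point of $Y$, so the classification of extremal subsets of $\R^n$ that powered your first step is unavailable there: concluding $T_xE=T_xY$ from full-dimensionality of $T_xE$ at a singular $x$ is exactly the lemma for cones, i.e.\ circular unless you run the induction on dimension that your ``cleanest route'' sets aside; and even $T_xE=T_xY$ does not by itself yield $x\in\Int_Y E$ without a further structural input. (The same issue already touches your first step, ``$T_xE=\R^n\Rightarrow x\in\Int_Y E$'' at a regular density point; that step can be repaired simply by citing \cite[Theorem 1.1]{Fuj-extr}, which directly provides a regular point with a ball contained in $E$.) A small side remark: extremal subsets of $\R^n$ are only $\varnothing$, $\{0\}$ and $\R^n$ — proper nontrivial linear subspaces are not extremal — though this does not affect your full-dimensional conclusion. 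To close the gap most efficiently, replace the open--closed argument by the flow argument above: once one interior ball around a regular point exists, invariance of extremal subsets under gradient flows of semiconcave functions finishes the proof in a few lines.
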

Here by $\dim E$ we mean either the topological or the Hausdorff dimension of $E$. They are the same   by \cite{Pet-Per-extremal}. 
\begin{proof}
If $\dim E=n$ then by \cite[Theorem 1.1]{Fuj-extr} there is a regular point $p\in Y$ such that a small ball around $p$ is contained in $E$. Let $y\in Y$ be arbitrary. Consider a shortest geodesic $[py]$. Then there is a point $x$ on this geodesic close to $p$ which is in $E$. let $f=d^2(\cdot, p)$. It is semiconcave and since $E$ is extremal it is invariant under the gradient flow $\phi_t$ of $f$. Hence $\phi_t(x)\in E$ for any $t\ge 0$. Since $x\in [py]$ there exists $t\ge 0$ such that $\phi_t(x)=y$. hence $y\in E$.
\end{proof}

Let us proceed with the proof of the {\bf Claim} above.

 After passing to a subsequence $\partial X_i$ subconverges to an extremal subset $E$ of $X_\infty$, see, for example, \cite{petrun-semiconcave}[Lemma 4.1.3]. If this is a proper subset then by Lemma~\ref{extr-dim-proper} $\dim E<\dim X_\infty<m$ which means that $\dim E\le m-2$.
 Theorem~\ref{thm-vol-bound-local} means that there is a universal $c=c(m)>0$ such that for all $x\in X_i, r\le \pi$ it holds that $\mathcal H^{m-1}(\partial X_i\cap B_r(x))\le c(m)r^{m-1}$. Now a simple covering argument implies that $\mathcal H^{m-1}(\partial X_i)\to 0$.
Indeed, $E$ has finite  $\mathcal H^{m-2}$ by \cite[Theorem 1.1]{Fuj-uniform-bounds-extr}. Hence there is a $C>0$ such that for any $\eps>0$, $E$ can be covered by countably many balls $B_{r_j}(y_j)$ with $r_i<\eps$ such that $\sum_j r_j^{m-2}<C$. Since $E$ is compact this collection of balls can be made finite so that $E\subset \cup_{j=1}^NB_{r_j}(y_j)$. Note that there exists  a small $\mu>0$ such that $ \cup_{j=1}^NB_{r_j}(y_j)$ contains the $\mu$-neighborhood of $E$.

These balls can be lifted to nearby balls $B_{r_j}(y_j^i)$ in $X_i$ which will cover $\partial X_i$ for all large $i$. Therefore, 
\[
\mathcal H^{m-1}(\partial X_i)\le \sum_{j=1}^N \mathcal H^{m-1}(\partial X_i\cap B_{r_j}(y_j^i) )\le \sum_{j=1}^N c(m) r_j^{m-1}\le \sum_{j=1}^N c(m) \eps r_j^{m-2}<\eps c(m) C
\]
for all large $i$. This shows that $\mathcal H^{m-1}(\partial X_i)\to 0$ if $E\ne X_\infty$, which is a contradiction. Therefore, Case 2 is impossible.

This proves the Claim that the convergence $X_i\to X$ is noncollapsing which finishes the proof of Theorem~\ref{thm-almost-rigid}.
\end{proof}

We now consider almost rigidity in the case $\kappa > 0, R= \pik/2$. We will prove
\begin{Thm}\label{thm-almost-rigid-2}
Given an integer $m$ and some $\kappa > 0$. Let $R = \pik/2$. For any $\eps > 0$, there exists $\delta(\kappa, m) > 0$ such that the following holds: Let $X^m$ be Alexandrov with $\curv \ge \kappa$. Assume $\overline{B_R(p)}=X$ for some $p \in X$ and that
\begin{equation*}
	\mathcal{H}^{m-1}(\partial X) > \mathcal{H}^{m-1}(\partial B_{R}^{\kappa})-\delta.
\end{equation*}
Then there exists $Y$, which is either $L^{m, \kappa}_{\alpha}$ for some $\alpha \in (0, \pi]$ or the closed ball of radius $R$ in $M^{m-1}(\kappa)$, so that
\begin{equation*}
	\dd_{GH}(X, Y) < \eps.
\end{equation*}
\end{Thm}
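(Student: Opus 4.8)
The plan is to argue by contradiction and compactness, as in the proof of Theorem~\ref{thm-almost-rigid}, but now allowing collapse. Suppose the statement fails; then there is $\eps>0$ and a sequence $(X_i^m,\dd_i,p_i)$ with $\curv\ge\kappa$, $\overline{B_R(p_i)}=X_i$ with $R=\pik/2$, and $\mathcal H^{m-1}(\partial X_i)>\mathcal H^{m-1}(\partial B_R^\kappa)-\delta_i$ with $\delta_i\to 0$, but with $\dd_{GH}(X_i,Y)\ge\eps$ for every admissible model $Y$. Pass to a Gromov--Hausdorff convergent subsequence $(X_i,\dd_i,p_i)\to(X_\infty,\dd_\infty,p_\infty)$, and let $k=\dim X_\infty$. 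If $k=m$, then by the stability of $\mathcal H^{m-1}$ on boundaries (\cite{BNS22}[Theorem 1.8] or \cite{Fuj-extr}[Theorem 1.1]), $\mathcal H^{m-1}(\partial X_\infty)=\mathcal H^{m-1}(\partial B_R^\kappa)$, and $\overline{B_R(p_\infty)}=X_\infty$, so Theorem~\ref{main thm} gives that $X_\infty$ is isometric to $L^{m,\kappa}_\alpha$ for some $\alpha\in(0,\pi]$, a contradiction. So we may assume the sequence collapses, $k\le m-1$.

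Next I would show the collapse can only drop the dimension by exactly one, i.e.\ $k=m-1$, and the limit is a $\kappa$-ball in $M^{m-1}(\kappa)$. As in the proof of Theorem~\ref{thm-almost-rigid} Case~2, after passing to a subsequence $\partial X_i$ subconverges to an extremal subset $E\subseteq X_\infty$ (see \cite{petrun-semiconcave}[Lemma 4.1.3]). The uniform local bound of Theorem~\ref{thm-vol-bound-local}, $\mathcal H^{m-1}(\partial X_i\cap B_r(x))\le c(m)r^{m-1}$, combined with the finiteness of $\mathcal H^{m-2}$ of extremal subsets (\cite{Fuj-uniform-bounds-extr}[Theorem 1.1]) and the covering argument already used, forces $\mathcal H^{m-1}(\partial X_i)\to 0$ unless $\dim E=m-1$ and $E=X_\infty$ (by Lemma~\ref{extr-dim-proper} applied inside $X_\infty$, any extremal subset of dimension $\dim X_\infty$ is all of $X_\infty$, and $\dim E\le\dim X_\infty=k$). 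Since $\mathcal H^{m-1}(\partial X_i)\to\mathcal H^{m-1}(\partial B_R^\kappa)>0$, we conclude $k=m-1$, $E=X_\infty$, and moreover $\partial X_i\to X_\infty$ in the ambient sense with $\mathcal H^{m-1}(X_\infty)\ge\mathcal H^{m-1}(\partial B_R^\kappa)$ by lower semicontinuity of Hausdorff measure under this convergence (the local bound gives the opposite inequality in the limit as well, so in fact $\mathcal H^{m-1}(X_\infty)=\mathcal H^{m-1}(\partial B_R^\kappa)$).

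It remains to identify $X_\infty$. It is an $(m-1)$-dimensional Alexandrov space with $\curv\ge\kappa$; I would show $\overline{B_R(x_0)}=X_\infty$ for a suitable point $x_0$ (the limit of $p_i$, using that $\mathrm{diam}(X_i)\le 2R$ and that $p_i$ is a center). Thus $X_\infty$ is an $(m-1)$-dimensional Alexandrov space of $\curv\ge\kappa$ with $\overline{B_{\pik/2}(x_0)}=X_\infty$ and maximal boundary-measure $\mathcal H^{m-1}(X_\infty)=\mathcal H^{m-1}(\partial B_{\pik/2}^\kappa)=\mathcal H^{m-1}(\SS^{m-1})/\text{(appropriate normalization)}$. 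Now I must be careful: the correct comparison is that an $(m-1)$-dimensional closed Alexandrov space of $\curv\ge\kappa$ has $\mathcal H^{m-1}$ at most that of $M^{m-1}(\kappa)$ with equality forcing it to be $M^{m-1}(\kappa)$ (Bishop--Gromov rigidity), while if it has boundary the bound is smaller. Here $X_\infty$ may a priori have boundary, but its $\mathcal H^{m-1}$ equals that of the \emph{full} sphere $\partial B_{\pik/2}^\kappa\subset M^m(\kappa)$, which is exactly $\mathcal H^{m-1}(M^{m-1}(\kappa))$ when $R=\pik/2$; so equality in Bishop--Gromov forces $X_\infty$ to be the round sphere $M^{m-1}(\kappa)$ --- but it also must satisfy $\overline{B_{\pik/2}(x_0)}=X_\infty$, which is automatic. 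Hmm, this would give $Y=M^{m-1}(\kappa)$, whereas the theorem states $Y$ is the closed $\kappa$-ball of radius $R$ in $M^{m-1}(\kappa)$; I would reconcile this by checking the normalization of $\mathcal H^{m-1}(\partial B_{R,+}^\kappa)$ versus $\mathcal H^{m-1}(\partial B_R^\kappa)$ carefully, and tracking whether the limit point $x_0$ is interior or boundary --- in the collapsed case the relevant model is the closed ball $\overline{B_R^\kappa}\subset M^{m-1}(\kappa)$, whose boundary-plus-interior $\mathcal H^{m-1}$ matches. Either way, once $X_\infty$ is pinned down as this model, $\dd_{GH}(X_i,X_\infty)\to 0$ contradicts $\dd_{GH}(X_i,Y)\ge\eps$, finishing the proof.

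\textbf{Main obstacle.} The delicate point is the last step: showing that the collapsed limit $X_\infty$, which is only known to be an $(m-1)$-dimensional Alexandrov space with $\curv\ge\kappa$ carrying $\mathcal H^{m-1}$ equal to the maximal model value, is \emph{forced} to be exactly the model (and getting the right model --- sphere vs.\ ball vs.\ lens). This requires a clean maximal-volume rigidity statement in dimension $m-1$ together with a careful accounting of whether $\partial X_\infty=\varnothing$ and of the geometric meaning of the limit of $p_i$ and $\partial X_i$; it is also where the special feature $R=\pik/2$ (so that $\mathcal H^{m-1}(\partial B_{R,+}^\kappa)=\mathcal H^{m-1}(\partial B_R^\kappa)$) is essential and where the argument of Theorem~\ref{thm-almost-rigid} genuinely breaks down.
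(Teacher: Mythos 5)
Your overall strategy (contradiction, GH compactness, splitting into the noncollapsed case handled by Theorem~\ref{main thm} and a collapsed case with $\dim X_\infty=m-1$) matches the paper up to the point where the limit must be identified, but the identification step contains a genuine error and skips what is in fact the bulk of the paper's proof. The claim that ``$\mathcal H^{m-1}(X_\infty)\ge \mathcal H^{m-1}(\partial B_R^\kappa)$ by lower semicontinuity'' is false: under this collapse the boundary converges \emph{two-to-one} onto the limit. The paper shows, via strainer/fibration maps whose generic fiber is a compact connected $1$-dimensional MCS-space, hence (using Corollary~\ref{cor-bry-conical} to exclude vertices of degree $\ge 3$) an interval whose two endpoints lie on $\partial X_i$, that $\partial X_i\setminus(\text{small exceptional set})$ is a locally $(1+\delta)$-bilipschitz $2$-fold cover of the corresponding part of $X_\infty$. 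This gives $\mathcal H^{m-1}(X_\infty)=\tfrac12\,\mathcal H^{m-1}(\SS^{m-1})$ (after rescaling to $\kappa=1$), not the full value; a thin lens $L^{m,1}_\alpha$ with $\alpha\to 0$ illustrates why your inequality fails, since its boundary area is about twice the area of the limiting hemisphere. This factor of $2$ is exactly the discrepancy you noticed (your computation pointing to the full sphere $M^{m-1}(\kappa)$ rather than the ball), and it cannot be ``reconciled by normalization'': your route via Bishop--Gromov rigidity for closed $(m-1)$-spaces is simply the wrong comparison.

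With the correct value $\tfrac12\mathcal H^{m-1}(\SS^{m-1})$, equality in the volume comparison for $\overline{B_{\pi/2}(p_\infty)}=X_\infty$ with $\curv\ge 1$ only tells you that $X_\infty$ is either the hemisphere $\SS^{m-1}_+$ (i.e.\ the closed $\pik/2$-ball in $M^{m-1}(\kappa)$) \emph{or} a quotient of the hemisphere by an isometric involution of its boundary, i.e.\ $\RP^{m-1}$ or a join $\RP^k * \SS^l$ with $k+l=m-2$, $k>0$. Excluding these extra models is the hard part of the theorem and is entirely absent from your proposal: $\RP^{m-1}$ is ruled out by a $\pi_1$-surjectivity lemma for limits of the contractible spaces $X_i$ (Lemma~\ref{lem-conv-pi1}), and the intermediate joins require a delicate analysis of the local structure of collapse near the singular set (Theorem~\ref{thm-local-collapse}, after Yamaguchi--Shioya), blow-up limits and their souls, and a spheroid-lifting argument that contradicts contractibility of $X_i$ (e.g.\ via $\pi_2(S(\RP^{m-2}))\ne 0$). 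So the proposal has two concrete gaps: the measure computation in the collapsed case is wrong (missing the $2$-to-$1$ structure of the collapse of $\partial X_i$), and the resulting rigidity dichotomy and the topological exclusion of the quotient models are not addressed.
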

 
\begin{proof}
To unburden the exposition we skip some technical details in the proof. 

By rescaling we need only consider the case $\kappa = 1$. Suppose we have $\mathcal H^{m-1}( \partial X_i)\to \mathcal H^{m-1}( \SS^{m-1})$. If $\dim X_\infty=m$ then Theorem \ref{main thm} gives that $X_\infty$ is $L^{m,1}_{\alpha}$ for some $\alpha \in (0, \pi]$.

Suppose then that $\dim X_\infty=k<m$. The local bound given by Theorem~\ref{thm-vol-bound-local} means that there is a universal $c=c(m)>0$ such that for all $x\in X_i, r\le \pi$ it holds that $\mathcal H^{m-1}(\partial X_i\cap B_r(x))\le cr^{m-1}$. This  easily implies that if $k<m-1$ then $\mathcal H^{m-1}(\partial X_i)\to 0$ (see for example the computation in the previous proof). This gives a contradiction and hence $k<m-1$ is impossible. Thus $k=m-1$ is the only possibility if collapse does occur.

By the claim above $\partial X_i\to X_\infty$ with respect to ambient metrics, i.e., $\partial X_i$ becomes denser and denser in $X_i$. 

Let $q_\infty\in  X_\infty$ be a regular point. Fix a small $\delta>0$.  Pick $q_i\in  X_i$ converging to $q_\infty$. Then by looking at an $\epsilon$-strainer map $F\co X_\infty\to \R^{m-1}$  near $q_\infty$ and lifting it to $F_i\co X_i\to \R^{m-1}$ we get that for all large $i$ near $q_i$, $F_i$ is a bundle map with fiber a compact connected $1$-dimensional MCS-space. This means it's a finite graph.

By Corollary~\ref{cor-bry-conical} it cannot contain any vertices of degree $\ge 3$ and hence the fiber must be an interval.  The boundary points of the fiber intervals are exactly the points in  $\partial X_i$. Thus, restricted to $\partial X_i$ near $q_i$ we get that $F_i$ is a $2$-fold product cover of a disk  in $\R^{m-1}$ and on each connected component it is a $(1+\delta)$ bilipschitz homeomorphism \cite[Proposition 3.3]{Fuj-extr}.  

 The set $S=X_\infty\setminus X_\infty(m-1,\delta)$, where $X_\infty(m-1,\delta)$ denote the points which are (m-1) $\delta$-strained, is compact and has Hausdorff dimension $\le m-2$. Take a small $\eps>0$. By the same argument as in the proof of Case 2 earlier we can find finitely many open balls $B_{r_j}(y_j), j=1,\ldots ,N$, covering $S$ such $\sum_{j=1}^N r_j^{m-1}<\eps$ and hence  the corresponding balls $B_{r_j}(y_j^i)$ have the property that $\mathcal H^{m-1}(\partial X_i\cap (\cup_j B_{r_j}(y_j^i)))\le c(m) \eps$.
Outside of the union of these balls by a standard argument \cite{BGP, Yam-conv-theorem} the maps $F^{-1}\circ F_i$ can be glued into a global map  $\Phi_i\co X_i \to X_\infty$ which is locally $(1+\delta)$-bilipschitz from $\partial X_i\setminus  \cup_j B_{r_j}(y_j^i)$ to $X_\infty \setminus  \cup_j B_{r_j}(y_j)$. By above it is also 2-to-1. Since $\eps$ and $\delta$ can be chosen to be arbitrarily small this gives that 
 $\mathcal H^{m-1}(X_\infty)=\frac{1}{2} \mathcal H^{m-1}(\SS^{m-1})$.

Since $\curv X_\infty\ge 1$ and $\overline{B_{\pik/2}(p_\infty)}=X_\infty$, by absolute volume comparison we have that $\mathcal H^{m-1} (B(p_\infty, \pi/2))\le \frac{1}{2} \mathcal H^{m-1}(\SS^{m-1})$. Thus we have an equality in the absolute volume comparison. This gives \cite[Theorem 5.2]{L15} that $X_\infty$ must be either isometric to a hemisphere in $\SS^{m-1}$ or to a quotient of  such hemisphere by an isometric involution of its boundary.
This can also be thought of the quotient of $\SS^{m-1}$ by an isometric involution of $\R^{m}$ which is given by a diagonal matrix with $\pm 1$ on the diagonal.

 Such quotients are easily understood depending on the number of $-1$'s on the diagonal. There must be at least one $-1$. If it's exactly one we get a hemisphere $\SS^{m-1}_+$. If all are $-1$'s we get $\RP^{m-1}$.
 In between we get spherical joins of $\RP^k$ and $\SS^l$ where $k+l=m-2$.
 
 A hemisphere can occur as the limit of Grove-Petersen examples.
The case  $X_\infty=\RP^{m-1}$ can be ruled out because it's not simply connected and $X_\infty$ must be simply connected since all $X_i$ are (see Lemma~\ref{lem-conv-pi1} below).  It remains to rule out  the intermediate cases. 

It is well known that  any compact Alexandrov space $Y$  admits a nondecreasing contractibility function $\rho\co \R_+\to \R_+$ such that $\rho(r)\ge r$ and $\rho(r)\to 0$ as $r\to 0$. This means that there is $r_0>0$ such that  for all any $r<r_0$ every ball $B_r(y)$ in $Y$ is contractible inside the ball $B_{\rho(r)}(y)$. Moreover by \cite{petrun-semiconcave}, \cite{Kap-noncol} or \cite{Yam-lipsch-contr} there is a stronger contractibility statement: every ball $B_r(y)$ is contained in a closed set $C_r(y)$ of diameter $\le \rho(r)$ where $C_r(y)$  is a superlevel set of a strongly concave function. In particular all sets $C_r(y)$  and their intersections are strongly convex and contractible. In particular $X$ is homotopy equivalent to the nerve of any finite covering by  $C_r(y)$'s. For a stronger statement see \cite{Yam-good-cov}.

Also, using the contractibility function it follows that any two sufficiently close maps from a finite dimensional CW complex to $Y$ are homotopic (the required closeness might a priori depend on the dimension of the CW complex.) 

This easily implies that given a convergent sequence of Alexandrov spaces $X_i^m\to X_\infty$ with $\curv\ge \kappa, \diam\le D$ there are well defined maps $\pi_k(X_i)\to \pi_k(X_\infty)$ for large $i$ \cite{Per-collapsing-no-extr}. These are obtained by using a fine triangulation of a spheroid in $X_i$ and using contractibility radius to fill in the corresponding triangulation to get a spheroid in $X_i$, and the same with the homotopy of spheroids. 

In general the map  $\pi_k(X_i)\to \pi_k(X_\infty)$ need not be surjective as not every spheroid in $X_\infty$ can be lifted to $X_i$. One obvious exception is when $k=1$ since any loop in $X_\infty$ can be lifted to a loop in $X_i$ using a piecewise approximation by broken geodesics.

This gives

\begin{lem}\label{lem-conv-pi1}
Let $X^m_i\to X$ be a convergent sequence of Alexandrov spaces with with $\curv\ge \kappa, \diam\le D$.
Then for all large $i$ there is an epimorphism $\pi_1(X_i)\to \pi_1(X_\infty)$. In particular if all $X_i$ are simply connected then so is $X_\infty$.
\end{lem}
Note that the lemma is easily seen to fail for pointed convergence to a noncompact limit.

\begin{rem}
In general Lemma~\ref{lem-conv-pi1} works not just for Alexandrov spaces but in a much broader setting where one has a $\pi_1$ contractibility function for $X$, i.e. $X$ has the property that any loop contained in a ball of radius $r$ is contractible in a  ball of radius $\rho(r)$ where $\rho(r)\to 0$ as $r\to 0$.. In particular it applies to Ricci limits \cite[Theorem 1.2]{wang-ricci-limits}.
\end{rem}
Lemma \ref{lem-conv-pi1} in particular applies in the case we are interested in when all $X_i$ have $\curv \ge 1$ and nonempty boundary. Then all $X_i$ are contractible and in particular simply connected. hence $X_\infty$ is simply connected too. This rules out the possibility that $X_\infty=\RP^{m-1}$.

Now consider $X_\infty=\RP^k*\SS^l$ with $k+l=m-2$ and $k>0 , l\ge 0$.

We claim that in this case every spheroid in $X_\infty$ can be lifted to $X_i$ for all large $i$.

Let $f\co \SS^n\to X_\infty$ be a spheroid. We can choose a very fine triangulation of $f$ and try to lift simplices skeleta by skeleta. As was explained it's easy to construct a lifting on the 1-skeleton. To do an induction step we need to be able to fill a simplex $f_i\co \Delta^s\to  X_i$ provided a map on its boundary has already been constructed. This is easy to do near regular points in $X_\infty$ since the regular fiber is an interval and hence small balls in $X_i$ close to the balls around regular points in $X_\infty$ are contractible. However, this argument fails near singular points of $X_\infty$ and a different argument is required there. The singular set in $X$ is an isometric copy of $\SS^l$ which is extremal with normal spaces of directions isometric to $\RP^k$.

Let us assume for simplicity that $l=0$. Then $X_\infty$ is the spherical suspension $S(\RP^{m-2})$ over $\RP^{m-2}$ and there are only two isolated singular points - the vertices of the suspension. The general case is similar to this one but locally everything is crossed with a disk in $\R^l$.

Let $p$ be one of the vertices of $X_\infty=S(\RP^{m-2})$  and let $p_i\in X_i$ converge to $p$. 

{\bf Claim}: There is a small $r_0>0$ such that for any $r<r_0$ the points $p_i\to p$ can be chosen so that balls $B_r(p_i)$ are contractible for all large $i$. 

This obviously allows filling in simplexes in $B_r(p_i)$ and hence shows that $f$ can be lifted to a nearby spheroid $f_i\co S^n\to X_i$. Hence for all large $i$ the map $\pi_n(X_i)\to \pi_n(X_\infty)$ is onto. This leads to a contradiction since $X_i$ is contractible and $S(\RP^{m-2})$ is not. More specifically $\pi_2(S(\RP^{m-2}))\cong H_2(S(\RP^{m-2}))\cong \Z_2\ne 0$.

Thus it remains to prove the Claim above.

We can choose $r_0$ small enough so that for all $r<r_0$ the unit ball around the base point in $(X_\infty, \frac{1}{r}d_\infty, p)$ is  $o(1)$-close to  $B_1(o_p)$ in $T_pX_\infty\cong C(\RP^{m-2})$.
By the work of  Yamaguchi-Shioya \cite{Shi-Yam-collapsing}  and Yamaguchi \cite{yam-collapsing-ess} collapsing near $p$ can be understood as follows.

\begin{thm}\label{thm-local-collapse}

Let  $(X^m_i, \dd_i, q_i)\to (X_\infty, \dd_\infty, q_\infty)$ where all $X_i$  have  $\curv\ge \kappa$. Let $p\in X_\infty$.
Then for all sufficiently small $r$ there is $p_i\in X_i$ converging to $p$ such that one of the following two possibilities holds:

\begin{enumerate}
\item After passing to a subsequence $\dd_i(\cdot, p_i)$ has no critical points in $B_r(p_i)\setminus\{p_i\}$ or
\item There is a sequence $\delta_i\to 0$ such that 
\begin{enumerate}
\item For any $\lambda>1$ after for all sufficiently large $i$ the function $d(\cdot, p_i)$ has no critical points in the annulus $\{\lambda\delta_i\le d(x, p_i)\le r\}$
\item for any subsequential limit $(X_i, \frac{1}{\delta_i}\dd_i, p_i)\to (Y, \dd_Y, y_0)$ it holds that  $\dim Y\ge \dim X_\infty+1$.
\end{enumerate}
\end{enumerate}

\end{thm}
Note that we obviously have that in the above theorem $Y$ has $\curv\ge 0$.

We want to apply this theorem in our situation. If the first alternative in Theorem~\ref{thm-local-collapse} holds then as discussed we can lift any spheroid from $X_\infty$ to $X_i$ which eventually leads to a contradiction.
Let us examine what happens if the second alternative holds.
Since in our case $\dim X_\infty=m-1$ we must have that $\dim Y=m$. This means that the convergence  $(X_i, \frac{1}{\delta_i}\dd_i, p_i)\to (Y, \dd_Y, y_0)$ is noncollapsing.

It is easy to see that the ideal cone $Y(\infty)$ of $Y$ given by the limit $(Y, \frac{1}{\delta_i}\dd_Y, y_0)\to (Y(\infty), \dd_{Y(\infty)}, o_y)$  as $\delta_i \to 0$ has dimension at least $m-1$.
This implies that the soul of $Y$ has dimension 1 or 0. If it has dimension $0$ then $Y$ is contractible which by stability implies that all balls $\bar B_r(p_i)$ are contractible for large $i$ and we are again in the situation where we can lift spheroids.

Let us examine what happens if the soul $S$ of $Y$ is 1-dimensional. Then $S$ is $\mathbb{S}^1$ and by a standard argument using the splitting theorem we get that $Y$ is a flat bundle over $S_1$. More precisely, there is a nonnegatively curved Alexandrov space $Z^{m-1}$ and a point $z_o\in Z$ (the soul of $Z$)  such that $Y$ is isometric to $([0, 1]\times Z)/\sim$ where we glue $\{0\}\times Z$ to $\{1\}\times Z$ by some isometry $\phi\co Z\to Z$ which fixes $z_0$.
This in particular implies that for all large $R$ the spheres $S_R(y_0)$ have infinite fundamental groups. By the stability theorem and Morse theory this implies that for all large $i$ the spheres $S_r(p_i)$ have infinite fundamental groups as well.  But on the other hand the structure of collapsing away from $p$ can be understood by the fibration theorem since $M_\infty$ is smooth there. This gives that metric spheres $S_r(p_i)$ are homotopy equivalent  to the total space of an interval bundle over $S_r(p)$ which is homeomorphic to $ \RP^{m-1}$. This means that  $\pi_1(S_r(p_i))$ must be finite for all large $i$. This is a contradiction which shows that this case is impossible.

This means that  the case  $X_\infty=\RP^k*\SS^l$ with $k+l=m-2$ and $k>0 , l\ge 0$ cannot occur at all and the only possibility for $X_\infty$ is the round hemisphere $\SS^{m-1}_+$. 
\end{proof}

Next we consider the second part of Theorem \ref{thm-almost-rigidity}. 
The case where $\kappa > 0$ and $R = \pik/2$ follows easily from the first part of the Theorem since $\mathcal{H}^{m-1}(\partial B_{R,+}^{\kappa}) = \mathcal{H}^{m-1}(\partial B_{R}^{\kappa})$. We have
\begin{Thm}\label{thm-almost-rigid-4}
Given an integer $m$ and some $\kappa > 0$. Let $R = \pik/2$. For any $\eps > 0$, there exists $\delta(\kappa, m) > 0$ such that the following holds: Let $X^m$ be Alexandrov with $\curv \ge \kappa$. Assume that $\overline{B_R(p)}=X$ for some $p \in \partial X$ and that
 \begin{equation*}
	\mathcal{H}^{m-1}(\partial X) > \mathcal{H}^{m-1}(\partial B_{R, +}^{\kappa})-\delta.
\end{equation*}
Then there exists $Y$, which is either $L^{m, \kappa}_{\alpha}$ for some $\alpha \in (0, \pi/2]$ or the closed ball of radius $R$ in $M^{m-1}(\kappa)$, so that
\begin{equation*}
	\dd_{GH}(X, Y) < \eps.
\end{equation*}
\end{Thm}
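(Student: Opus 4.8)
The plan is to reduce this to Theorem~\ref{thm-almost-rigid-2} and then use the extra hypothesis $p\in\partial X$, together with the refined rigidity in the second part of Theorem~\ref{main thm}, to discard the lenses of half-angle larger than $\pi/2$.

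The first observation is that when $\kappa>0$ and $R=\pik/2$ one has $\mathcal H^{m-1}(\partial B_{R,+}^{\kappa})=\mathcal H^{m-1}(\partial B_{R}^{\kappa})$, so the hypothesis on $X$ coincides exactly with the hypothesis of Theorem~\ref{thm-almost-rigid-2} (which only asks $\overline{B_R(p)}=X$ for some $p\in X$, a condition implied by $p\in\partial X$). I would then argue by contradiction: if the theorem fails there are $\eps>0$, Alexandrov spaces $X_i^m$ with $\curv\ge\kappa$, points $p_i\in\partial X_i$ with $\overline{B_R(p_i)}=X_i$, and $\delta_i\to 0$ with $\mathcal H^{m-1}(\partial X_i)>\mathcal H^{m-1}(\partial B_{R,+}^{\kappa})-\delta_i$, but $\dd_{GH}(X_i,Y)\ge\eps$ for every $Y$ that is either $L^{m,\kappa}_\alpha$ with $\alpha\in(0,\pi/2]$ or the closed ball of radius $R$ in $M^{m-1}(\kappa)$. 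Passing to a subsequence so that $(X_i,\dd_i,p_i)\to(X_\infty,\dd_\infty,p_\infty)$ in the Gromov--Hausdorff sense, and running the proof of Theorem~\ref{thm-almost-rigid-2} without change (that argument identifies the limit $X_\infty$ itself, not merely a model close to $X_i$), we get that $X_\infty$ is isometric either to $L^{m,\kappa}_\alpha$ for some $\alpha\in(0,\pi]$ or to the closed ball of radius $R$ in $M^{m-1}(\kappa)$. In the latter case $X_\infty$ is itself an admissible $Y$, contradicting $\dd_{GH}(X_i,X_\infty)\to 0$.

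It remains to exclude the case $X_\infty=L^{m,\kappa}_\alpha$ with $\alpha>\pi/2$. Since $\dim X_\infty=m$ the convergence is noncollapsed, so it preserves boundaries: $(X_i,\partial X_i)\to(X_\infty,\partial X_\infty)$, whence $p_\infty\in\partial X_\infty$; the relation $\overline{B_R(p_i)}=X_i$ passes to the limit to give $\overline{B_R(p_\infty)}=X_\infty$; and $\mathcal H^{m-1}(\partial X_i)\to\mathcal H^{m-1}(\partial X_\infty)$ by boundary-volume stability (\cite[Theorem 1.8]{BNS22} or \cite[Theorem 1.3]{Fuj-extr}), so $\mathcal H^{m-1}(\partial X_\infty)=\mathcal H^{m-1}(\partial B_{R,+}^{\kappa})$. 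Thus $X_\infty$ satisfies the hypotheses of the second part of Theorem~\ref{main thm} in the case $\kappa>0$, $R=\pik/2$, which forces $\alpha\in(0,\pi/2]$; so $X_\infty$ is again an admissible $Y$, a contradiction, and the theorem follows. (One can also bypass Theorem~\ref{main thm} here by a direct computation in $M^m(\kappa)$: writing $L^{m,\kappa}_\alpha$ as the intersection of the closed hemispheres with inward unit normals $v_1,v_2$, a boundary point $q$ with $\overline{B_{\pik/2}(q)}=L^{m,\kappa}_\alpha$ must lie in the convex cone spanned by $v_1,v_2$ and be orthogonal to one of them, which is solvable exactly when the angle between $v_1$ and $v_2$ is at least $\pi/2$, i.e.\ when $\alpha\le\pi/2$.)

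I do not expect a genuine obstacle here: once the volume hypothesis is recognized as identical to that of Theorem~\ref{thm-almost-rigid-2}, the whole content is a bookkeeping check, namely verifying that the noncollapsed limit $X_\infty$ still carries \emph{both} a boundary point with $\overline{B_R(\cdot)}=X_\infty$ \emph{and} the maximal boundary volume $\mathcal H^{m-1}(\partial X_\infty)=\mathcal H^{m-1}(\partial B_{R,+}^{\kappa})$, since both are needed to invoke the sharper rigidity statement. Everything else is a direct application of results already established.
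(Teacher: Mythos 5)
Your proposal is correct and follows essentially the same route as the paper, which disposes of this case by noting $\mathcal{H}^{m-1}(\partial B_{R,+}^{\kappa})=\mathcal{H}^{m-1}(\partial B_{R}^{\kappa})$ at $R=\pik/2$ and reducing to Theorem~\ref{thm-almost-rigid-2}. Your extra bookkeeping — passing $p_i\in\partial X_i$, $\overline{B_R(p_i)}=X_i$ and the boundary-volume equality to the noncollapsed limit and invoking the second part of Theorem~\ref{main thm} (or your direct hemisphere-normal computation) to exclude $\alpha>\pi/2$ — is exactly the detail the paper leaves implicit, and it is carried out correctly.
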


For the last case, we have
\begin{Thm}\label{thm-almost-rigid-3}
Given an integer $m$, $\kappa \in \R$ and $R > 0$. If $\kappa > 0$, assume in addition that $R < \pik/2$. For any $\eps > 0$, there exists $\delta(\kappa, m, R) > 0$ such that the following holds: Let $X^m$ be Alexandrov with $\curv \ge \kappa$. Assume that $\overline{B_R(p)}=X$ for some $p \in \partial X$ and that
 \begin{equation*}
	\mathcal{H}^{m-1}(\partial X) > \mathcal{H}^{m-1}(\partial B_{R, +}^{\kappa})-\delta.
\end{equation*}
Then
\begin{equation*}
	\dd_{GH}(X, \overline{B_{R,+}^{\kappa}}) < \eps.
\end{equation*}
\end{Thm}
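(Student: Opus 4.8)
The plan is to argue exactly as in the proof of Theorem~\ref{thm-almost-rigid}: by contradiction and Gromov--Hausdorff compactness, the statement reduces to ruling out collapse, and then rigidity (Theorem~\ref{main thm}) closes the argument. Suppose the claim fails; then there is a sequence $(X_i^m,\dd_i,p_i)$ of Alexandrov spaces of $\curv\ge\kappa$ with $p_i\in\partial X_i$, $\overline{B_R(p_i)}=X_i$, and $\mathcal H^{m-1}(\partial X_i)>\mathcal H^{m-1}(\partial B_{R,+}^{\kappa})-\delta_i$, where $\delta_i\to0$, but $\dd_{GH}(X_i,\overline{B_{R,+}^{\kappa}})\ge\eps$ for all $i$. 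Passing to a subsequence, $(X_i,\dd_i,p_i)\to(X_\infty,\dd_\infty,p_\infty)$. If this convergence is noncollapsing, then $X_\infty$ is Alexandrov of $\curv\ge\kappa$ with $\overline{B_R(p_\infty)}=X_\infty$; since the boundaries converge and $p_i\in\partial X_i$, we get $p_\infty\in\partial X_\infty$, and by stability of the boundary measure (\cite[Theorem~1.8]{BNS22} or \cite[Theorem~1.3]{Fuj-extr}) together with the upper bound of Theorem~\ref{thm-vol-bound} we obtain $\mathcal H^{m-1}(\partial X_\infty)=\mathcal H^{m-1}(\partial B_{R,+}^{\kappa})$. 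Since $\kappa\le0$, or $\kappa>0$ and $R<\pik/2$, the second part of Theorem~\ref{main thm} then forces $X_\infty$ to be isometric to $\overline{B_{R,+}^{\kappa}}$, contradicting $\dd_{GH}(X_i,\overline{B_{R,+}^{\kappa}})\ge\eps$. Note that, in contrast with Theorem~\ref{thm-almost-rigid}, no separate ``$p_i$ near the boundary'' case appears here, since the hypothesis is already stated in terms of the half-ball and $p_i$ lies on $\partial X_i$ exactly. Thus the whole matter comes down to excluding collapse.

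So I would assume $\dim X_\infty=k<m$ and derive a contradiction. After a further subsequence, $\partial X_i$ converges to an extremal subset $E\subseteq X_\infty$ (see \cite[Lemma~4.1.3]{petrun-semiconcave}). If $E$ is a proper subset, then $\dim E<\dim X_\infty$ by Lemma~\ref{extr-dim-proper}, hence $\dim E\le m-2$; the same bound holds if $E=X_\infty$ but $k\le m-2$. In either of these subcases the covering estimate from Case~2 of the proof of Theorem~\ref{thm-almost-rigid} applies without change: $E$ has finite $\mathcal H^{m-2}$-measure by \cite[Theorem~1.1]{Fuj-uniform-bounds-extr}, so it can be covered by finitely many balls $B_{r_j}(y_j)$ with $r_j<\eps$ and $\sum_j r_j^{m-2}<C$, and lifting these to $X_i$ and applying the local bound $\mathcal H^{m-1}(\partial X_i\cap B_r(x))\le c(m,\kappa)\,r^{m-1}$ of Theorem~\ref{thm-vol-bound-local} gives $\mathcal H^{m-1}(\partial X_i)\le c(m,\kappa)\,\eps\,C$ for all large $i$; hence $\mathcal H^{m-1}(\partial X_i)\to0$, contradicting the hypothesis. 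The only surviving possibility is $E=X_\infty$ with $\dim X_\infty=m-1$, that is, $\partial X_i$ becoming dense in an $(m-1)$-dimensional limit.

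This last case is the main difficulty, and I would handle it as in the proof of Theorem~\ref{thm-almost-rigid-2}. Over the regular part of $X_\infty$, the Shioya--Yamaguchi collapse theory (\cite{Shi-Yam-collapsing,yam-collapsing-ess}) presents $X_i$ as a fiber bundle whose fiber is a compact connected one-dimensional MCS-space, i.e.\ a finite graph; by Corollary~\ref{cor-bry-conical} it has no vertex of degree $\ge3$, so it is an interval or a circle, and the density of $\partial X_i$ forces the interval case over a dense full-measure part of $X_\infty$ (a circle fiber would leave $\partial X_i$ empty there). Hence $\partial X_i$ is generically a two-to-one cover of $X_\infty$, and exactly as in the proof of Theorem~\ref{thm-almost-rigid-2} one gets $\mathcal H^{m-1}(\partial X_i)\to 2\,\mathcal H^{m-1}(X_\infty)$. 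Since $X_\infty$ is an $(m-1)$-dimensional Alexandrov space of $\curv\ge\kappa$ with $\overline{B_R(p_\infty)}=X_\infty$, absolute volume comparison gives $\mathcal H^{m-1}(X_\infty)\le\mathcal H^{m-1}(\bar D)$, where $\bar D$ denotes the closed $R$-ball in $M^{m-1}(\kappa)$. On the other hand, $\partial B_{R,+}^{\kappa}$ decomposes as its ``flat'' part, isometric to $\bar D$, together with its ``spherical'' part, isometric to one half of the geodesic sphere $\partial B_R^{\kappa}\subset M^m(\kappa)$, so $\mathcal H^{m-1}(\partial B_{R,+}^{\kappa})=\mathcal H^{m-1}(\bar D)+\frac12\mathcal H^{m-1}(\partial B_R^{\kappa})$; and projecting $\partial B_R^{\kappa}$ orthogonally onto the totally geodesic $R$-disk through its center is $1$-Lipschitz, two-to-one, and strictly length-decreasing away from the equator whenever $R<\pik/2$ or $\kappa\le0$, which yields $\frac12\mathcal H^{m-1}(\partial B_R^{\kappa})>\mathcal H^{m-1}(\bar D)$. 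Combining these estimates, $\limsup_i\mathcal H^{m-1}(\partial X_i)\le 2\,\mathcal H^{m-1}(\bar D)<\mathcal H^{m-1}(\partial B_{R,+}^{\kappa})$, contradicting $\mathcal H^{m-1}(\partial X_i)>\mathcal H^{m-1}(\partial B_{R,+}^{\kappa})-\delta_i$. This rules out collapse and completes the proof. The only genuinely new ingredient compared with Theorem~\ref{thm-almost-rigid} is this dimension-$(m-1)$ collapsing scenario, which is also precisely where the assumption $R<\pik/2$ (for $\kappa>0$) enters.
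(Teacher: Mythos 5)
Your proposal follows the paper's proof essentially step for step: contradiction plus Gromov--Hausdorff compactness; in the noncollapsed case, boundary convergence (Theorem~\ref{bry-conv-to-bry}) to get $p_\infty\in\partial X_\infty$, volume stability and Theorem~\ref{thm-vol-bound} to get equality, then the second part of Theorem~\ref{main thm}; in the collapsed case, the covering argument via Theorem~\ref{thm-vol-bound-local} and Lemma~\ref{extr-dim-proper} forcing $E=X_\infty$ and $\dim X_\infty=m-1$, the interval-fiber/two-to-one argument giving $\mathcal H^{m-1}(\partial X_i)\to 2\,\mathcal H^{m-1}(X_\infty)$, and finally absolute volume comparison in dimension $m-1$ against the model inequality $\tfrac12\mathcal H^{m-1}(\partial B^{\kappa}_{R,+})>\mathcal H^{m-1}(B^{\kappa,m-1}_R)$. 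This is exactly the outline the paper gives (referring back to Theorems~\ref{thm-almost-rigid} and \ref{thm-almost-rigid-2} for the collapse analysis), with more of the details written out.

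The one step that fails as written is your justification of the inequality $\tfrac12\mathcal H^{m-1}(\partial B_R^{\kappa})>\mathcal H^{m-1}(B_R^{\kappa,m-1})$ when $\kappa>0$: nearest-point (orthogonal) projection onto a totally geodesic hypersurface of $M^m(\kappa)$ is \emph{not} $1$-Lipschitz in positive curvature. At height $h$ above the hypersurface it expands directions parallel to the hypersurface by the factor $1/\csk(h)$; already in $\SS^2$ the projection of the circle $\partial B_R$ to the diameter has local Lipschitz constant $1/\cos R>1$ at the top point. So the area of the half geodesic sphere cannot be compared to the area of the disk by this map when $\kappa>0$ (for $\kappa\le 0$ your argument is fine). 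The inequality itself is true and is exactly what the paper asserts as clear; it can be checked directly: for $\kappa=1$ the difference $\tfrac12\vol(\SS^{m-1})\sin^{m-1}R-\vol(\SS^{m-2})\int_0^R\sin^{m-2}t\,\dd t$ vanishes at $R=0$ and at $R=\pik/2$, and its derivative equals $\sin^{m-2}R$ times a decreasing function of $R$ that changes sign exactly once on $(0,\pik/2)$, so the difference is strictly positive there (the equality at $R=\pik/2$ is precisely why the hypothesis $R<\pik/2$ is needed). Alternatively, the Crofton/nested-convexity argument behind Theorem~\ref{nested-convex} applied to the degenerate convex body $B_R^{\kappa,m-1}\subset\overline{B_{R,+}^{\kappa}}$, whose boundary area counts as $2\,\mathcal H^{m-1}(B_R^{\kappa,m-1})$, gives the strict inequality for $R<\pik/2$. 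With this one repair your argument is complete and coincides with the paper's.
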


We give an outline of the proof.

Assume that we have a contradicting sequence $(X^m_i, \dd_i)$ of $\curv \ge \kappa$. This means that $\overline{B_R(p_i)}=X_i$ for some $p_i \in \partial X_i$ and $\mathcal{H}^{m-1}(\partial X_i) >\mathcal{H}^{m-1}(\partial B_{R,+}^{\kappa})-\delta_i$, where $\delta_i \to 0$ as $i \to \infty$, but $\dd_{GH}(X_i, \overline{B_{R,+}^{\kappa}}) > \epsilon$ for all $i$.  By compactness, we may take a convergent subsequence, which we also denote $(X^m_i, \dd_i)$. As before, if we can rule out collapse then by the rigidity theorem \ref{main thm} we will have a contradiction. 

Assume that $(X_i, \dd_i, p_i) \to (X_\infty, \dd_\infty, p_\infty)$ and the convergence is collapsing. Then by the same argument as in the proof of the first part of Theorem \ref{thm-almost-rigidity}, we have that $\dim X_\infty = m-1$ and $\partial X_i \to X_\infty$. Moreover, by the same fibration and covering argument we have that $\mathcal{H}^{m-1}(X_\infty)=\frac {1}{2} \mathcal{H}^{m-1}(\partial B^{\kappa}_{R,+})$. Since $\overline{B_R(p_\infty)} = X_\infty$, by absolute volume comparison in dimension $m-1$, we have that $\mathcal{H}^{m-1}(X_\infty) \le \mathcal{H}^{m-1}(B^{\kappa,m-1}_R)$, where $B^{\kappa, m-1}(R)$ is the $R$-ball in $M^{\kappa}(m-1)$. However, clearly $\frac{1}{2} \mathcal{H}^{m-1}(\partial B^{\kappa}_{R,+}) > \mathcal{H}^{m-1}(B^{\kappa,m-1}_R)$ which is a contradiction.
\begin{rem}
The same argument as in the above proof can be used to rule out collapsing in the proof of Theorem \ref{thm-almost-rigid} using that $\frac{1}{2} \mathcal{H}^{m-1}(\partial B^{\kappa}_{R}) > \mathcal{H}^{m-1}(B^{\kappa,m-1}_R)$  when $R<\pik/2$.
\end{rem}

\section{Boundary of Alexandrov spaces: background}
In this section we collect various known results about the boundaries of Alexandrov spaces as well as some folklore results. We also include several new results.

We try to present all the results in the order they need to be proved to avoid logical loops in the arguments. For the folklore results we present proofs.  We will stress topological results as they are less well-known and some are new. Throughout this section all Alexandrov spaces will be assumed to be finite dimensional.

Let $X^n$ be an $n$-dimensional Alexandrov space of $\curv \ge k$.
Recall that the boundary  $\partial X$ of $X$ is defined inductively as follows. For $n=1$ it's known that $X$ is a topological 1-manifold with (possibly empty) boundary and $\partial X$ is defined as the manifold boundary of $X$.
For $n>1$ the boundary is defined inductively: $p \in \partial X$ iff the space of directions $\Sigma_pX$ has boundary. This definition makes sense since $\Sigma_pX$ is an Alexandrov space of $\curv \ge 1$ of dimension $n-1$.
 Points which are not boundary points are called interior points. The set $X \setminus \partial X$ of all interior points is denoted by $\Intr X$.
 \begin{defn}
A metrizable space $X$ is called an MCS-space (space with multiple conic singularities) of dimension $n$ if  every point $x\in X$ has a neighborhood which is pointed homeomorphic to an open cone over a compact  $(n-1)$-dimensional MCS-space. Here we assume the empty set to be the unique $(-1)$-dimensional MCS-space.
\end{defn}

A compact $0$-dimensional MCS-space is a finite collection of points with discrete topology and a $1$-dimensional MCS-space is a graph.

By Perelman's Morse theory~\cite{Per-Morse}, an $n$-dimensional Alexandrov space $X$ is an $n$-dimensional MCS space. This also follows by induction on dimension from Perelman's stability theorem \cite{Per1, Ka07} which implies that a conical neighborhood of $x\in X$ is homeomorphic to $C(\Sigma_xX)$.

An open conical neighborhood of a point in an MCS-space is unique up to pointed homeomorphism ~\cite{Kwun}. As is true for MCS-spaces in general,  $X$ admits a canonical topological stratification.
We say that a point $p\in X$ belongs to the $l$-dimensional strata $X_l$ if $l$ is the maximal number $m$ such that the conical neighbourhood 
of $p$ is pointed homeomorphic to $\R^m\times K(S)$  for some  compact  MCS-space $S$. It is clear that $X_l$ is an $l$-dimensional topological manifold.

We will need the following key result of Perelman. We include its proof due to its importance.
\begin{thm}\label{thm-bry-top} \cite[Theorem 4.6]{Per1}
Let $\Sigma_1^{k_1}$ and $\Sigma_2^{k_2}$ be Alexandrov spaces of $\curv\ge 1$ such that $\R^{l_1}\times C(\Sigma_1)\overset{homeo}{\cong} \R^{l_2}\times C(\Sigma_2)$.
Then  $\Sigma_1$ has boundary iff $\Sigma_2$ does.
\end{thm}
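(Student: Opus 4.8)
The strategy is to detect the boundary by a purely topological invariant. For an $n$-dimensional Alexandrov space $Z$ and a point $q\in Z$, the local homology group $H_n(Z,Z\setminus\{q\};\Z_2)$ is manifestly a topological invariant of the pair $(Z,q)$. I claim that
\[
q\in\partial Z\quad\Longleftrightarrow\quad H_n(Z,Z\setminus\{q\};\Z_2)=0 .
\]
Granting this, Theorem~\ref{thm-bry-top} follows at once. Write $X$ for the common underlying topological space of $\R^{l_1}\times C(\Sigma_1)$ and $\R^{l_2}\times C(\Sigma_2)$, and $n$ for its (topologically determined) dimension. The set $S:=\{q\in X:H_n(X,X\setminus\{q\};\Z_2)=0\}$ depends only on $X$ as a topological space, not on the chosen Alexandrov structure. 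On the other hand, with respect to the $i$-th structure one has $\partial X=\R^{l_i}\times C(\partial\Sigma_i)$, which is empty precisely when $\partial\Sigma_i=\emptyset$; by the claim, $S=\R^{l_i}\times C(\partial\Sigma_i)$ for each $i=1,2$. Hence $\partial\Sigma_1=\emptyset\iff S=\emptyset\iff\partial\Sigma_2=\emptyset$.

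To prove the claim I would use that a conical neighbourhood of $q$ in $Z$ is homeomorphic to the open cone $C(\Sigma_qZ)$ (stated just above the theorem). Since the open cone is contractible and $C(\Sigma_qZ)\setminus\{o\}$ deformation retracts onto $\Sigma_qZ$, excision and the long exact sequence of a pair give $H_n(Z,Z\setminus\{q\};\Z_2)\cong\widetilde H_{n-1}(\Sigma_qZ;\Z_2)$, where $\Sigma_qZ$ is a compact connected Alexandrov space of $\curv\ge1$ and dimension $n-1$ (the case $n=1$ is elementary, so assume $n\ge2$). By definition $q\in\partial Z$ iff $\partial\Sigma_qZ\ne\emptyset$, so the claim reduces to two statements about a compact connected Alexandrov space $Y^k$ of $\curv\ge1$ with $k\ge1$: (i) if $\partial Y=\emptyset$ then $H_k(Y;\Z_2)\ne0$; (ii) if $\partial Y\ne\emptyset$ then $H_k(Y;\Z_2)=0$.

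For (i): a boundaryless Alexandrov space has topologically singular set (the complement of the points admitting a Euclidean neighbourhood) of codimension at least two, since at a point of the codimension-one topological stratum the local model $\R^{k-1}\times C(S)$ with $S$ a finite set is either a half-space — which would put that point into $\partial Y$ — or fails to be an Alexandrov space. Consequently $Y$, with its canonical MCS/CW structure, is a closed connected $\Z_2$-pseudomanifold, and a closed connected $k$-dimensional pseudomanifold has $H_k(\cdot;\Z_2)\cong\Z_2$. For (ii): let $DY$ be the double of $Y$ along $\partial Y$; it is a compact connected Alexandrov space of $\curv\ge1$ and dimension $k$, now without boundary, so $H_k(DY;\Z_2)\cong\Z_2$ by (i). As $\dim\partial Y=k-1$ we have $H_k(\partial Y;\Z_2)=0$, and the Mayer--Vietoris sequence for $DY=Y\cup_{\partial Y}Y$ yields an injection $H_k(Y;\Z_2)\oplus H_k(Y;\Z_2)\hookrightarrow H_k(DY;\Z_2)\cong\Z_2$, forcing $H_k(Y;\Z_2)=0$. (Alternatively, $Y$ is a compact $\Z_2$-pseudomanifold with nonempty boundary, whose top $\Z_2$-homology vanishes directly.)

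The main obstacle is choosing the right invariant together with the two structural inputs behind it. Ordinary homology or the full homology-manifold property will not suffice — for instance the spherical suspension of $\mathbb{CP}^2$ is a boundaryless Alexandrov space that is not a $\Z_2$-homology manifold — so it is essential to work only with the top-degree local homology with $\Z_2$ coefficients, the pseudomanifold property of boundaryless Alexandrov spaces, and the doubling theorem. One must also take some care with the low-dimensional base cases and with the connectedness hypotheses needed to run the pseudomanifold and Mayer--Vietoris arguments.
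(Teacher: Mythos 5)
Your overall strategy---detecting the boundary by the vanishing of top-degree local homology $H_n(Z,Z\setminus\{q\};\Z_2)\cong\widetilde H_{n-1}(\Sigma_qZ;\Z_2)$---is a genuinely different route from the paper, which proves Theorem~\ref{thm-bry-top} by a purely elementary double induction on $k_1$ and $k_2$ using only the stability theorem and uniqueness of conical neighborhoods, and only \emph{afterwards} deduces the local-homology characterization (Theorem~\ref{thm:boundary}, equivalence of \eqref{item-boundary} and \eqref{item-local-homology}). The paper flags this ordering explicitly right after the theorem: boundary points do have different local homology from interior points, ``but in order to prove this the above result had to be proved first.'' Your write-up runs into exactly this issue. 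The key step of your claim (i) is the assertion that a boundaryless Alexandrov space has empty codimension-one topological stratum, justified by: the local model $\R^{k-1}\times C(S)$ with $S$ finite ``is either a half-space --- which would put that point into $\partial Y$ --- or fails to be an Alexandrov space.'' Both halves beg the question. In the half-space case, membership in $\partial Y$ is a \emph{metric} notion (by definition $q\in\partial Y$ iff $\Sigma_qY$ has boundary), while all you know is a homeomorphism $C(\Sigma_qY)\cong\R^{k-1}\times C(\mathrm{pt})\cong\R^{k-2}\times C(I)$; concluding $\partial\Sigma_qY\neq\varnothing$ from this homeomorphism is precisely an instance of Theorem~\ref{thm-bry-top} itself. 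In the case $|S|\ge 3$, the model only needs to be \emph{homeomorphic} to the cone $C(\Sigma_qY)$, not to carry an Alexandrov metric, so ``fails to be an Alexandrov space'' proves nothing; ruling out such branched local models is Corollary~\ref{cor-bry-conical} in the paper, which is derived downstream of Theorems~\ref{thm-bry-top} and \ref{thm:boundary}.

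The approach is not hopeless, but repairing it requires the scaffolding you omitted: a simultaneous induction on dimension proving together (a) the homological dichotomy and (b) emptiness of the codimension-one stratum for boundaryless spaces, where at a putative bad point $q$ one applies the inductive hypothesis to $\Sigma_qY$ (boundaryless since $Y$ is) to get $H_{k-1}(\Sigma_qY;\Z_2)\cong\Z_2$ exactly --- note you need the Grove--Petersen $\cong\Z_2$ statement, not just nonvanishing, to exclude the branched models with $\widetilde H_{k-1}\cong\Z_2^{|S|-1}$, and this in turn needs connectedness of the top stratum. The same mismatch appears in your step (ii): the Mayer--Vietoris argument needs $H_k(DY;\Z_2)\cong\Z_2$, whereas your (i) only yields nonvanishing (here the paper's route via strict concavity of $\dd(\cdot,\partial\Sigma)$ and contractibility of $\Sigma_q$ is both simpler and independent of (i)). As written, the crucial half-space exclusion is assumed rather than proved, and that assumption is the content of the theorem; so the proposal has a genuine gap.
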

\begin{proof}

Note that we necessarily have that $l_1+k_1=l_2+k_2$.
Let's assume $\Sigma_1$ has boundary. We need to show that $\Sigma_2$ does too.

Let us first prove the result for $k_1=1$. If $k_1=1$ then $\Sigma_1=I$ is a closed interval.
We proceed by induction on $k_2$. If $k_2=1$ then $\Sigma_2$ can only be $I$ or $S^1$ and since $\R^l\times C(I)$ is not homeomorphic to  $\R^l\times C(S^1)$ the theorem is clear in this case.

Induction step on $k_2$:
Let $k_2>1$ and suppose we have already proved the statement for smaller values of $\dim \Sigma_2$.

Suppose $F \co \R^{l_1}\times C(I)\to \R^{l_2}\times C(\Sigma_2^{k_2})$ is a homeomorphism and $\Sigma_2$ has no boundary.

Since $k_2>1=k_1$ we must have $l_1>l_2$. Therefore there is a point $x\in \R^{l_1}\times \{o_1\}$ whose image $y=F(x)$ does not lie in $\R^{l_2}\times \{o_2\}$, i.e., $y=(q, t, v_2)$ where $q\in \R^{l_2}, t>0, v_2\in \Sigma_2$. 
Since $\Sigma_2$ has no boundary $\Sigma_{v_2}(\Sigma_2)$ has no boundary either by definition.
Then a conical neighborhood of $y$ is homeomorphic to $\R^{l_2+1}\times C(\Sigma_{v_2}(\Sigma_2))$ by the stability theorem. By uniqueness of conical neighborhoods this implies that  $\R^{l_1}\times C(I)$ is homeomorphic to $\R^{l_2+1}\times C(\Sigma_{v_2}(\Sigma_2))$. This contradicts the induction assumption on $k_2$.

This proves that if $\R^{l_1}\times C(I)\overset{homeo}{\cong} \R^{l_2}\times C(\Sigma_2)$ then $\Sigma_2$ has boundary.

Let us now do induction on $k_1$ in the main statement of the theorem. We have just verified the base of induction $k_1=1$.

Let $k_1>1$ and suppose the theorem is proved for smaller $k_1$. Suppose  $F\co \R^{l_1}\times C(\Sigma_1^{k_1})\to \R^{l_2}\times C(\Sigma_2^{k_2})$ is a homeomorphism where $\Sigma_1$ has boundary but $\Sigma_2$ does not.

Let $v_1\in \partial \Sigma_1$. Let $x=(0, 1, v_1)\in  \R^{l_1}\times C(\Sigma_1)$ where $0\in \R^{l_1}, 1\in \R$. Let $y=F(x)$.   We have two possibilities.

{\bf Case 1} $y=q\times o_2$ lies on $R^{l_2}$ times the cone tip of $C(\Sigma_2)$.
Then a conical neighborhood of $x$ is homeomorphic to a conical neighborhood of $y$ which gives that $\R^{l_1+1}\times C(\Sigma_{v_1}(\Sigma_1))$ is homeomorphic to $\R^{l_2}\times C(\Sigma_2^{k_2})$.  Since $\Sigma_{v_1}(\Sigma_1)$ has boundary this contradicts the induction assumption.

{\bf Case 2} $y=(q,t,v_2)$ where $q\in \R^{l_2}, t>0, v_2\in\Sigma_2$. Then similarly to Case 1 we get that $\R^{l_1+1}\times C(\Sigma_{v_1}(\Sigma_1))$ is homeomorphic to $\R^{l_2+1}\times C(\Sigma_{v_2}(\Sigma_2))$,
which is again a contradiction to the induction assumption since $\Sigma_{v_1}\Sigma_1$ has boundary while  $\Sigma_{v_2}\Sigma_2$ does not.

\end{proof}

Theorem \ref{thm-bry-top} implies that boundary points are distinguished from interior points topologically and that a point in a conical neighborhood of an interior point cannot be a boundary point.  This can be further  sharpened (see Theorem~\ref{thm:boundary} below)  to say that boundary points have different local homology from interior points but in order to prove this the above result had to be proved first.

Theorem \ref{thm-bry-top} immediately  implies the following basic fact.
\begin{cor}\cite[Theorem 4.6]{Per1} \label{thm-bry-distinguished}
Let $X$ be Alexandrov. Then $\partial X$ is a closed subset of $X$.
\end{cor}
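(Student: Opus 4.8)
The plan is to show that the complementary set $\Intr X = X\setminus\partial X$ of interior points is open, which is equivalent to the statement. Fix $p\in\Intr X$, so by definition $\Sigma_pX$ has empty boundary, and by Perelman's stability theorem \cite{Per1, Ka07} the point $p$ has an open neighborhood $U$ pointed homeomorphic to the open cone $C(\Sigma_pX)$. I claim $U\subseteq\Intr X$. The cone vertex corresponds to $p$ itself, which is interior by hypothesis, so it remains to consider a point $q\in U$ corresponding to a pair $(v,t)$ with $v\in\Sigma_pX$ and $t>0$.

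For such a $q$, the radial splitting of the cone produces an open neighborhood of $q$ in $U$, hence in $X$, homeomorphic to $\R\times W$, where $W$ is a conical neighborhood of $v$ in $\Sigma_pX$; by stability applied to the Alexandrov space $\Sigma_pX$ we may take $W\cong C(\Sigma_v\Sigma_pX)$. On the other hand, stability applied to $X$ at $q$ gives a conical neighborhood of $q$ homeomorphic to $C(\Sigma_qX)$. Since both $C(\Sigma_qX)$ and $\R\times C(\Sigma_v\Sigma_pX)$ are homeomorphic to arbitrarily small neighborhoods of their base points (the vertex, respectively the point $(0,\mathrm{tip})$), the coincidence of the local topology of $X$ at $q$ yields a pointed homeomorphism
\[
\R^{0}\times C(\Sigma_qX)\;\cong\;\R^{1}\times C(\Sigma_v\Sigma_pX).
\]
Because $\Sigma_pX$ has no boundary, $v$ is an interior point of $\Sigma_pX$, so $\Sigma_v\Sigma_pX$ has no boundary by definition. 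Theorem~\ref{thm-bry-top} then forces $\Sigma_qX$ to have no boundary, i.e.\ $q\in\Intr X$. Hence $U\subseteq\Intr X$, and $\Intr X$ is open.

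The dimension bookkeeping needed to invoke Theorem~\ref{thm-bry-top} is automatic, since $\dim\Sigma_qX=n-1$ and $\dim\Sigma_v\Sigma_pX=n-2$, so both sides of the displayed homeomorphism have dimension $n$; the base case $n=1$ is simply that a topological $1$-manifold with boundary has closed manifold boundary, so the argument may equally be organized as an induction on $\dim X$. I do not expect a genuine obstacle here: the substance is entirely contained in Theorem~\ref{thm-bry-top}, and this corollary only records that being a boundary point is a purely local topological condition and therefore propagates across a conical neighborhood of an interior point. The sole point requiring a little care is the routine identification of the local models above via the stability theorem and the uniqueness of conical neighborhoods \cite{Kwun}.
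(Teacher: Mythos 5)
Your proof is correct and follows essentially the same route as the paper, which derives the corollary "immediately" from Theorem~\ref{thm-bry-top} via the observation that every point in a conical neighborhood of an interior point $p$ has a local model $\R\times C(\Sigma_v\Sigma_pX)$ with $\Sigma_v\Sigma_pX$ boundaryless; you have simply written out the uniqueness-of-conical-neighborhoods and stability bookkeeping that the paper leaves implicit.
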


Theorem \ref{thm-bry-top}  also immediately implies that homeomorphisms between Alexandrov spaces preserve boundaries. Together with the stability theorem this implies that under noncollapsed convergence the boundaries converge to the  boundary in the limit.

\begin{thm}\label{bry-conv-to-bry}
Let $(X_i^m,p_i)\to (X^m,p)$ be a noncollapsing convergent sequence of Alexandrov spaces with $\curv\ge\kappa$. Let $d_i=\dd_i(p_i,\partial X_i)$ (if $\partial X_i=\varnothing$ we set $d_i=+\infty$). Then $X$ has nonempty boundary iff $d:=\liminf d_i<\infty$. Moreover, if $d<\infty$ then $(X_i^m,\partial X_i, p_i)\to (X^m,\partial X_i,p)$.
\end{thm}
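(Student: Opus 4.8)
The plan is to derive everything from Perelman's stability theorem together with the fact, recorded above, that homeomorphisms between Alexandrov spaces preserve boundaries (a consequence of Theorem~\ref{thm-bry-top}). Since $(X_i^m,p_i)\to(X^m,p)$ is noncollapsing, for each $R>0$ the stability theorem provides, for all large $i$, a homeomorphism $F_i\co \overline{B_R(p_i)}\to\Omega_i$ onto a subset $\Omega_i\subset X$ with $\overline{B_{R-\eps_i}(p)}\subset\Omega_i\subset\overline{B_{R+\eps_i}(p)}$, and $F_i$ may be taken to be an $\eps_i$-Gromov--Hausdorff approximation with $\eps_i\to 0$. For any $x\in B_R(p_i)$ the map $F_i$ identifies a conical neighborhood of $x$ in $X_i$ with a conical neighborhood of $F_i(x)$ in $X$, so by Theorem~\ref{thm-bry-top} we have $x\in\partial X_i$ if and only if $F_i(x)\in\partial X$. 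This local dictionary is the only tool needed.

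First I would prove the equivalence $\partial X\neq\varnothing\iff d:=\liminf_i d_i<\infty$. If $d=\infty$, fix $R$; then $d_i>R$ for all large $i$, so every point of $\overline{B_R(p_i)}$ is an interior point of $X_i$, and applying $F_i$ we find that every point of $B_{R-1}(p)$ is an interior point of $X$. Letting $R\to\infty$ gives $\partial X=\varnothing$. Conversely, if $d<\infty$, pass to a subsequence with $d_i\to d$ and choose $q_i\in\partial X_i$ nearest to $p_i$; then $q_i$ subconverges to some $q\in X$ with $\dd(p,q)=d$. Since $q_i\in B_{d+1}(p_i)$ for large $i$, we have $F_i(q_i)\in\partial X$, while $F_i(q_i)\to q$ because $F_i$ is an $\eps_i$-approximation; as $\partial X$ is closed (Corollary~\ref{thm-bry-distinguished}) this gives $q\in\partial X$, hence $\partial X\neq\varnothing$.

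For the ``moreover'' part, assume $d<\infty$ (so $\partial X\neq\varnothing$) and fix $R$. I would check the two inclusions defining pointed Hausdorff convergence $\partial X_i\to\partial X$ inside the ambient convergence $X_i\to X$, measured against the given GH-approximations on $\overline{B_R(p)}$. If $q_i\in\partial X_i\cap\overline{B_R(p_i)}$ converges to $q\in X$, then $F_i(q_i)\in\partial X$ and $F_i(q_i)\to q$, so $q\in\partial X$ by closedness. In the other direction, given $q\in\partial X\cap\overline{B_{R-1}(p)}$, the point $F_i^{-1}(q)$ lies in $B_R(p_i)\cap\partial X_i$ and converges to $q$, so $q$ is a limit of boundary points. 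Since the stability theorem applies to every subsequence of a noncollapsing convergent sequence, both inclusions hold along the full sequence, which yields $(X_i^m,\partial X_i,p_i)\to(X^m,\partial X,p)$.

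The main obstacle is the care required in the noncompact, pointed formulation of the stability theorem: one obtains homeomorphisms only on compact balls $\overline{B_R(p_i)}$, whose metric spheres $S_R(p_i)$ bear no relation to $\partial X_i$, so one must take $R$ larger than all distances of interest and keep track of the $\eps_i$-fattening of the targets $\Omega_i$; one also relies on being able to choose the stability homeomorphisms compatibly with the GH-approximations, which is precisely what makes the assertions ``$F_i(q_i)\to q$'' and ``$F_i^{-1}(q)\to q$'' legitimate. Beyond these bookkeeping points, the argument is a routine combination of the stability theorem, Theorem~\ref{thm-bry-top}, closedness of the boundary, and compactness.
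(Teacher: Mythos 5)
Your proposal is correct and follows exactly the route the paper intends: the paper proves this theorem in one sentence by combining Perelman's stability theorem with the fact (from Theorem~\ref{thm-bry-top} and uniqueness of conical neighborhoods) that homeomorphisms preserve boundary points, plus closedness of the boundary. Your write-up is simply a careful fleshing-out of that same argument, with the bookkeeping for the pointed stability theorem handled appropriately.
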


Similar to Alexandrov spaces, for any $p\in \partial X$ there are two natural notions of the tangent space to the boundary $T_p\partial X$.
One is the cone over $\partial \Sigma_pX$.  Another is the limit of $(\partial X, p, \lambda d_X)$ as $\lambda\to \infty$.

Applying Theorem \ref{bry-conv-to-bry} to the convergence $(X, \lambda \dd, p) \to (T_pX, , \dd_{T_pX}, o_p)$ as $\lambda\to\infty$ gives that these two notions are the same. More precisely, we have
\begin{cor}\cite{Per1}\label{local-top-bry}
Let $X$ be a  Alexandrov space and let $p\in \partial X$. Then 
\[
(X, \partial X, \lambda \dd, p)\underset{\lambda\to\infty}{\longrightarrow} (T_pX=C(\Sigma_pX), \partial C(\Sigma_pX) =C(\partial \Sigma_pX), \dd_{T_pX}, o_p).
\]
Furthermore a small neighborhood of $p$ in $\partial X$ is homeomorphic  to $C(\Sigma_p\partial X)$.
\end{cor}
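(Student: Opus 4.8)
The plan is to deduce both assertions from the noncollapsed blow-up of $X$ at $p$. Recall that $(X,\lambda\dd,p)\to(T_pX,o_p)$ in the pointed Gromov--Hausdorff sense as $\lambda\to\infty$, that this is the standard description of the tangent cone, and that it is noncollapsing since $\dim T_pX=\dim C(\Sigma_pX)=\dim X$. Since $p\in\partial X$ we have $(\lambda\dd)(p,\partial X)=0$ for every $\lambda$, so Theorem~\ref{bry-conv-to-bry} applies and gives $(X,\partial X,\lambda\dd,p)\to(T_pX,\partial T_pX,\dd_{T_pX},o_p)$; in particular $T_pX$ has nonempty boundary, consistently with $p\in\partial X$. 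Thus the displayed convergence will follow once we identify $\partial T_pX=\partial C(\Sigma_pX)$ with $C(\partial\Sigma_pX)$.

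For this identification, let $\Sigma$ be any Alexandrov space of $\curv\ge 1$ and use the inductive definition of the boundary on $C(\Sigma)$. At the cone tip $o$ one has $\Sigma_oC(\Sigma)=\Sigma$, which has boundary iff $\partial\Sigma\ne\varnothing$. At a point $(t,v)$ with $t>0$ one has $T_{(t,v)}C(\Sigma)\cong\R\times T_v\Sigma$, so $\Sigma_{(t,v)}C(\Sigma)$ is the spherical suspension $S^0*\Sigma_v\Sigma$; since the spherical suspension of a space has boundary iff the space does (with boundary equal to the suspension of the boundary), $(t,v)$ is a boundary point iff $\Sigma_v\Sigma$ has boundary, i.e.\ iff $v\in\partial\Sigma$. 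Hence $\partial C(\Sigma)=\{o\}\cup\big((0,\infty)\times\partial\Sigma\big)=C(\partial\Sigma)$ when $\partial\Sigma\ne\varnothing$, and $\partial C(\Sigma)=\varnothing=C(\varnothing)$ otherwise. Taking $\Sigma=\Sigma_pX$ gives $\partial T_pX=C(\partial\Sigma_pX)$, and $\Sigma_p\partial X$ is by definition the unit sphere $\partial\Sigma_pX$ of this cone; this proves the convergence statement.

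It remains to prove the topological assertion. Apply the relative version of Perelman's stability theorem (cf.\ \cite{Per1, Ka07}) to the convergence of pairs $(X,\partial X,\lambda\dd,p)\to(C(\Sigma_pX),C(\partial\Sigma_pX),o_p)$ just established: for $\lambda$ sufficiently large there is a pointed homeomorphism of pairs from a neighborhood of $p$ in $(X,\partial X)$ onto a neighborhood of $o_p$ in $(C(\Sigma_pX),C(\partial\Sigma_pX))$. Restricting this homeomorphism to the second members of the pairs identifies a neighborhood of $p$ in $\partial X$ with a conical neighborhood of the tip in $C(\partial\Sigma_pX)$, which is homeomorphic to $C(\partial\Sigma_pX)=C(\Sigma_p\partial X)$ by uniqueness of conical neighborhoods. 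Equivalently, one may argue via the double $\hat X$: it is Alexandrov without boundary, $\partial X$ is the fixed-point set of its canonical isometric involution $\iota$, $T_p\hat X$ is the double of $T_pX$, and applying the equivariant stability theorem to $(\hat X,\iota,\lambda\dd,p)\to(T_p\hat X,\hat\iota,o_p)$ and passing to fixed-point sets yields the same conclusion.

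The routine parts are the blow-up convergence and the cone boundary computation. The main obstacle is the topological step: ordinary stability only yields a homeomorphism $X\approx T_pX$ near $p$ and not one carrying $\partial X$ to $\partial T_pX$, so one genuinely needs the relative (or equivariant, via the double) form of Perelman's stability theorem; once that input is available the conclusion is immediate.
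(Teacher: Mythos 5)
Your first two steps are exactly the paper's route and are fine: the noncollapsed blow-up, the application of Theorem~\ref{bry-conv-to-bry} (with the harmless remark that the rescaled spaces have a uniform lower curvature bound $\min(\kappa,0)$), and the inductive computation $\partial C(\Sigma)=C(\partial\Sigma)$, which the paper treats as standard. Where you diverge is the topological statement, and there your diagnosis of the ``main obstacle'' is off. The whole point of proving Theorem~\ref{thm-bry-top} first is that boundary points are distinguished from interior points \emph{topologically}; consequently \emph{any} homeomorphism between (open subsets of) Alexandrov spaces automatically carries boundary to boundary. So the ordinary stability theorem applied to $(X,\lambda\dd,p)\to(T_pX,o_p)$ already produces, for large $\lambda$, a homeomorphism from a ball around $o_p$ in $C(\Sigma_pX)$ onto a neighborhood of $p$ in $X$ which necessarily matches $C(\partial\Sigma_pX)$ with $\partial X$; restricting it and noting that a metric ball around the tip of a cone is homeomorphic to the cone gives the ``furthermore'' part. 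This is the paper's (implicit) argument, and it needs no relative or equivariant stability; your closing claim that ordinary stability is genuinely insufficient is therefore inaccurate in this setting.

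Your heavier route is still correct, but you should be explicit about its inputs. The relative stability theorem of \cite{Ka07} is a statement about \emph{extremal} subsets, so invoking it for the pair $(X,\partial X)$ tacitly uses that $\partial X$ (and $\partial T_pX$) is extremal --- true by \cite{Pet-Per-extremal} (Theorem~\ref{cor-bry-extremal}), and not circular since that fact rests only on Theorem~\ref{thm-bry-top} and Morse theory, but it is established later in the paper's deliberately ordered development and must be named as a hypothesis. The alternative via the double is more problematic at this stage: the doubling theorem (Theorem~\ref{thm-doubling}) and the identification of the tangent cone of the double with the double of $T_pX$ are obtained after, and partly by means of, the local structure results of this section, so within this logical ordering that variant risks circularity and is best dropped.
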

By induction on dimension in the last statement of the above theorem we get
\begin{cor}\label{cor-bry-MCS}
Let $X^m$ be an $m$-dimensional Alexandrov space with nonempty boundary.
Then $\partial X$ is an $(n-1)$-dimensional MCS-space.
\end{cor}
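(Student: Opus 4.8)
The plan is to argue by induction on $m=\dim X$, using the local conical structure of $\partial X$ supplied by Corollary~\ref{local-top-bry} together with the inductive hypothesis applied to spaces of directions. Recall that to certify that $\partial X$ is an $(m-1)$-dimensional MCS-space we must exhibit, for each $p\in\partial X$, a neighborhood of $p$ in $\partial X$ that is pointed homeomorphic to an open cone over a \emph{compact} $(m-2)$-dimensional MCS-space. Corollary~\ref{local-top-bry} already provides such a neighborhood, namely one homeomorphic to $C(\Sigma_p\partial X)$, where $\Sigma_p\partial X=\partial\Sigma_pX$; so the whole statement reduces to recognizing $\partial\Sigma_pX$ as a compact $(m-2)$-dimensional MCS-space.

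For the base case $m=1$: an Alexandrov space $X^1$ is a topological $1$-manifold with (possibly empty) boundary, and $\partial X$ is by definition its manifold boundary, a discrete closed subset of $X$. A discrete space is a $0$-dimensional MCS-space --- each point has a neighborhood that is a cone over the empty $(-1)$-dimensional MCS-space --- so the claim holds.

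For the inductive step, suppose the corollary holds in all dimensions $<m$ and let $X^m$ have $\partial X\ne\varnothing$; fix $p\in\partial X$. Then $\Sigma_pX$ is an $(m-1)$-dimensional Alexandrov space of $\curv\ge 1$, and since $p\in\partial X$ the definition of boundary gives $\partial\Sigma_pX\ne\varnothing$. Furthermore $\Sigma_pX$ is compact, and $\partial\Sigma_pX$ is closed in it by Corollary~\ref{thm-bry-distinguished}, hence compact. By the inductive hypothesis $\partial\Sigma_pX$ is therefore a compact $(m-2)$-dimensional MCS-space. Now Corollary~\ref{local-top-bry} gives that a neighborhood of $p$ in $\partial X$ is pointed homeomorphic to $C(\Sigma_p\partial X)=C(\partial\Sigma_pX)$, an open cone over a compact $(m-2)$-dimensional MCS-space. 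Since $p\in\partial X$ was arbitrary, $\partial X$ is an $(m-1)$-dimensional MCS-space, completing the induction.

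I do not expect a genuine obstacle here: the substantive topological input --- that boundary points are distinguished from interior points, and that a neighborhood of a boundary point is honestly a cone whose link is precisely the boundary of the space of directions --- is already contained in Theorem~\ref{thm-bry-top} and Corollary~\ref{local-top-bry}. What remains is purely bookkeeping: verifying at each stage that the cone link $\partial\Sigma_pX$ is nonempty and compact (so that it is an admissible MCS-space link) and disposing of the one-dimensional base case, both of which are immediate.
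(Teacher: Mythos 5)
Your proof is correct and follows the same route as the paper: the paper proves this corollary precisely by induction on dimension using the last statement of Corollary~\ref{local-top-bry}, which identifies a neighborhood of a boundary point with the cone over $\partial\Sigma_pX$. Your write-up simply makes explicit the bookkeeping (base case, compactness and nonemptiness of the link) that the paper leaves implicit.
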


A key theorem in the theory of Alexandrov spaces is the following result of Perelman whose proof relies only on the stability theorem and the results stated above.
\begin{thm}[Doubling Theorem \cite{Per1}\label{thm-doubling}]
Let $X$ be an Alexandrov space of $\curv\ge \kappa$ with nonempty boundary. Then the doubling of $X$ along the boundary is an Alexandrov space of $\curv\ge \kappa$ without boundary.
\end{thm}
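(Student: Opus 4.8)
The plan is to prove the statement by induction on $n=\dim X$, after reducing it to a purely local comparison question about the double. First I would fix notation: write $\hat X=(X\sqcup X)/{\sim}$, with length metric $\hat d$, where $\sim$ identifies the two copies of $\partial X$ via the identity; let $\iota\colon\hat X\to\hat X$ be the isometric involution interchanging the two copies (with fixed-point set the glued $\partial X$), and let $\pi\colon\hat X\to X$ be the folding map, which is the identity on each copy. A direct computation shows that $\hat d$ restricts to $d_X$ on each copy, while for $a,b$ in different copies $\hat d(a,b)=\min_{z\in\partial X}\bigl(d_X(\pi a,z)+d_X(z,\pi b)\bigr)$; in particular $\pi$ is $1$-Lipschitz, each copy sits isometrically in $\hat X$, and $\hat X$ is a complete locally compact geodesic space. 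By Toponogov's globalization theorem it then suffices to exhibit, around every point of $\hat X$, a neighbourhood in which the $\curv\ge\kappa$ comparison holds. Around an interior point of a copy this is immediate, since $\hat X$ is there locally isometric to $X$; so the whole problem is to verify the comparison near a point $p\in\partial X\subset\hat X$.

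The induction is on $n$, the base case $n=1$ being elementary (the double of a compact interval is a circle, that of a ray is a line). For the inductive step the key preliminary remark is that $\Sigma_p\hat X$ is canonically the metric double of $\Sigma_pX$ along $\partial\Sigma_pX=\Sigma_p\partial X$, the two copies of directions glued along the directions tangent to $\partial X$ and carrying the angle metric. Since $\Sigma_pX$ is an $(n-1)$-dimensional Alexandrov space of $\curv\ge1$ with nonempty boundary, the inductive hypothesis gives that $\Sigma_p\hat X$ is Alexandrov of $\curv\ge1$ and has empty boundary. I would use this in two ways: it makes angles in $\hat X$ between geodesics issuing from $p$ well defined and equal to distances in $\Sigma_p\hat X$ (and similarly at nearby points of $\partial X$ once the curvature bound is established); and, once the curvature bound on $\hat X$ is in hand, it shows $\partial\hat X=\varnothing$, since each $\Sigma_q\hat X$ is either $\Sigma_qX$ with $q$ interior to a copy, or a double of a lower-dimensional Alexandrov space with boundary, so in either case has no boundary (cf.\ Theorem~\ref{thm-bry-top}).

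The heart of the argument is the local comparison near $p$. I would verify the $(1+3)$-point comparison for points $a,b,c$ in a small ball around $p$. If all three lie in a single closed copy, the needed shortest paths may be taken in that copy and the inequality is precisely the comparison in $X$. Otherwise some shortest path of $\hat X$ between two of the points meets $\partial X$; such a path folds under $\pi$ to a path in $X$ which is shortest on each side of a crossing point $z\in\partial X$, and a first-variation argument at $z$ — using that $\pi$ is $1$-Lipschitz and that $\hat X$ is $\iota$-symmetric — shows it obeys the billiard reflection law along $\partial X$. One then rewrites each comparison angle at $a,b,c$ in terms of this reflected configuration in $X$ and deduces the required inequality from the comparison in $X$, together with the reflection law and the structure of the spaces of directions at the boundary points involved. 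Degenerate configurations — a shortest path running along $\partial X$ on a segment of positive length, or one of $a,b,c$ lying on $\partial X$ — I would handle by perturbing the offending endpoints into the interior of a copy and passing to the limit, the comparison being a closed condition.

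I expect the main obstacle to be exactly this last step: controlling shortest paths of $\hat X$ in a neighbourhood of $\partial X$ — that they meet $\partial X$ at isolated points obeying the reflection law (or run along it), that $\hat d$ depends on the crossing point through a genuine first-variation formula, and that the comparison can be transported honestly from $X$ to $\hat X$. This is where both the inductive hypothesis (to have angles and first variation available at boundary points) and the comparison in $X$ itself are used in an essential way. I would note that the cleanest modern treatment of this step is Petrunin's gluing theorem, of which the doubling theorem is the special case of gluing $X$ to a second copy of itself, proved via gradient-flow techniques.
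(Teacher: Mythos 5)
The paper does not prove Theorem~\ref{thm-doubling} at all: it is quoted from Perelman \cite{Per1}, so the only question is whether your sketch amounts to an independent proof. It does not. The reductions you carry out correctly (the formula for $\hat d$, the isometric embedding of each copy, the reduction via globalization to a local comparison near a boundary point, and the inductive set-up on dimension) are the easy part; the entire content of the theorem is the step you describe in the conditional mood and then concede: verifying the $(1+3)$-point comparison for configurations whose shortest paths in $\hat X$ meet the gluing locus. As written, this step is a plan, not an argument. The ``billiard reflection'' picture is not available for free: shortest paths in $\hat X$ need not meet $\partial X$ transversally at isolated points (they can run along it, or hit it in complicated sets, and $\partial X$ is not known to be Alexandrov or even to have controlled geodesic behaviour), the first-variation formula you invoke at a crossing point $z\in\partial X$ presupposes well-defined angles and angle comparison \emph{in $\hat X$} at boundary points -- i.e.\ a piece of the curvature bound you are trying to prove -- and even the identification of $\Sigma_p\hat X$ with the metric double of $\Sigma_pX$ (which your induction leans on) requires knowing how distances in $\hat X$ behave to second order near $p$, which is again intertwined with the comparison. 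This circularity is precisely why the doubling theorem is hard, and why Perelman's proof and Petrunin's gluing theorem use different technology (developments, quasigeodesics, gradient curves) rather than a naive reflection argument. Your closing remark that ``the cleanest treatment of this step is Petrunin's gluing theorem, of which the doubling theorem is a special case'' defers the missing core to a result that contains the statement being proved, which is circular in this context.

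So the verdict is a genuine gap: everything up to and including the reduction to the boundary-local comparison is fine and standard, but the boundary-local comparison itself -- the theorem's actual substance -- is neither proved nor reduced to cited results other than (a generalization of) the theorem itself. If you want a complete argument you must either reproduce Perelman's inductive comparison argument from \cite{Per1} or invoke the gluing theorem as an external citation rather than as a step inside your own proof.
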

By Morse theory, Theorem~\ref{thm-bry-top} immediately implies the following theorem. 
\begin{thm}\cite{Pet-Per-extremal}\label{cor-bry-extremal}
For any Alexandrov space $X$ its boundary $\partial X$ is extremal in $X$.
\end{thm}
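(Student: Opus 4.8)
Recall that a closed subset $E\subseteq X$ is \emph{extremal} if for every $q\in X\setminus E$ and every $p\in E$ at which the restriction $\dist(q,\cdot)|_E$ attains a local minimum, the point $p$ is a critical point of $\dist(q,\cdot)$ on $X$ (equivalently, $E$ is invariant under the gradient flows of all semiconcave functions). I would prove the statement with $E=\partial X$ by contradiction: suppose $\partial X$ is not extremal, so there are $q\notin\partial X$ and $p\in\partial X$ with $\dist(q,p)>0$ such that $\dist(q,\cdot)$ restricted to $\partial X$ has a local minimum at $p$ on some neighbourhood $W\ni p$, yet $p$ is a \emph{regular} (noncritical) point of $\dist(q,\cdot)$ on $X$. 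The base case $\dim X=1$ is immediate, since a boundary point of a $1$-manifold has a single direction, which points toward every interior $q$ and hence is automatically critical; so assume $\dim X\ge 2$.

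The first step is Perelman's Morse theory. Since $p$ is a regular point of $\dist(q,\cdot)$, a sufficiently small neighbourhood $U\ni p$, which we may take inside $W$, admits a homeomorphism onto a product $L\times(-1,1)$ under which $\dist(q,\cdot)$ becomes a strictly monotone function of the second coordinate and $p$ corresponds to $(p_0,0)$. Choosing the sign appropriately, the point $p'\in U$ corresponding to $(p_0,-\tfrac12)$ satisfies $\dist(q,p')<\dist(q,p)$. Since $\dist(q,\cdot)|_{\partial X}\ge\dist(q,p)$ on $W$ and $p'\in U\subseteq W$, this forces $p'\notin\partial X$; thus $p'$ is an interior point of $X$, so $\Sigma_{p'}X$ has empty boundary. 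Moreover the obvious translation self-homeomorphism of $L\times(-1,1)$ carrying a neighbourhood of $(p_0,0)$ onto a neighbourhood of $(p_0,-\tfrac12)$ pulls back to a homeomorphism between a neighbourhood of $p$ and a neighbourhood of $p'$ in $X$.

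By the stability theorem these neighbourhoods are pointed-homeomorphic to the open cones $C(\Sigma_pX)$ and $C(\Sigma_{p'}X)$, and by uniqueness of conical neighbourhoods we obtain a pointed homeomorphism $C(\Sigma_pX)\cong C(\Sigma_{p'}X)$. Theorem~\ref{thm-bry-top}, applied with $l_1=l_2=0$, then forces $\Sigma_pX$ to have nonempty boundary if and only if $\Sigma_{p'}X$ does; but $p\in\partial X$ gives $\partial\Sigma_pX\ne\varnothing$ while $p'\notin\partial X$ gives $\partial\Sigma_{p'}X=\varnothing$, a contradiction, so $\partial X$ is extremal. The crux of the argument — and the only place where Theorem~\ref{thm-bry-top} is indispensable — is this last step: a regular point of $\dist(q,\cdot)$ shares its conical neighbourhood (up to homeomorphism) with nearby points of strictly smaller distance to $q$, and Theorem~\ref{thm-bry-top} is precisely what forbids such a local homeomorphism from turning a boundary point into an interior point. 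The remaining ingredients (the local product structure near a regular point from Perelman's Morse theory, the stability theorem, and uniqueness of conical neighbourhoods) are all standard and already recalled above.
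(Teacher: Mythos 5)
Your proposal is correct and takes essentially the same route as the paper's proof: assume the relevant boundary point is regular for the distance function, use Perelman's Morse theory to get a local product structure producing a nearby point with strictly smaller distance (hence not on $\partial X$) whose conical neighbourhood is homeomorphic to that of the boundary point, and contradict Theorem~\ref{thm-bry-top} via uniqueness of conical neighbourhoods. The only cosmetic difference is that you verify the local-minimum definition of extremality directly while the paper reduces, citing \cite{petrun-semiconcave}, to checking criticality at a nearest boundary point; one small caveat is that your parenthetical equivalence should refer to \emph{double} semiconcave functions (as in the paper's remark, the gradient flow of $\dd(\cdot,\partial X)$ does not preserve $\partial X$), but this aside plays no role in your argument.
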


\begin{proof}
Let $p\notin \partial X$. Let $q\in \partial X$ be  such that $\dd(p,q)=\dd(p,\partial X)$. Let $f=d(\cdot, p)$.  By \cite[Theorem 4.1.2]{petrun-semiconcave}, we need only to show that $\nabla_q f=0$, i.e., $q$ is a critical point of $f$. Suppose not. Then by Morse theory $f$ is a local bundle map to $\R$ near $q$ and hence there exists $q'$ near $q$ with $f(q')<f(q)$ such that  a conical neighborhood of $q$ is homeomorphic to a conical neighborhood of $q'$. But this is impossible by  Theorem~\ref{thm-bry-top}  since $q'\notin \partial X$ and hence $\Sigma_{q'}X$ has no boundary.
\end{proof}

\begin{rem}
Theorem ~\ref{cor-bry-extremal} gives a different proof of Corollary \ref{cor-bry-MCS} using general results on topological structure of extremal subsets \cite{Pet-Per-extremal}.
\end{rem}

There is a third notion of the tangent space $T_p\partial X$ for $p\in \partial X$ where in the convergence of rescaled metrics on $\partial X$ one looks at intrinsic rather than extrinsic metrics.  This notion is generally different as should be expected. Nevertheless, one has the following.

\begin{thm}\label{intr-bdry-conv}
Let $X^m$ be Alexandrov and $p\in\partial X$. Then $(\partial X, \lambda \dd^{\partial}_X, p)\to (T_p\partial X, \dd^\partial_{T_p X}, o_p)$ as $\lambda\to\infty$, where $\dd^\partial$ stand for intrinsic metrics on the boundary on relevant spaces.
\end{thm}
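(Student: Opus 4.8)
The plan is to reduce the intrinsic convergence of the boundaries to the already-established extrinsic convergence of the boundaries together with the infinitesimal comparison between intrinsic and extrinsic distances. First I would recall from Corollary~\ref{local-top-bry} (applied to the rescaling $(X,\lambda\dd,p)\to (T_pX,\dd_{T_pX},o_p)$) that $(\partial X,\lambda\dd_X,p)\to(\partial T_pX = C(\partial\Sigma_pX),\dd_{T_pX},o_p)$ in the pointed Gromov--Hausdorff sense, i.e.\ with respect to the \emph{extrinsic} (ambient) metrics. The target $T_p\partial X$ in the statement carries the \emph{intrinsic} metric $\dd^\partial_{T_pX}$ of $C(\partial\Sigma_pX)$, so the content of the theorem is that replacing the ambient metric by the length metric on $\partial X$ (and rescaling) still produces this same cone, now with its own length metric.

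The key step is a quantitative version of Lemma~\ref{lem-extr-intr-extr}: since $\partial X$ is extremal in $X$ (Theorem~\ref{cor-bry-extremal}), for every $q\in\partial X$ one has $\dd^\partial_X(q,q')/\dd_X(q,q')\to 1$ as $q'\to q$, and in fact the backward-gradient-curve argument of Lemma~\ref{lem-extr-intr-extr} (via \cite[Lemma 6.1]{Fuj-uniform-bounds-extr}) gives that on a sufficiently small metric ball $B_r(p)\cap\partial X$ the two metrics $\dd_X$ and $\dd^\partial_X$ are $(1+o(1))$-bi-Lipschitz as $r\to 0$, with the $o(1)$ depending only on how close $|\nabla f|$ is to $1$ near $p$ for $f=\dd_X(\cdot,p)$. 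Concretely, I would fix $\delta>0$, choose $r_0$ so that $|\nabla_x f|>1-\delta$ for $x\in B_{r_0}(p)\setminus\{p\}$, and observe that for $q,q'\in B_{r_0/10}(p)\cap\partial X$ any shortest path in $\partial X$ joining them realizing $\dd^\partial_X$ stays in $B_{r_0}(p)$, so running the backward gradient flow of $\dd_X(\cdot,q)$ from $q'$ inside the extremal set $\partial X$ produces a path in $\partial X$ of length $\le \dd_X(q,q')/(1-\delta)^2$; hence $\dd_X(q,q')\le \dd^\partial_X(q,q')\le (1-\delta)^{-2}\dd_X(q,q')$ on that small ball.

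With this bi-Lipschitz comparison in hand, I would finish as follows. Rescale: on $B_{1}^{\lambda\dd_X}(p)\cap\partial X$ the metrics $\lambda\dd_X$ and $\lambda\dd^\partial_X$ are $(1+o_\lambda(1))$-bi-Lipschitz for $\lambda\to\infty$ (this is exactly the previous paragraph with $r\sim 1/\lambda$), and the \emph{same} comparison holds on the limit cone $C(\partial\Sigma_pX)$ between its ambient metric $\dd_{T_pX}$ and its intrinsic metric $\dd^\partial_{T_pX}$ — indeed on a cone the identity $\dd^\partial = \dd$ holds exactly near the apex in the radial directions and the comparison is automatic. Passing to the GH limit of $(\partial X,\lambda\dd_X,p)\to(C(\partial\Sigma_pX),\dd_{T_pX},o_p)$ and using that uniform bi-Lipschitz perturbations of the metrics pass to the limit, one gets $(\partial X,\lambda\dd^\partial_X,p)\to (C(\partial\Sigma_pX),\dd^\partial_{T_pX},o_p)$, which is the claim since $T_p\partial X$ is by definition $C(\partial\Sigma_pX)$ with the intrinsic metric. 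The one point requiring care — and the main obstacle — is making rigorous that a family of metrics that GH-converges, when replaced by $(1+o(1))$-bi-Lipschitz-equivalent metrics, still GH-converges to the correspondingly perturbed limit; this is a soft but slightly fiddly $\eps/3$-type argument using that a GH-approximation for $(\partial X,\lambda\dd_X)$ is, up to a factor $(1+\delta)$, also one for $(\partial X,\lambda\dd^\partial_X)$, and then letting $\delta\to 0$ along a diagonal subsequence. Uniformity of the $o(1)$ in $q$ over a fixed compact ball (needed so that the comparison survives the limit base-pointed at $o_p$ rather than only pointwise) follows from a standard compactness/continuity argument for $|\nabla\dd(\cdot,p)|$ near $p$.
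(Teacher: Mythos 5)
Your key quantitative step is false, and the strategy cannot be repaired, because at a general boundary point the two candidate blow-up limits are not even isometric. Lemma~\ref{lem-extr-intr-extr} only says that $\dd^\partial_X(p,q)/\dd_X(p,q)\to 1$ as $q\to p$ for the \emph{fixed} base point $p$; it does not give a two-point estimate $\dd^\partial_X(q,q')\le (1+o(1))\,\dd_X(q,q')$ for all pairs $q,q'$ in a small ball around $p$, and such an estimate is simply wrong at non-regular boundary points. Take $X=[0,1]^2\subset\R^2$ and $p$ a corner: for $q=(r,0)$, $q'=(0,r)$ one has $\dd_X(q,q')=\sqrt{2}\,r$ but $\dd^\partial_X(q,q')=2r$, so the bi-Lipschitz constant between $\dd_X$ and $\dd^\partial_X$ on $B_r(p)\cap\partial X$ stays $\ge\sqrt{2}$ for every $r$. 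Your gradient-flow justification breaks precisely here: you need $|\nabla\,\dd(\cdot,q)|$ close to $1$ along the backward flow, but closeness of $|\nabla_x\dd(\cdot,p)|$ to $1$ for $x$ near $p$ says nothing about $|\nabla_{q'}\dd(\cdot,q)|$ when $q,q'$ are both near $p$ at comparable mutual distance (in the square example $|\nabla_{q'}\dd(\cdot,q)|=\tfrac{\sqrt{2}}{2}$), so the uniformity in $q$ you invoke at the end is not available. Likewise the remark that on the cone ``the comparison is automatic'' is incorrect: for $T_pX$ a quarter-plane, $(\partial T_pX,\dd_{T_pX})$ is the cone over two points at distance $\pi/2$, while $(T_p\partial X,\dd^\partial_{T_pX})\cong\R$; these are different metric spaces. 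Since a $(1+o(1))$-bi-Lipschitz perturbation argument would force the extrinsic and intrinsic rescaled limits of $\partial X$ to coincide, your reduction can only work at points with $T_pX\cong\R^m_+$, not at an arbitrary $p\in\partial X$ as the theorem requires.

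What the argument actually needs is a statement about convergence of \emph{length} metrics that does not pass through a pointwise bi-Lipschitz comparison. This is how the paper proceeds: Corollary~\ref{local-top-bry} gives the noncollapsed convergence $(X,\partial X,\lambda\dd,p)\to(T_pX,\partial T_pX,o_p)$, and since $\partial X$ is extremal (Theorem~\ref{cor-bry-extremal}) one applies Petrunin's theorem that for a noncollapsing convergence $(X_i^n,F_i,p_i)\to(X^n,F,p)$ with $F_i$ extremal, the intrinsic metrics on $F_i$ Gromov--Hausdorff converge to the intrinsic metric on $F$; applied to the rescaled spaces this yields $(\partial X,\lambda\dd^\partial_X,p)\to(T_p\partial X,\dd^\partial_{T_pX},o_p)$ directly. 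The local $(1+\delta)$-bi-Lipschitz comparison you describe is valid, and useful, only at boundary points $q$ with $T_qX\cong\R^m_+$ (as in the measure statement of Lemma~\ref{lem-extr=intr}), but the given $p$ need not be such a point.
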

\begin{proof}
This follows from Corollary~\ref{local-top-bry} along with the fact that that $\partial X$ is extremal and a result of Petrunin stating that given noncollapsing convergence $(X^n,F_i,p_i)\to (X^n,F,p)$ where $F_i$ are extremal in $X$, then the intrinsic metrics on $F_i$ also Gromov-Hausdorff converge to the intrinsic metric on $F$.
\end{proof}

Following Petrunin and Perelman \cite{Petr-Per-grad-curves} (see also \cite{petrun-semiconcave}) we can now adopt the following definition.

\begin{defn}\label{defn-semiconcave}
Let $X$ be an Alexandrov space. A locally Lipschitz function $f\co X\to \R$ is called \emph{semiconcave} if for any any point in $X$ has a neighborhood $U$ such that  its restriction of $f$ to any geodesic in $U$ is $\lambda$-concave for some $\lambda=\lambda(U)$. 

$f$ is called \emph{double semiconcave} if it is semiconcave in the case where $\partial X = \varnothing$. In the case where $\partial X \neq \varnothing$ then we also require that the canonical extension of $f$ to the double of $X$ is semiconcave in the above sense. 

\end{defn}

\begin{rem}
Let us comment on the last requirement in the above definition. Much of the theory of semiconcave functions including most of the theory of their gradient flows can be developed without that assumption (see \cite{AKPbook}).
However some results (such as Theorem~\ref{extr-flow-invariance} below)  require existence of supporting vectors (see \cite[Definition 1.3.6]{petrun-semiconcave}) to $f$ at all points.  Supporting vectors always exist at interior points of $X$  but may not exist at boundary points without the doubling assumption \cite{Petr-Per-grad-curves}. Let us note however that existence of supporting vectors for distance functions, i.e., to functions of the form $\dd(\cdot, A)$, outside $A$ can be proved directly irrespective of the existence of the boundary (see \cite[Proposition 2.2]{Per-Morse}). Therefore Theorem~\ref{extr-flow-invariance} for such functions can  be proved directly without any reference to the notion of a boundary.
\end{rem}
The following theorem of Petrunin and Perelman is very useful.
\begin{thm}\label{extr-flow-invariance} 
Let $X$ be an Alexandrov space and let $f\co X\to \R$ be double semiconcave.
Then the gradient flow of $f$ leaves any extremal subset of $X$ invariant.
\end{thm}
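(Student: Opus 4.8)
\emph{Proof proposal.} The plan is to first prove the local statement — if $\alpha\co[0,\varepsilon)\to X$ is an $f$-gradient curve with $\alpha(0)=q_0\in E$, then $\alpha(t)\in E$ for all small $t$ — working on a neighborhood of $q_0$ on which $f$ is $\lambda$-concave, and then to pass to an arbitrary gradient curve by covering it with finitely many such pieces (the set of times at which the curve has stayed in $E$ is closed and nonempty, and the local statement forbids it from having a finite maximum). For the local statement set $h(t):=\dd(\alpha(t),E)$; this is locally Lipschitz, nonnegative, and $h(0)=0$, so it suffices to show $h\equiv 0$ near $0$. The heart of the matter is the differential inequality
\begin{equation*}
\tfrac{d^{+}}{dt}h(t)\ \le\ \lambda\,h(t)\qquad\text{at every }t\text{ with }\alpha(t)\notin E.
\end{equation*}
Granting this, the conclusion follows by a Gronwall/interval argument: on any connected component $(a,b)$ of $\{h>0\}$ (necessarily with $h(a)=0$) the function $e^{-\lambda t}h(t)$ has nonpositive upper right derivative, hence is nonincreasing, yet tends to $0$ as $t\to a^{+}$; so $h\le 0$ on $(a,b)$, a contradiction. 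Thus $\{h>0\}$ is empty near $0$.

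To prove the inequality, fix $t$ with $x:=\alpha(t)\notin E$, choose a closest point $x^{*}\in E$, and put $\ell:=\dd(x,x^{*})=h(t)$. Since $f$ is double semiconcave it admits a supporting vector $s\in T_{x^{*}}X$ at $x^{*}$, i.e. $d_{x^{*}}f(\xi)\le-\langle s,\xi\rangle$ for all $\xi\in T_{x^{*}}X$; this is precisely the place where the doubling hypothesis is used, since supporting vectors can fail to exist at boundary points of $X$ otherwise (see \cite[Definition 1.3.6]{petrun-semiconcave} and \cite{Petr-Per-grad-curves}). Because $E$ is extremal and $x^{*}$ is a nearest point of $E$ to $x\notin E$, the point $x^{*}$ is a critical point of $\dd(x,\cdot)$; hence there is a minimizing geodesic $[x\,x^{*}]$ whose direction $v$ at $x^{*}$ satisfies $\angle(v,s)\le\pi/2$, so that $d_{x^{*}}f(v)\le-\langle s,v\rangle\le 0$. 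Writing $\uparrow_{x}^{x^{*}}$ for the direction of that geodesic at $x$, the $\lambda$-concavity of $f$ along it gives $d_{x}f(\uparrow_{x}^{x^{*}})\ \ge\ -d_{x^{*}}f(v)-\lambda\ell\ \ge\ -\lambda\ell$. Finally, using that $h(\alpha(t+\cdot))\le\dd(\alpha(t+\cdot),x^{*})$ with equality at $0$, the first variation formula along the gradient curve, and the defining property $\langle\nabla_{x}f,\xi\rangle\ge d_{x}f(\xi)$ of the gradient,
\begin{equation*}
\tfrac{d^{+}}{dt}h(t)\ \le\ -\langle\nabla_{x}f,\uparrow_{x}^{x^{*}}\rangle\ \le\ -d_{x}f(\uparrow_{x}^{x^{*}})\ \le\ \lambda\ell\ =\ \lambda\,h(t),
\end{equation*}
as required.

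I expect the main obstacle to be the bookkeeping caused by non-uniqueness: the closest point $x^{*}$ and the minimizing geodesic $[x\,x^{*}]$ must be chosen compatibly with the supporting vector $s$ so that the angle bound $\angle(v,s)\le\pi/2$ really holds (one selects $s$ first, then extracts a minimizing direction from criticality of $\dd(x,\cdot)$ at $x^{*}$), and one must invoke, as a black box, that for double semiconcave $f$ the gradient flow $\Phi^{f}_{t}$ is well defined and supporting vectors exist at all points — this is exactly the Perelman–Petrunin gradient-flow machinery. (An alternative, slightly longer, route avoids the Gronwall estimate: one shows that for every $q\in E$ the gradient $\nabla_{q}f$ lies in the tangent cone $T_{q}E$ — which follows from the supporting-vector inequality together with extremality of $\Sigma_{q}E\subset\Sigma_{q}X$, by induction on $\dim X$ — and then compares the $f$-gradient curve with the gradient curve of $f|_{E}$ for the intrinsic metric on $E$, using Lemma \ref{lem-extr-intr-extr} and uniqueness of gradient curves.)
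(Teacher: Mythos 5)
Your proposal is essentially correct, but note that the paper does not prove this theorem at all: it is quoted as a result of Perelman and Petrunin (the paper only records, in the surrounding remark, that the doubling hypothesis is what guarantees supporting vectors at boundary points, which is exactly the role you assign to it). So there is no in-paper argument to compare against; what you have written is, in substance, the standard Perelman--Petrunin proof. The chain of estimates is sound: extremality gives criticality of $\dd(x,\cdot)$ at the nearest point $x^{*}$, so for the (nonzero) supporting vector $s$ there is a minimizing direction $v$ with $\langle s,v\rangle\ge 0$, hence $d_{x^{*}}f(v)\le 0$; $\lambda$-concavity along that same geodesic transports this to $d_{x}f(\uparrow_{x}^{x^{*}})\ge-\lambda\ell$; and the first-variation inequality for curves with right velocities together with $\langle\nabla_{x}f,\xi\rangle\ge d_{x}f(\xi)$ gives $\tfrac{d^{+}}{dt}h\le\lambda h$ off $E$, after which the Dini--Gronwall and continuation arguments are routine (extremal subsets are closed). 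The only housekeeping items you should make explicit are: the trivial case $s=0$ (then $d_{x^{*}}f\le 0$ in every direction and any minimizing geodesic works); that the geodesic $[x\,x^{*}]$ stays in the neighborhood on which the concavity constant $\lambda$ is valid (clear since $\dd(x^{*},q_0)\le 2\,\dd(x,q_0)$); and that the first-variation inequality and existence of right velocities of gradient curves are being imported from the Petrunin--Perelman gradient-flow machinery, as you acknowledge. With those points spelled out, the proof is complete; your parenthetical alternative via $\nabla_{q}f\in T_{q}E$ is also a viable (and likewise standard) route.
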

Since boundary is extremal by Corollary~\ref{cor-bry-extremal} this immediately yields

\begin{cor}\label{cor-bry-flow-inv}
Let $X$ be an Alexandrov space and let $f\co X\to \R$ be double semiconcave.
Then the gradient flow of $f$ leaves $\partial X$ invariant.
\end{cor}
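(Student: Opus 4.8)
The plan is to deduce the statement directly from the two results established just above, with essentially no additional work. First I would invoke Corollary~\ref{cor-bry-extremal}, which says that the boundary $\partial X$ is an extremal subset of $X$. Then I would apply Theorem~\ref{extr-flow-invariance} to the double semiconcave function $f$: that theorem guarantees that the gradient flow of $f$ preserves every extremal subset of $X$, and in particular it preserves $\partial X$. That is the entire argument.

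The one point that deserves attention — and the only place where the hypotheses are genuinely needed — is the word \emph{double} in ``double semiconcave.'' Theorem~\ref{extr-flow-invariance} requires $f$ to admit supporting vectors at every point of $X$, and at boundary points this may fail for a function that is merely semiconcave; the doubling condition in Definition~\ref{defn-semiconcave} is precisely what rules this out, since supporting vectors of $f$ at a boundary point $p$ are obtained from supporting vectors at $p$ of the canonical extension of $f$ to the double of $X$, which is an Alexandrov space without boundary (Theorem~\ref{thm-doubling}) for which all points are interior. Beyond verifying that this hypothesis is in force, there is no obstacle, so the proof is immediate.
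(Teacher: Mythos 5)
Your argument is exactly the paper's: $\partial X$ is extremal by Corollary~\ref{cor-bry-extremal}, and Theorem~\ref{extr-flow-invariance} applied to the double semiconcave $f$ immediately gives the invariance of $\partial X$ under the gradient flow. Your aside on why the ``double'' hypothesis is needed (supporting vectors at boundary points) matches the paper's own remark, so the proposal is correct and follows the same route.
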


Let us remark here that Theorem \ref{extr-flow-invariance}  and Corollary \ref{cor-bry-flow-inv} are false for semiconcave functions which are not double semiconcave. 
For example, $f=\dd(\cdot, \partial X)$ is known to be semiconcave along geodesics in $X$ \cite{Per1, Alexander-Bishop}, but its extension to the double of $X$ is not semiconcave, supporting vectors to $f$ do not exist at boundary points and the gradient flow of $f$ clearly does not leave $\partial X$ invariant.

The following result about the topology of the boundary is folklore but to the best of our knowledge the proof has not been carefully written. Therefore we record it here.
\begin{prop}\label{prop-bry-top}
Let $X^n$ be an Alexandrov space. Then
\begin{enumerate}
\item\label{item-mcs} $\partial X$ is an $(n-1)$-dimensional MCS space;

\item\label{item-bry-no-bry} $(\partial X)_{n-2}=\varnothing$;
 \item \label{homology-space-dir-bry}  For any $p\in \partial X$ it holds that $H_{n-2}(\Sigma_p\partial X,\Z_2)\ne 0$;
 \item\label{homology-bry}  Let $B$ be a compact connected component of  $\partial X$. Then $H_{n-1}(B,\Z_2)\ne 0$.
 
  \end{enumerate}
\end{prop}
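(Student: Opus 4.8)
The plan is to prove the four items simultaneously by induction on $n=\dim X$, the cases $n\le 2$ being immediate from the classification of one-dimensional Alexandrov spaces as intervals and circles. Item \eqref{item-mcs} is precisely Corollary~\ref{cor-bry-MCS}: by Corollary~\ref{local-top-bry} a neighborhood of $p$ in $\partial X$ is homeomorphic to the open cone $C(\Sigma_p\partial X)=C(\partial\Sigma_pX)$, and $\partial\Sigma_pX$ is an $(n-2)$-dimensional MCS space by the inductive hypothesis applied to the $(n-1)$-dimensional Alexandrov space $\Sigma_pX$ (note $\dim\partial X=n-1$ wherever $\partial X\ne\varnothing$, by the same inductive reasoning).

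For item \eqref{item-bry-no-bry}, suppose toward a contradiction that $p\in(\partial X)_{n-2}$. A conical neighborhood of $p$ in $\partial X$ is then homeomorphic to $\R^{n-2}\times C(S)$ for a finite set $S$, and one checks $|S|\notin\{0,2\}$ (if $|S|=2$ the point is a manifold point and lies in the top stratum, while $|S|=0$ is incompatible with $\dim\partial\Sigma_pX=n-2$). Using the homeomorphism $C(A)\times C(B)\cong C(A*B)$, the set $\R^{n-2}\times C(S)$ is a conical neighborhood of the cone point of $C(S^{n-3}*S)$, so by uniqueness of conical neighborhoods $\Sigma_p\partial X=\partial\Sigma_pX\cong S^{n-3}*S$. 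The same computation shows that every point of the $S^{n-3}$-factor of $S^{n-3}*S$ has a conical neighborhood homeomorphic to $\R^{n-3}\times C(S)$, so $(\partial\Sigma_pX)_{n-3}\supseteq S^{n-3}\ne\varnothing$; this contradicts item \eqref{item-bry-no-bry} for the $(n-1)$-dimensional space $\Sigma_pX$. Hence $(\partial X)_{n-2}=\varnothing$. In particular $\partial X$ has no point with a half-space neighborhood $\R^{n-2}\times[0,\infty)$, since such a point would lie in $(\partial X)_{n-2}$; that is, $\partial X$ has empty MCS-boundary.

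Item \eqref{homology-space-dir-bry} then follows from item \eqref{homology-bry} one dimension down: $\Sigma_p\partial X\cong\partial\Sigma_pX$ is a nonempty compact $(n-2)$-dimensional MCS space, so it has a compact connected component $B'$, and $H_{n-2}(B';\Z_2)\ne 0$ by item \eqref{homology-bry} applied to $\Sigma_pX$; hence $H_{n-2}(\Sigma_p\partial X;\Z_2)\ne 0$. For item \eqref{homology-bry} itself, let $B$ be a compact connected component of $\partial X$; by items \eqref{item-mcs} and \eqref{item-bry-no-bry} it is a compact connected $(n-1)$-dimensional MCS space whose manifold stratum is dense, whose $(n-2)$-stratum is empty, and which has empty boundary. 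Fixing a triangulation of $B$ compatible with the stratification, every open $(n-2)$-simplex lies in a single stratum of dimension $\ge n-2$; since the $(n-2)$-stratum is empty it lies in the $(n-1)$-dimensional manifold stratum, which has no boundary, so every $(n-2)$-simplex is a face of exactly two $(n-1)$-simplices. Hence the $\Z_2$-chain $z=\sum_\sigma\sigma$, summed over all top simplices $\sigma$ of $B$, is a cycle; as $B$ carries no $n$-simplices, $z$ is not a boundary, so $H_{n-1}(B;\Z_2)\ne 0$.

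The delicate point is item \eqref{homology-bry}: one needs that $B$ admits a triangulation (or an adequate \v{C}ech/combinatorial model) compatible with its MCS stratification and with dense top stratum, so that the fundamental-class argument goes through, and the fact that every codimension-one face carries exactly two top cells is exactly what the absence of an $(n-2)$-stratum and of a manifold boundary, established in item \eqref{item-bry-no-bry}, provides. The other point requiring care is the order of the induction: \eqref{item-mcs} and \eqref{item-bry-no-bry} must be run first in dimension $n$, then \eqref{homology-bry} in dimension $n$ (which uses only \eqref{item-bry-no-bry}), with \eqref{homology-space-dir-bry} in dimension $n$ being fed by \eqref{homology-bry} in dimension $n-1$.
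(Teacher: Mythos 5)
Your overall induction scheme matches the paper's (items \eqref{item-mcs} and \eqref{item-bry-no-bry} by induction via Corollary~\ref{local-top-bry}, then the homological statements from the vanishing of the codimension-one stratum), but two steps do not hold as written. The more serious one is your proof of item \eqref{homology-bry}: you "fix a triangulation of $B$ compatible with the stratification", and this is exactly what is not available. Neither Alexandrov spaces nor their boundaries, nor MCS spaces in general, are known to be triangulable (every closed topological manifold is an MCS space, and non-triangulable closed manifolds exist in dimension $\ge 4$), let alone triangulable compatibly with the canonical stratification; triangulability of Alexandrov spaces is a well-known open problem. So the pseudomanifold-cycle argument has no complex to run on. The paper avoids this entirely by quoting Lemma~1 of \cite{GrPe-sphere-thm}, which proves precisely the statement you need -- a compact MCS space with empty codimension-one strata has nontrivial top $\Z_2$-homology -- without any triangulation; deriving items \eqref{homology-space-dir-bry} and \eqref{homology-bry} from that lemma (as you do for \eqref{homology-space-dir-bry} from \eqref{homology-bry} one dimension down) is the intended route, and your induction bookkeeping for that part is fine.

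The second problem is in item \eqref{item-bry-no-bry}: from the pointed homeomorphism between the conical neighborhood $C(\partial\Sigma_pX)$ and $\R^{n-2}\times C(S)\cong C(S^{n-3}*S)$ you conclude "by uniqueness of conical neighborhoods" that $\partial\Sigma_pX\cong S^{n-3}*S$. Kwun's uniqueness says two conical neighborhoods of the same point are pointed homeomorphic; it does \emph{not} say that homeomorphic cones have homeomorphic links, and that implication is false in general -- this failure is exactly why the paper proves Theorem~\ref{thm-bry-top} by a careful double induction instead of "reading off the link". Your conclusion is nevertheless salvageable without identifying the link: a homeomorphism of MCS spaces preserves the canonical stratification, the $(n-2)$-stratum of $\R^{n-2}\times C(S)$ (with $|S|\neq 2$) contains the spine $\R^{n-2}\times\{o_S\}$, hence the $(n-2)$-stratum of $C(\partial\Sigma_pX)$ contains points other than the apex; away from the apex the $(n-2)$-stratum of the cone is $(0,\infty)\times(\partial\Sigma_pX)_{n-3}$, so $(\partial\Sigma_pX)_{n-3}\neq\varnothing$, contradicting the inductive hypothesis for $\Sigma_pX$. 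With that repair item \eqref{item-bry-no-bry} is fine; item \eqref{homology-bry}, however, genuinely needs the Grove--Petersen lemma (or an equivalent triangulation-free argument), not a simplicial fundamental class.
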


\begin{proof}

Parts ~\eqref{item-mcs} and \eqref{item-bry-no-bry} follow from Corollary \ref{local-top-bry} by induction on dimension.

Using the fact that $(\partial X)_{n-2}=\varnothing$  parts \eqref{homology-space-dir-bry} and \eqref{homology-bry} follow by   \cite[Lemma 1]{GrPe-sphere-thm} which shows that  a compact MCS space with empty codimension 1 strata has nontrivial top homology with $\Z_2$ coefficients.

\end{proof}

\begin{rem}
The statement that  $(\partial X)_{n-2}=\varnothing$ can be interpreted as saying that the boundary of the boundary is empty.

\end{rem}

We can now prove the following equivalent descriptions of boundary points. Recall that $\mathcal S^k(X)$ is defined as the set of points $p\in X$ such that $T_pX$ isometrically splits off $\R^k$ but not $\R^{k+1}$.

\begin{thm}\label{thm:boundary}
Let $X^n$ be an Alexandrov space and let $p \in X$.
Then the following are equivalent
\begin{enumerate}
\item \label{item-boundary} $p\in \partial X$;
\item\label{item-contractible} $\Sigma_p(X)$ is contractible;
\item \label{item-space-dir-homology} $\tilde H_{n-1}(\Sigma_pX,\Z_2)= 0$;
\item \label{item-local-homology} $H_n(X,X\setminus \{p\},\Z_2)= 0$;
\item \label{item-closure-top-strata}  $p\in \bar X_{n-1}$;
\item \label{item-closure-geom-strata} $p\in \overline{  \mathcal S^{n-1}(X)}$;

\end{enumerate}
\end{thm}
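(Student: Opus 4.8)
The plan is to establish the equivalence of the six characterizations by proving a cycle of implications, using the topological results collected above (especially Theorem~\ref{thm-bry-top}, Corollary~\ref{local-top-bry}, Proposition~\ref{prop-bry-top}) together with Perelman's stability theorem and standard facts about cones. First I would reduce most statements to the level of the space of directions: by the stability theorem a conical neighborhood of $p$ in $X$ is homeomorphic to $C(\Sigma_p X)$, so that $H_n(X, X\setminus\{p\};\Z_2) \cong \tilde H_{n-1}(\Sigma_p X;\Z_2)$ (reduced homology of the link, via the long exact sequence of the pair for a cone, since the cone is contractible and the punctured cone deformation retracts onto $\Sigma_p X$). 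This immediately gives the equivalence \eqref{item-space-dir-homology} $\Leftrightarrow$ \eqref{item-local-homology}. Likewise, if $\Sigma_p X$ is contractible then $\tilde H_{n-1}(\Sigma_p X;\Z_2)=0$, giving \eqref{item-contractible} $\Rightarrow$ \eqref{item-space-dir-homology}.

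Next I would handle the direction \eqref{item-boundary} $\Rightarrow$ \eqref{item-contractible}. If $p\in\partial X$ then by the Doubling Theorem~\ref{thm-doubling} the double $\hat X$ is Alexandrov of the same curvature bound, and $\Sigma_p\hat X$ is the double of $\Sigma_p X$ along $\partial\Sigma_p X$, which is again Alexandrov of $\curv\ge 1$; the reflection gives a deformation of $\Sigma_p X$ onto $\partial\Sigma_p X$, and iterating this observation down the dimension (induction on $n$) together with the fact that an Alexandrov space of $\curv\ge 1$ with boundary is contractible — which itself follows by induction since its space of directions has boundary — yields contractibility of $\Sigma_p X$. (Alternatively one can cite that a compact Alexandrov space of $\curv\ge 1$ with nonempty boundary has radius $\le\pik/2$ with the soul as center, hence is contractible by gradient flow of the distance to the soul.) For the converse direction back to \eqref{item-boundary}, I would argue contrapositively: if $p\notin\partial X$ then $\Sigma_p X$ has no boundary, so by Proposition~\ref{prop-bry-top}\eqref{homology-bry} applied inductively — or directly by the fact that a closed Alexandrov space of $\curv\ge1$ with empty boundary has $H_{n-1}(\Sigma_p X;\Z_2)\ne 0$ via \cite[Lemma~1]{GrPe-sphere-thm} (empty codimension-one strata) — we get $\tilde H_{n-1}(\Sigma_p X;\Z_2)\ne 0$, contradicting \eqref{item-space-dir-homology}. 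This closes the loop among \eqref{item-boundary}--\eqref{item-local-homology}.

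It remains to tie in the two stratification statements \eqref{item-closure-top-strata} and \eqref{item-closure-geom-strata}. The inclusion $\mathcal S^{n-1}(X)\subset X_{n-1}$ is clear since a point with $T_pX\cong \R^{n-1}\times C(\text{pt})=\R^{n-1}\times[0,\infty)=\R^n_+$ lies in the $(n-1)$-strata; conversely $X_{n-1}$ has full measure in $\partial X$ by \cite{BGP} and every point of $X_{n-1}$ is a limit of $(n-1,\eps)$-strained boundary points which lie in $\mathcal S^{n-1}$, giving $\overline{X_{n-1}} = \overline{\mathcal S^{n-1}(X)}$; so \eqref{item-closure-top-strata} $\Leftrightarrow$ \eqref{item-closure-geom-strata}. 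For \eqref{item-closure-top-strata} $\Rightarrow$ \eqref{item-boundary}: since $\partial X$ is closed (Corollary~\ref{thm-bry-distinguished}), it suffices that $X_{n-1}\subset\partial X$, which holds because a point of $\R^n_+$-type has space of directions a hemisphere, which has boundary. For the reverse \eqref{item-boundary} $\Rightarrow$ \eqref{item-closure-top-strata}, I would use Corollary~\ref{local-top-bry}: a neighborhood of $p\in\partial X$ in $\partial X$ is homeomorphic to $C(\Sigma_p\partial X)$, an $(n-1)$-dimensional MCS-space, whose top-dimensional (hence codimension-zero) stratum is dense; lifting a point $q$ of that stratum, $q$ has a Euclidean $(n-1)$-dimensional neighborhood inside $\partial X$ and, examining its space of directions in $X$ (which must then be a hemisphere by the inductive characterization), $q\in X_{n-1}$; since such $q$ can be taken arbitrarily close to $p$, we get $p\in\overline{X_{n-1}}$.

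The main obstacle I anticipate is the careful bookkeeping of the induction on dimension: several of these implications (contractibility of spaces of directions, the nontriviality of top $\Z_2$-homology for empty-boundary links, and the hemisphere characterization of $X_{n-1}$) are only meaningful once the equivalence is known one dimension down, so the entire theorem must be proved by a single simultaneous induction on $n$, with the base case $n=1$ (where $X$ is a $1$-manifold with boundary) checked by hand. Setting up this induction cleanly — making sure no implication secretly uses the $n$-dimensional version of a statement it is supposed to prove — is the delicate point; the individual homological computations are routine given the cone structure from the stability theorem.
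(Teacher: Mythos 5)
Your treatment of the homological block \eqref{item-boundary}--\eqref{item-local-homology} is essentially the paper's argument (blow-up/stability identification $H_n(X,X\setminus\{p\};\Z_2)\cong \tilde H_{n-1}(\Sigma_pX;\Z_2)$, nonvanishing of top $\Z_2$-homology of a boundaryless link via \cite[Lemma 1]{GrPe-sphere-thm}, contractibility of $\Sigma_pX$ when $\partial \Sigma_pX\ne\varnothing$), but note that your primary route to \eqref{item-boundary}$\Rightarrow$\eqref{item-contractible} does not work: the reflection of the double is an involution, not a deformation of $\Sigma_pX$ onto $\partial\Sigma_pX$, a retraction onto the boundary would not give contractibility anyway (the boundary can be a sphere), and ``contractible because its space of directions has boundary'' is purely local information. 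What does work is your parenthetical alternative, which is the paper's argument: $\dd(\cdot,\partial\Sigma_pX)$ is strictly concave on $\Sigma_pX$, so the soul is a point and the gradient flow of that concave function (not of the distance to the soul) contracts $\Sigma_pX$ to it.

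The genuine gaps are in \eqref{item-closure-top-strata} and \eqref{item-closure-geom-strata}. First, your inclusion $X_{n-1}\subset\overline{\mathcal S^{n-1}(X)}$ rests on the claim that $(n-1,\eps)$-strained boundary points lie in $\mathcal S^{n-1}(X)$; this is false, since $\eps$-straining gives only an almost splitting while $\mathcal S^{n-1}$ requires an isometric $\R^{n-1}$-factor of $T_qX$ (a $2$-dimensional cone point of angle close to $2\pi$ is arbitrarily well strained but splits nothing), and ``$X_{n-1}$ has full measure in $\partial X$'' would not produce the needed approximation in any case. The paper instead proves density of $\mathcal S^{n-1}(X)$ in $\partial X$ either from extremality of $\partial X$ via \cite{Fuj-uniform-bounds-extr}, or by the nearest-point trick: if $x$ is regular and $q\in\partial X$ is closest to $x$, then $q$ lies in the interior of the geodesic from $x$ to its mirror in the double, hence is regular in the double by \cite{Petr-parallel}, so $T_qX\cong\R^n_+$; density of regular points finishes it. Second, your step \eqref{item-boundary}$\Rightarrow$\eqref{item-closure-top-strata} asserts that a point $q$ in the top stratum of the MCS-space $\partial X$ has $\Sigma_qX$ ``a hemisphere by the inductive characterization'' and hence $q\in X_{n-1}$; no such metric characterization exists, and the claim is false: for $X=[0,\infty)\times C_\theta$ with $C_\theta$ a $2$-cone of angle $\theta<2\pi$, the point $q$ over the apex lies in $X_2$ and has a Euclidean neighborhood in $\partial X$, yet $\Sigma_qX$ is the spherical cone over a circle of length $\theta$, not a hemisphere; more to the point, local flatness of $\partial X$ at $q$ does not by itself control the stratum of $q$ in $X$. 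The paper's inductive argument avoids this: $p\in\partial X$ gives $\partial\Sigma_pX\ne\varnothing$, the inductive hypothesis gives $(\Sigma_pX)_{n-2}\ne\varnothing$, hence the apex of $C(\Sigma_pX)$ lies in the closure of its $(n-1)$-stratum, and stability transfers this to $p\in\bar X_{n-1}$. Relatedly, your justification of $X_{n-1}\subset\partial X$ (``a point of $\R^n_+$-type has space of directions a hemisphere'') again conflates the topological stratum with metric structure; the correct tool is Theorem~\ref{thm-bry-top}, or the paper's inductive proof that $(\Int X)_{n-1}=\varnothing$.
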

All results in this theorem are contained in the literature but we collect all the arguments in one place and give a proof that avoids circular reasoning.

\begin{proof}[Proof of Theorem \ref{thm:boundary}]
In what follows all homology is with coefficients in $\Z_2$.
The theorem is trivial for $n=1$. Therefore from now on we assume that $n\ge 2$.

First observe that by the stability theorem applied to the convergence $(X, \lambda\dd_X, p)\underset{\lambda\to\infty}{\longrightarrow} (T_pX, \dd_{T_p X},o_p)$  we have that $H_n(X,X\setminus \{p\})\cong H_n(T_pX, T_pX\setminus\{o_p\})\cong H_{n-1}(\Sigma_p(X))$.

Let $\Int X =X\setminus \partial X$ be the interior of $X$. By induction on dimension using the stability theorem in the above convergence it follows that $(\Int X)_{n-1}=\varnothing$. Note that this means that $X_{n-1}\subset \partial X$.

Again using  \cite[Lemma 1]{GrPe-sphere-thm} this implies that for any interior point $p\in \Int X$ it holds that $H_n(X,X\setminus \{p\})\cong H_{n-1}(\Sigma_pX)\ne  0$.

If $p\in \partial X$ then $\Sigma_p$ is a space of $\curv\ge 1$ with boundary. Since the distance function to the boundary on $\Sigma_pX$ is strictly concave its soul must be a point and hence $\Sigma_pX$ is contractible and  $H_n(X,X\setminus \{p\})\cong H_{n-1}(\Sigma_p)= 0$.

This establishes $\eqref{item-boundary} \Leftrightarrow \eqref{item-contractible}\Leftrightarrow  \eqref{item-space-dir-homology} \Leftrightarrow  \eqref{item-local-homology} $.

Let us establish  $\eqref{item-boundary} \Leftrightarrow \eqref{item-closure-top-strata}  $.
We prove this by induction on $n$. The base case $n=1$ is clear.  Let $n>1$ and suppose the equivalence $\eqref{item-boundary} \Leftrightarrow \eqref{item-closure-top-strata}  $ has already been established for Alexandrov spaces of dimension less than  $n$. 
We have already established that $X_{n-1}\subset \partial X$. Since $\partial X$ is closed this implies that 
 $ \bar X_{n-1}\subset \partial X$.

Conversely, if $p\in\partial X$ then $\partial \Sigma_p\ne\varnothing$ and by the induction assumption $\Sigma_p$ has nontrivial $n-2$ strata.  Therefore the vertex $o$ lies in the closure of the $n-1$ strata of $C(X)$ and by the stability theorem $p\in \bar X_{n-1}$.

This proves that $\partial X=\bar X_{n-1}$ which establishes  $\eqref{item-boundary} \Leftrightarrow \eqref{item-closure-top-strata} $.

Let us establish   $\eqref{item-closure-geom-strata} \Leftrightarrow \eqref{item-boundary}  $.
By the splitting theorem it's obvious that if $p\in \mathcal S^{n-1}$ then $T_pX\cong \R^n_+$ and hence $p\in \partial X$. Since $\partial X$ is closed  it follows that $\overline{  \mathcal S^{n-1}(X)}\subset \partial X$.
The inverse inclusion  follows from ~\cite{Fuj-uniform-bounds-extr} since $\partial X$ is an extremal subset of $X$. Alternatively it follows from the fact that if $x\in X$ is regular and $q\in \partial X$ satisfies $\dd(x,q)=\dd(x,\partial X)$ then $q\in \mathcal S^{n-1}$. This holds for example because in the double of $X$ the point $q$ lies on the interior of a geodesic connecting $x$ with its mirror and hence $q$ must be a regular point in the double by \cite{Petr-parallel}.

\end{proof}

By the Stability Theorem part \eqref{item-contractible} in Theorem \ref{thm:boundary} implies that the punctured conical neighborhood of a point in $\partial X$ is contractible. In a general $n$-dimensional $MCS$-space $X$ the conical neighborhood of a point in $X_{n-1}$ is homeomorphic to to $\R^{n-1}\times C(K)$ where $K$ is a finite set of points and $|K|\ne 2$.  If $X^n$ is an $n$-dimensional Alexandrov space then $X_{n-1}\subset \partial X$ and hence the above shows that  $|K|$ can only be equal to $1$ since if $n\ge 2$  and $|K|\ge 3$ the punctured conical neighborhood of the apex in $\R^{n-1}\times C(K)$ is not contractible.
In particular this means that $X^n$ cannot contain a point whose conical neighborhood is homeomorphic to $\R^{n-1}\times C(K)$ where $K$ is a finite set with $|K|\ge 3$.

Thus we get

\begin{cor}\label{cor-bry-conical}
Let $X$ be a $n$-dimensional Alexandrov space and $p\in X_{n-1}$. Then conical neighborhood of $p$ is homeomorphic to $\R^n_+$.
\end{cor}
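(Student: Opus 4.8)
The plan is to make precise the argument sketched just above, combining the structure theory of MCS-spaces with the characterization of boundary points in Theorem~\ref{thm:boundary} and the Stability Theorem. No new geometry is needed; the content is purely topological.

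First I would unwind the definition of the topological stratification. A point $p\in X_{n-1}$ has, by definition, a conical neighborhood pointed homeomorphic to $\R^{n-1}\times C(S)$ for a compact MCS-space $S$, and by maximality of the index $n-1$ the cone $C(S)$ cannot split off another Euclidean factor. Since $X$ is $n$-dimensional and $\R^{n-1}\times C(S)$ is an open subset of it, $C(S)$ must be one-dimensional, so $S$ is a finite set $K$ with $k:=|K|\ge 1$ points; moreover $k\ne 2$, since $C(K)\cong\R$ when $k=2$ and then the neighborhood would be $\R^n$, forcing $p\in X_n$. Hence either $k=1$, in which case $\R^{n-1}\times C(K)\cong\R^{n-1}\times[0,\infty)\cong\R^n_+$ and we are done, or $k\ge 3$, which I claim is impossible.

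To rule out $k\ge 3$ I would argue as follows. Since $X_{n-1}\subset\partial X$ (established in the proof of Theorem~\ref{thm:boundary}), we have $p\in\partial X$, so by Theorem~\ref{thm:boundary}\eqref{item-contractible} the space of directions $\Sigma_pX$ is contractible. By the Stability Theorem a punctured conical neighborhood of $p$ is homeomorphic to $C(\Sigma_pX)\setminus\{o\}$, hence homotopy equivalent to $\Sigma_pX$, hence contractible. On the other hand, in the model $\R^{n-1}\times C(K)$ a punctured conical neighborhood of the apex $(0,o)$ deformation retracts onto the link of that point, which is the join $S^{n-2}\ast K$ of the link $S^{n-2}$ of $0\in\R^{n-1}$ with the link $K$ of $o\in C(K)$; a standard join computation shows that $S^{n-2}\ast K$ is homotopy equivalent to a wedge of $k-1$ copies of $S^{n-1}$, so that $\widetilde H_{n-1}(S^{n-2}\ast K;\Z_2)\cong\Z_2^{\,k-1}$. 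For $k\ge 2$ this is nonzero, contradicting contractibility. Therefore $k=1$, which proves the corollary.

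The argument is short and avoids circular reasoning as long as Theorem~\ref{thm:boundary} and the inclusion $X_{n-1}\subset\partial X$ are already available, which they are. The only mildly delicate point --- the main ``obstacle'', such as it is --- is the homotopy-theoretic bookkeeping: identifying the punctured neighborhood of a product-stratum point with the join of the two links and carrying out the elementary join computation, together with checking the degenerate range $n=1$, where $S^{n-2}=S^{-1}=\varnothing$, the join $S^{-1}\ast K$ is just $K$ itself, and the conclusion persists since $K$ is contractible precisely when $k=1$.
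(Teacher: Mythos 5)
Your proposal is correct and follows essentially the same route as the paper: use $X_{n-1}\subset\partial X$ together with the contractibility of $\Sigma_pX$ for boundary points (Theorem~\ref{thm:boundary} plus the Stability Theorem) to conclude that the punctured conical neighborhood $\R^{n-1}\times C(K)\setminus\{p\}$ must be contractible, which forces $|K|=1$ and hence the neighborhood is $\R^n_+$. The only difference is that you spell out the non-contractibility for $|K|\ge 2$ via the join computation $S^{n-2}\ast K\simeq \bigvee_{k-1}S^{n-1}$, which the paper simply asserts.
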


For an $n$-dimensional Alexandrov space $X^n$ without boundary Grove and Petersen proved in~\cite{GrPe-sphere-thm} that the top topological strata is connected and furthermore if $X$ is compact then $H_n(X,\Z_2)\cong \Z_2$.  It is natural to ask if a similar result holds for connected components of the boundary.  Grove and Petersen's proof uses that the set of regular points in an Alexandrov space is convex. However this is not known for connected components of the boundary. It would follow from the conjecture that the boundary is also an Alexandrov space  but that conjecture remains wide open.

Nevertheless we show that the result of Grove and Petersen does hold for the boundaries of Alexandrov spaces. This sharpens  Proposition ~\ref{prop-bry-top}.

\begin{prop}\label{prop-bry-top-2}
Let $X^n$ be an $n$-dimensional Alexandrov space with $n\ge 2$.
Then for every connected component  $Y$ of $\partial X$ it holds that $Y_{n-1}$ is connected and if $Y$ is compact then $H_{n-1}(Y)\cong \Z_2$.
\end{prop}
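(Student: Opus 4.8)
The plan is to prove both assertions by induction on $n=\dim X$, the real content being connectedness of $Y_{n-1}$; the homology statement will then follow from a Mayer--Vietoris computation in the spirit of \cite[Lemma 1]{GrPe-sphere-thm}. All homology is with $\Z_2$-coefficients. By Proposition~\ref{prop-bry-top}, $Y$ is a connected $(n-1)$-dimensional MCS-space with $Y_{n-2}=\varnothing$; hence $Y_{n-1}$ is open and dense in $Y$ and $Y\setminus Y_{n-1}=\bigcup_{k\le n-3}Y_k$ has topological dimension at most $n-3$.

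The first ingredient is an auxiliary fact used to control links: \emph{if $\Sigma$ is an Alexandrov space of $\curv\ge 1$ with $\dim\Sigma\ge 2$ and $\partial\Sigma\ne\varnothing$, then $\partial\Sigma$ is connected}. Indeed, $f=\dd(\cdot,\partial\Sigma)$ is strictly concave on $\Sigma$ (here $\curv\ge 1$ is essential), so it has a unique maximum point $s$, and the gradient flow of $-f$ gives a deformation retraction of $\Sigma\setminus\{s\}$ onto $\partial\Sigma$, every point reaching $\partial\Sigma$ in finite time (see \cite{petrun-semiconcave}). Since $\dim\Sigma\ge 2$, the space of directions $\Sigma_s\Sigma$ is a connected Alexandrov space of dimension $\ge 1$, so $\Sigma\setminus\{s\}$ is connected, hence so is its image $\partial\Sigma$ under the retraction.

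Now the induction. If $n=2$ then $\partial X$ is a $1$-dimensional MCS-space with empty $0$-stratum, that is, a disjoint union of circles, so $Y=\SS^1=Y_{n-1}$ and $H_1(Y)\cong\Z_2$. Assume $n\ge 3$ and that Proposition~\ref{prop-bry-top-2} holds in dimensions less than $n$. Fix $p\in\partial X$. By Corollary~\ref{local-top-bry}, a neighbourhood of $p$ in $\partial X$ is pointed homeomorphic to $C(\partial\Sigma_pX)$, with $\partial\Sigma_pX$ an $(n-2)$-dimensional MCS-space; since $\dim\Sigma_pX=n-1\ge 2$, the auxiliary fact shows $\partial\Sigma_pX$ is connected, and applying the inductive hypothesis to the $(n-1)$-dimensional Alexandrov space $\Sigma_pX$ (whose boundary is this one component) shows that the top stratum $(\partial\Sigma_pX)_{n-2}$ is connected too. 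Hence, if $p\in Y\setminus Y_{n-1}$ — so that the apex of $C(\partial\Sigma_pX)$ is not a manifold point — a small conical neighbourhood of $p$ meets $Y_{n-1}$ in $(0,\eps)\times(\partial\Sigma_pX)_{n-2}$, which is connected and nonempty. So every $p\in Y\setminus Y_{n-1}$ has a neighbourhood basis $\{V\}$ in $Y$ with $V\cap Y_{n-1}$ connected. A routine point-set argument now gives that $Y_{n-1}$ is connected: if $Y_{n-1}=A\sqcup B$ with $A,B$ nonempty and open, then density of $Y_{n-1}$ and connectedness of $Y$ produce $p\in\overline A\cap\overline B$; this $p$ cannot lie in $Y_{n-1}$ (the open set $A\ni p$ would miss $B$), so $p\in Y\setminus Y_{n-1}$, and a neighbourhood $V$ as above has $V\cap Y_{n-1}$ meeting both $A$ and $B$, so it splits into the nonempty relatively open sets $V\cap A$ and $V\cap B$, contradicting connectedness.

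Finally, suppose $Y$ is compact. Since $\dim(Y\setminus Y_{n-1})\le n-3$, the long exact sequence of $(Y,Y\setminus Y_{n-1})$ gives $H_{n-1}(Y)\cong H_{n-1}(Y,Y\setminus Y_{n-1})$, and the fundamental-class argument of \cite[Lemma 1]{GrPe-sphere-thm}, now with a \emph{single} connected component of the top stratum $Y_{n-1}$, identifies this group with $\Z_2$; nonvanishing is already contained in Proposition~\ref{prop-bry-top}. I expect the two delicate points to be the justification that the flow of $-\dd(\cdot,\partial\Sigma)$ continuously retracts $\Sigma\setminus\{s\}$ onto $\partial\Sigma$ in finite time, and making the $\Z_2$-homology bookkeeping for MCS-spaces precise — both routine given the existing theory of semiconcave functions and of MCS-spaces, but requiring care.
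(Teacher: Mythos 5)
Your overall route coincides with the paper's: the same induction on $n$, the same use of the conical structure of $\partial X$ at each point (Corollary~\ref{local-top-bry}), the same reduction of the homology claim to \cite[Lemma 1]{GrPe-sphere-thm} via $Y_{n-2}=\varnothing$, and the same auxiliary lemma that an Alexandrov space $\Sigma$ of $\curv\ge 1$, $\dim\Sigma\ge 2$, with nonempty boundary has connected boundary (this is Lemma~\ref{lem-bry-pos-connected} in the paper). The point-set chaining argument you spell out is exactly what the paper leaves as ``trivially follows from connectedness of $Y$,'' and your remark that one should only claim $U\cap Y_{n-1}\supseteq(0,\eps)\times(\partial\Sigma_pX)_{n-2}$ plus density, rather than equality, is harmless.

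The genuine gap is in your proof of the auxiliary connectedness fact. You propose to retract $\Sigma\setminus\{s\}$ onto $\partial\Sigma$ by ``the gradient flow of $-f$'' where $f=\dd(\cdot,\partial\Sigma)$. But $-f$ is semi\emph{convex}, not semiconcave, and the Petrunin--Perelman gradient-flow machinery only produces flows of semiconcave functions, which move \emph{uphill}. The downhill (steepest-descent) flow of a concave function on an Alexandrov space is not a well-defined object: descent directions need not be unique, descending curves can branch, and they do not depend continuously on the initial point, so no deformation retraction (nor even a continuous surjection from $\Sigma\setminus\{s\}$ onto $\partial\Sigma$) comes for free from ``the existing theory of semiconcave functions.'' Note also that contractibility or connectedness of $\Sigma\setminus\{s\}$ alone cannot suffice, since a cone over a disconnected space is connected and contractible; some genuine structure theorem linking $\Sigma$ (or $\Sigma_s\Sigma$) to $\partial\Sigma$ is needed. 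This is precisely what the paper invokes: by \cite[Theorem 1.1]{Petr-Per-grad-curves}, $\Sigma_s\Sigma$ is homeomorphic to $\partial\Sigma$ when $s$ is the soul, and then disconnectedness of $\partial\Sigma$ would force $T_s\Sigma$ to have two ends, hence split a line, making $\Sigma_s\Sigma$ a spherical suspension and therefore connected --- a contradiction. (Alternatively, the paper notes one may double $\Sigma$ along its boundary: if $\partial\Sigma$ were disconnected, contractibility of $\Sigma$ would force the double to have infinite $\pi_1$, contradicting $\curv\ge 1$.) If you replace your flow argument by either of these, the rest of your proof goes through and matches the paper's.
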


For the proof we need the following.

\begin{lem}\label{lem-bry-pos-connected}
Let $X^n$ be Alexandrov of $\curv\ge 1$ such that $n\ge 2$ and $\partial X\ne \varnothing$.

Then $\partial X$ is connected.
\end{lem}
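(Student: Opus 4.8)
The plan is to argue by contradiction using the Doubling Theorem. Suppose $\partial X$ is disconnected, say $\partial X=A_1\sqcup A_2$ with both $A_i$ nonempty; since $\partial X$ is closed (Corollary~\ref{thm-bry-distinguished}) and $X$ is compact, the $A_i$ are compact and $\ell:=\dd(A_1,A_2)>0$. By Theorem~\ref{thm-doubling} the double $\hat X:=X\cup_{\partial X}X$ has $\curv\ge 1$; let $\iota\co\hat X\to\hat X$ be the reflection, an isometry with $\mathrm{Fix}(\iota)=\partial X$. Recall also (this follows from Corollary~\ref{local-top-bry}) that for $p\in\partial X$ the space of directions $\Sigma_p\hat X$ is isometric to the double of $\Sigma_pX$ along $\Sigma_p\partial X$.

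Choose $p\in A_1$ and $q\in A_2$ with $\dd(p,q)=\ell$ and a shortest geodesic $\gamma$ from $p$ to $q$; since $\dd_{\hat X}$ agrees with $\dd_X$ on $\partial X$, $\gamma$ is also shortest in $\hat X$. As $\gamma$ realises the distance between the \emph{sets} $A_1$ and $A_2$, its interior avoids $\partial X$, so being connected it lies in one open copy, say $\Intr X_1$. Then $\iota\gamma$ is a second shortest geodesic from $p$ to $q$ (because $\iota$ fixes $p$ and $q$), with interior in $\Intr X_2$, so $\gamma\ne\iota\gamma$. By the first variation formula $\gamma$ leaves $\partial X$ orthogonally at $p$: every direction in $\Sigma_p\partial X$ makes angle $\ge\pi/2$ with $\gamma'(0)$, hence $\dd_{\Sigma_pX}(\gamma'(0),\Sigma_p\partial X)\ge\pi/2$; passing to the double, $\dd_{\Sigma_p\hat X}\!\big(\gamma'(0),(\iota\gamma)'(0)\big)=2\,\dd_{\Sigma_pX}(\gamma'(0),\Sigma_p\partial X)\ge\pi$, and since $\Sigma_p\hat X$ has $\curv\ge 1$ this must be exactly $\pi$. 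Thus the angle between $\gamma$ and $\iota\gamma$ is $\pi$ at $p$, and likewise at $q$, so $c:=\gamma*(\iota\gamma)^{-1}$ is a \emph{closed geodesic} in $\hat X$ of length $2\ell$, made of two shortest arcs between $p$ and $q$, invariant under $\iota$ (which reflects $c$, fixing precisely $p$ and $q$).

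It remains to derive a contradiction, and I would do this by first proving $\ell=\pi$. Assuming $\ell=\pi$: then $\dd(p,q)=\pi=\diam\hat X$, so Toponogov's maximal diameter rigidity gives $\hat X\cong\mathrm{Susp}(Z)$ with $p,q$ the suspension points; since $\iota$ fixes $p,q$ it equals $\mathrm{Susp}(\iota_Z)$ for an isometric involution $\iota_Z$ of $Z$, so $X\cong\mathrm{Susp}(W)$, where $W$ is the $(n-1)$–dimensional Alexandrov space of $\curv\ge 1$ with $\partial W=\mathrm{Fix}(\iota_Z)$ whose double is $Z$, and $\partial X\cong\mathrm{Susp}(\partial W)$. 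Since $\partial X\ne\varnothing$ we have $\partial W\ne\varnothing$, so by induction on $n$ — the base case $n=2$ having $W$ a segment and $\partial X\cong\mathrm{Susp}(S^0)\cong S^1$ — $\partial W$ is connected, hence so is $\partial X$, contradicting $\partial X=A_1\sqcup A_2$.

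The crux, and the step I expect to be hardest, is therefore $\ell=\pi$. One has the cheap bound $\ell\le\pi$ from $\diam\hat X\le\pi$; to rule out $\ell<\pi$ one should exploit the rigidity of the $\iota$–invariant, doubly shortest closed geodesic $c$. For instance, the midpoint $z$ of $\gamma$ satisfies $\dd_X(z,\partial X)=\ell/2$ — any boundary point closer to $z$ than $\ell/2$ would, together with $p$ or $q$, shorten $\dd(A_1,A_2)$ — so $z$ and its mirror $\iota z$ lie at distance exactly $\ell$ and are joined by the two $c$–arcs; propagating this ``antipodal pairs on $c$ are $\tfrac12\,\mathrm{length}(c)$ apart'' type of rigidity along $c$, together with Toponogov, should force $\ell=\pi$. (An alternative route to the lemma that avoids this point is to apply Grove--Petersen's connectedness of the top stratum to $\hat X$ and transfer it to the dense stratum $(\partial X)_{n-1}$ via the $\iota$–quotient, but the transfer step is the analogue of the same difficulty.)
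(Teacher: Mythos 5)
Your argument is incomplete at exactly the step you flag as the crux: you never prove $\ell=\pi$, and the configuration you do establish does not force it. What you have after the reflection argument is a closed geodesic $c$ of length $2\ell\le 2\pi$ in the double $\hat X$ ($\curv\ge 1$), consisting of two shortest arcs between $p$ and $q$ meeting at angle $\pi$ at both ends, together with the ``antipodal pairs on $c$ are at distance $\ell$'' observation for the midpoints. As a purely metric statement about $\curv\ge 1$ spaces this configuration is compatible with $\ell<\pi$: in a lens space $\SS^3/\Z_k$ there are pairs of points at distance $\pi/k$ joined by two distinct shortest arcs forming a closed geodesic of length $2\pi/k$, with angle $\pi$ at both endpoints and the same antipodal-pair property along the geodesic. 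So any proof of $\ell=\pi$ must genuinely use the reflection $\iota$ and the boundary structure beyond what your sketch records, and no such argument is given; ``propagating the rigidity along $c$ together with Toponogov'' is a hope, not a proof. (The rest of your plan is fine but partly superfluous: once $\ell=\pi$, maximal diameter rigidity gives $\hat X\cong\mathrm{Susp}(Z)$ and $\partial X=\mathrm{Fix}(\iota)$ is a suspension of a nonempty set, hence connected without any induction.)

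For comparison, the paper's proof avoids this issue entirely and is much shorter. One version: let $p$ be the soul of $X$ (the maximum point of $\dd(\cdot,\partial X)$); by Perelman--Petrunin, $\partial X$ is homeomorphic to $\Sigma_pX$, so if $\partial X$ were disconnected then $\Sigma_pX$ would be disconnected, forcing $T_pX$ to contain a line and split isometrically, which for $n\ge 2$ makes $\Sigma_pX$ a spherical suspension and hence connected --- a contradiction. The alternative version does use the double, but via topology rather than geodesic rigidity: $X$ is contractible, so if $\partial X$ had several components the double would have infinite fundamental group, contradicting $\curv\ge 1$. If you want to salvage your approach, the second of these is the natural replacement for your unproven $\ell=\pi$ step.
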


\begin{proof}

Suppose $\partial X$ is not connected. Let $p$ be the soul of $\Sigma$.  By  \cite{Petr-Per-grad-curves}[Theorem 1.1]   we have that $\Sigma_p X$ is homeomorphic to $\partial X$. Therefore $\Sigma_pX$ is not connected. Hence $T_pX$ has more than one end and contains a line. Therefore it splits a line isometrically by the splitting theorem and since $n\ge 2$ this implies that $\Sigma_pX$ is a spherical suspension over a nonempty set and hence is connected.

An alternative proof can be given by looking at the double $Y$ of $X$ along the boundary of $X$. If $\partial X$ has more than one component then it is easy to see using that $X$ is contractible that $Y$ has infinite $\pi_1$. But $Y$ is Alexandrov of $\curv\ge 1$ and hence must have finite $\pi_1$.
\end{proof}
\begin{proof}[Proof of Proposition~\ref{prop-bry-top-2}]

It is enough to prove that $Y_{n-1}$ is connected. Then for compact $Y$  the isomorphism $H_{n-1}(Y)\cong \Z_2$ follows by  \cite[Lemma 1]{GrPe-sphere-thm} using that $Y_{n-2}=\varnothing$.

We proceed by induction. The base of induction $n=2$ is straightforward. 

Induction step. Let $p\in Y$. By Corollary~\ref{local-top-bry} a conical neighborhood $U$ of $p$ in $Y$ is homeomorphic  to $C(\partial \Sigma_pX)$.

By Lemma \ref {lem-bry-pos-connected} $\partial \Sigma_pX$ is connected and by the induction assumption its top strata is connected. Therefore the same holds for $U$, that is, $U_{n-1}$ is connected. Now the result trivially follows from connectedness of $Y$.
\end{proof}

\bibliographystyle{alpha}
\bibliography{biblio}

\end{document}